\title{On the B-Semiampleness Conjecture}
\author{Enrica Floris and Vladimir Lazi\'c}
\institution{Universit\'e de Poitiers, Laboratoire de Math\'ematiques et Applications, UMR~CNRS 7348, T\'el\'eport 2, Boulevard Marie et Pierre Curie, BP 30179, 86962 Futuroscope Chasseneuil Cedex, France}\\
\email{enrica.floris@univ-poitiers.fr}} \\
\institution{Fachrichtung Mathematik, Campus, Geb\"aude E2.4, Universit\"at des Saarlandes, 66123 Saarbr\"ucken, Germany}\\
\email{lazic@math.uni-sb.de}}
\date{\vspace{-5ex}} % Empty date or tweak it according to your needs
\journal{\'Epijournal de G\'eom\'etrie Alg\'ebrique} % Epijournal name
\newdimen\origiwspc
\font% original inter word space
\newtheorem{thm}{Theorem}[section]
\newtheorem{pro}[thm]{Proposition}
\newtheorem{lem}[thm]{Lemma}
\newtheorem{con}[thm]{Conjecture}
\newtheorem{thmA}{Theorem}
\newtheorem{corA}[thmA]{Corollary}
\newtheorem{dfn}[thm]{Definition}
\newtheorem{rem}[thm]{Remark}
\newcommand{\N}{\mathbb{N}}
\newcommand{\C}{\mathbb{C}}
\newcommand{\R}{\mathbb{R}}
\newcommand{\Q}{\mathbb{Q}}
\newcommand{\OO}{\mathcal{O}}
\newcommand{\red}{\mathrm{red}}
\DeclareMathOperator{\rk}{rk}
\DeclareMathOperator{\codim}{codim}
\DeclareMathOperator{\Bs}{Bs}
\DeclareMathOperator{\Exc}{Exc}
\DeclareMathOperator{\mult}{mult}
\DeclareMathOperator{\Supp}{Supp}
\DeclareMathOperator{\ddiv}{div}
\DeclareMathOperator{\sB}{\mathbf{B}}
\begin{document}

%%%%%%%%%%%%%%%%%%%%%%%%%%%%%%%
% Add the title to the document
%%%%%%%%%%%%%%%%%%%%%%%%%%%%%%%

\maketitle

%\contribution{}

%%%%%%%%%%%%%%%%%%%%%
% Dedication (if any)
%%%%%%%%%%%%%%%%%%%%%
%\dedication{}

%%%%%%%%%%%%%%%%%%%%%%%%%%%%%%%%%%%%%%%%%%%%%%%%%%%%%%%%%%
% Add abstract, Keywords, MSC classification (recommended)
% Never remove prelims section, make it rather empty
%%%%%%%%%%%%%%%%%%%%%%%%%%%%%%%%%%%%%%%%%%%%%%%%%%%%%%%%%%
\begin{prelims}

\vspace{-0.55cm}

\def\abstractname{Abstract}
\abstract{The B-Semiampleness Conjecture of Prokhorov and Sho\-ku\-rov predicts that the moduli part in a canonical bundle formula is semiample on a birational modification. We prove that the restriction of the moduli part to any sufficiently high divisorial valuation is semiample, assuming the conjecture in lower dimensions.}

\keywords{Canonical bundle formula, moduli divisor}

\MSCclass{14H10; 14E30; 14N30}

%\vspace{0.15cm}

\languagesection{Fran\c{c}ais}{%

\vspace{-0.05cm}
{\bf Titre. Sur la conjecture de B-semi-amplitude} \commentskip {\bf R\'esum\'e.} La conjecture de B-semi-amplitude de Prokhorov et Shokurov pr\'edit que la partie modulaire de la formule du fibr\'e canonique doit \^etre semi-ample sur une modification birationnelle. En supposant la validit\'e de cette conjecture en dimensions inf\'erieures, nous montrons que la partie modulaire est semi-ample en restriction \`a toute valuation divisorielle suffisamment haute.}

\end{prelims}

%%%%%%%%%%%%%%%%%%%%%
% Content begins here
%%%%%%%%%%%%%%%%%%%%%

\newpage

% Add table of contents (optional)
\setcounter{tocdepth}{1} \tableofcontents

\section{Introduction}

Let $(X, \Delta)$ be a complex projective pair with log canonical singularities and let $f\colon X\to Y$ be a morphism such that
\begin{equation}\label{eq:1}
K_X+\Delta\sim_\Q f^*D
\end{equation}
for some $\Q$-Cartier $\Q$-divisor $D$ on $Y$. We say that $f$ is an \emph{lc-trivial fibration}; see Section \ref{sec:cbf} below. A typical example is when $K_X+\Delta$ is semiample and $f$ is the associated Iitaka fibration; a plethora of similar situations occurs in algebraic geometry. It is a fundamental question whether there exists a log canonical structure $(Y,\Delta_Y)$ such that $D\sim_\Q K_Y+\Delta_Y$: in other words, whether the singularities of $X$ \emph{descend} to $Y$.

This is an important question for at least two reasons: first, an affirmative answer would show that log canonical singularities form a ``stable'' category, and second, it enables proofs by induction.

The affirmative answer to the question above is known in several important special cases: when $f$ is a fibration and $(X,\Delta)$ has klt singularities \cite{Amb05a}, when $f$ is generically finite \cite{FG12}, and when $Y$ is a curve  \cite[Theorem 0.1]{Amb04}.

With notation as in \eqref{eq:1}, it is known that
$$D\sim_\Q K_Y+B_Y+M_Y,$$
where $B_Y$ (the \emph{discriminant}) is closely related to the singularities of $f$, and the divisor $M_Y$ (the \emph{moduli divisor}) conjecturally carries information on the birational variation of the fibres of $f$. A study of formulas of this type -- of \emph{canonical bundle formulas} -- began with Kodaira's canonical bundle formula for elliptic surface fibrations.

Much is known about the birational behaviour of such formulas: In particular, it is known that, after passing to a certain birational model $Y'$ of $Y$, the divisor $M_{Y'}$ is nef and for any higher birational model $Y''\to Y'$ the induced $M_{Y''}$ on $Y''$ is the pullback of $M_{Y'}$ \cite{Kaw98,Amb04,Kol07}. We call such a variety $Y'$ an \emph{Ambro model} of $f$.

The following is a conjecture of Prokhorov and Shokurov \cite[Conjecture 7.13]{PS09}, and it is the most important open problem regarding canonical bundle formulas.

\paragraph{B-Semiampleness Conjecture.}
{\em Let $(X,\Delta)$ be a pair and let $f\colon (X,\Delta)\to Y$ be an lc-trivial fibration to an $n$-dimensional variety $Y$, where the divisor $\Delta$ is effective over the generic point of $Y$. If $Y$ is an Ambro model of $f$, then the moduli divisor $M_Y$ is semiample.}

\bigskip

A proof of this conjecture would give an affirmative answer to the question stated at the beginning of this paper. Note that when the singularities of $(X,\Delta)$ are only klt, it was sufficient to show a weaker version -- that the moduli part is \emph{nef and b-good}, as demonstrated by Ambro in \cite{Amb05a}.

The B-Semiampleness Conjecture is known when $Y$ is a curve \cite[Theorem 0.1]{Amb04}, when a general fibre of $f$ is a curve by the classical work of Kodaira and by \cite[Theorem 8.1]{PS09}, or when a general fibre of $f$ is a smooth non-rational surface \cite{Fuj03,Fil18}; see also \cite{BC16} showing the semiampleness of a ``nearby'' divisor. Often a crucial role in the proof is played by the existence of a moduli space for the fibres. Nothing has been previously known in general when $\dim Y\geq2$.

In the remainder of the paper, we say that the B-Semiampleness Conjecture holds in dimension $n$, if it holds (in the notation from formulation of the B-Semiampleness Conjecture) for all lc-trivial fibrations $f\colon(X,\Delta)\to Y$ with $\dim Y=n$.

\medskip

\noindent{\bf\large The content of the paper.}
The main result of this paper is that the moduli part of an lc-trivial fibration is semiample when restricted to any divisorial valuation over its Ambro model:

\begin{thmA}\label{thm:main}
Assume the B-Semiampleness Conjecture in dimension $n-1$.

Let $(X,\Delta)$ be a pair and let $f\colon (X,\Delta)\to Y$ be an lc-trivial fibration to an $n$-dimensional variety $Y$, where the divisor $\Delta$ is effective over the generic point of $Y$. Assume that $Y$ is an Ambro model for $f$.

Then for every birational model $\pi\colon Y'\to Y$ and for every prime divisor $T$ on $Y'$ with the normalisation $T^\nu$ and the induced morphism $\nu\colon T^\nu\to Y'$, the divisor $\nu^*\pi^*M_Y$ is semiample on $T^\nu$.
\end{thmA}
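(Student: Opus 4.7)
The plan is to reduce to the case where $T$ is a smooth prime divisor on a smooth birational model of $Y$, and then to produce a new lc-trivial fibration over $T$ whose moduli divisor equals the restriction of $\pi^*M_Y$ to $T$; the B-Semiampleness Conjecture in dimension $n-1$ then finishes the proof.

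First I would pass to a higher birational model. Since $Y$ is an Ambro model, $M_{Y'}=\pi^*M_Y$ for any birational $\pi\colon Y'\to Y$. By further log resolutions of $Y'$ and of the source, I can arrange that $Y'$ is smooth, the strict transform of $T$ is smooth (so that $T^\nu=T$), and that $\Supp(\Delta)\cup f^{-1}(T)$ is a simple normal crossings divisor. Semiampleness descends under birational morphisms of normal varieties via $R^0$-pushforward, so this reduces the question to proving semiampleness of $(\pi^*M_Y)|_T$ on a smooth $T$.

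Next I would construct an auxiliary lc-trivial fibration $f_S\colon(S,\Delta_S)\to T$ by adjunction. Pick a prime component $S$ of $f^{-1}(T)$ which dominates $T$; to make $S$ a plt center, perturb by running a relative MMP on $(X,\Delta+(1-\varepsilon)f^*T+\varepsilon S)$ over $Y$ for small $\varepsilon>0$, and replace $X$ by the resulting model. Adding $f^*T$ to both sides of $K_X+\Delta\sim_\Q f^*(K_Y+B_Y+M_Y)$ gives
$$K_X+\Delta+f^*T\sim_\Q f^*(K_Y+T+B_Y+M_Y).$$
Divisorial adjunction along $S$, combined with $K_T=(K_Y+T)|_T$ (since $Y$ is smooth at $T$), then produces a pair $(S,\Delta_S)$ with $\Delta_S$ effective over the generic point of $T$, together with a canonical bundle formula for $f_S$ whose moduli divisor is $(\pi^*M_Y)|_T$.

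Finally, applying the B-Semiampleness Conjecture in dimension $n-1$ to $f_S$ yields that $(\pi^*M_Y)|_T$ is semiample on $T$, which by the reduction step gives semiampleness of $\nu^*\pi^*M_Y$ on $T^\nu$. The main obstacle is showing that the moduli divisor of the restricted fibration $f_S$ is exactly the restriction of the ambient moduli divisor: this is a compatibility statement for moduli parts under adjunction, and it is precisely here that one must exercise care in the MMP and perturbation so that all extra singular contributions coming from the restriction are absorbed into the discriminant on $T$ rather than into the moduli part. A secondary subtlety is preserving effectivity of $\Delta_S$ over the generic point of $T$ throughout the MMP, which relies on $\Delta$ being effective over the generic point of $Y$ and on $S$ dominating $T$.
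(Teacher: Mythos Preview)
Your overall strategy---pass to a good birational model, restrict along a component over $T$, and apply the conjecture in dimension $n-1$---is the paper's strategy as well. But there are two genuine gaps which are not mere details, and which in fact constitute the bulk of the paper's work.

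First, the restriction $f|_S\colon S\to T$ need not have connected fibres, so you must take the Stein factorisation $S\xrightarrow{h}T'\xrightarrow{\tau}T$. The canonical bundle formulas for $f$ and for $h$ are then related only up to error terms: one obtains
\[
\tau^*(M_f|_T)\sim_\Q M_h+R'+E,
\]
where $R'$ comes from the ramification of $\tau$ outside a fixed simple normal crossings locus and $E$ from the part of the discriminant $B_h$ not absorbed by that locus (Proposition~\ref{pro:step1aa}(ii)). Your proposed MMP on $(X,\Delta+(1-\varepsilon)f^*T+\varepsilon S)$ is not designed to kill these terms; in the paper this is done instead by an MMP that contracts a carefully chosen $(\mu,\bar f)$-EWE divisor (so that $\Delta_{W,v}$ becomes reduced and $(\psi^*\Sigma_{f,T})_{\red}\leq\Delta_{W,v}$), followed by a further birational base change of $Y$ along the image of $\Supp(R'+E)$ to force $R'=E=0$ (Proposition~\ref{pro:step1ab}, Steps 1--3). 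None of this is visible in your outline, and your acknowledged ``main obstacle'' is precisely this computation.

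Second, and more seriously, even once $\tau^*(M_f|_T)\sim_\Q M_h$ is established, you cannot yet invoke the B-Semiampleness Conjecture: the conjecture requires the base to be an \emph{Ambro model} for the restricted fibration, and there is no reason a priori why $T'$ should be one for $h$. If you pass to an Ambro model $\widetilde T_0\to T'$ for $h$, the identity $M_h=\tau^*(M_f|_T)$ may fail to pull back to an equality, because $M_{\widetilde T_0}$ could differ from the pullback of $M_h$ by an exceptional divisor. The paper handles this by a second base change (Proposition~\ref{pro:step1ab}, Steps 4--7): one arranges that every $\nu$-exceptional divisor on $\widetilde T_0$ already appears on the new $T_1'$, re-derives the moduli identity there, and then uses the Negativity lemma to force the exceptional discrepancy $E_\nu$ to vanish, concluding that $T_0'$ was an Ambro model all along. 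This step is entirely missing from your proposal, and without it the final appeal to the conjecture in dimension $n-1$ is unjustified.
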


If the moduli part $M_Y$ of the fibration is big and we consider only components of the locus $\sB_+(M_Y)$ where it is not ample (the \emph{augmented base locus} of $M_Y$, see Definition \ref{dfn:baseloci}), we can relax the assumptions on the B-Semiampleness Conjecture:

\begin{thmA}\label{thm:bplus}
Assume the B-Semiampleness Conjecture in dimensions at most $n-2$.

Let $(X,\Delta)$ be a pair and let $f\colon (X,\Delta)\to Y$ be an lc-trivial fibration to an $n$-dimensional variety $Y$, where the divisor $\Delta$ is effective over the generic point of $Y$. Assume that $Y$ is an Ambro model for $f$ and that $M_Y$ is big.

Then for every birational model $\pi\colon Y'\to Y$ and for any divisorial component $T$ of $\sB_+(\pi^*M_Y)$ with the normalisation $T^\nu$ and the induced morphism $\nu\colon T^\nu\to Y'$, the divisor $\nu^*\pi^*M_Y$ is semiample on $T^\nu$.
\end{thmA}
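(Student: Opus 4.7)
The plan is to reduce Theorem~\ref{thm:bplus} to Theorem~\ref{thm:main} applied to a restricted lc-trivial fibration whose base has dimension $n-1$, and then use that the moduli divisor is not big on $T^\nu$ (a consequence of $T\subset\sB_+(\pi^*M_Y)$) to upgrade semiampleness on every divisorial valuation to semiampleness on $T^\nu$ itself.

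After suitable birational modifications of $X$ and $Y'$, I would restrict the given lc-trivial fibration along $T$. Adjunction for lc-trivial fibrations (as developed by Ambro, Fujino--Gongyo and Filipazzi) should produce, on a sufficiently high birational model $\tilde T$ of $T^\nu$ (taken to be an Ambro model for the restricted fibration), an lc-trivial fibration $f_T\colon(X_T,\Delta_T)\to\tilde T$ whose moduli divisor is $\Q$-linearly equivalent to the pullback of $\nu^*\pi^*M_Y$. The hypothesis that $\Delta$ is effective over the generic point of $Y$ transfers to the analogous hypothesis for $\Delta_T$ over the generic point of $\tilde T$.

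Since $\dim\tilde T=n-1$, Theorem~\ref{thm:main} applied to $f_T$ requires only the B-Semiampleness Conjecture in dimension $n-2$, which is within our hypotheses. This yields semiampleness of the moduli divisor of $f_T$ on every divisorial valuation of $\tilde T$, hence semiampleness of every pullback of $\nu^*\pi^*M_Y$ to such a divisor. Separately, because $\pi^*M_Y$ is nef and $T$ is a divisorial component of $\sB_+(\pi^*M_Y)$, Nakamaye's theorem identifies $\sB_+(\pi^*M_Y)$ with the null locus $\Null(\pi^*M_Y)$, so $\nu^*\pi^*M_Y$ fails to be big on $T^\nu$.

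The final step, and the main obstacle, is to combine these two facts into genuine semiampleness on $T^\nu$. My plan is to pass to an Iitaka-fibration model $g\colon\hat T\to Z$ of the pullback of $\nu^*\pi^*M_Y$, so that $\dim Z\le n-2$ and this pullback is $g$-numerically trivial. Choosing a prime divisor $E\subset\hat T$ which is horizontal for $g$ and maps birationally onto $Z$, the restriction to $E^\nu$ is semiample by the previous step; since $E\to Z$ is birational this descends to semiampleness on $Z$, which then pulls back through $g$ to semiampleness on $\hat T$ and hence on $T^\nu$. The technical heart is to ensure that the identification of moduli divisors survives every modification used along the way, and that the Iitaka fibration can be chosen to interact compatibly with the canonical bundle formula.
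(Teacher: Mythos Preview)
Your proposal has two genuine gaps.

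First, the ``adjunction for lc-trivial fibrations'' you invoke to produce $f_T\colon(X_T,\Delta_T)\to\tilde T$ with moduli divisor $\Q$-linearly equivalent to the pullback of $M_Y|_T$, \emph{and with $\tilde T$ an Ambro model for $f_T$}, is not available as a black box in the cited literature. This is precisely the content of Propositions~\ref{pro:step1aa} and~\ref{pro:step1ab}, which occupy most of the technical work in this paper. Equality of moduli parts under restriction is delicate (the naive comparison produces the extra terms $R'+E$ in Proposition~\ref{pro:step1aa}(ii)), and forcing $\tilde T$ to be an Ambro model requires the careful base-change argument in Proposition~\ref{pro:step1ab}.

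Second, and more seriously, your final step is circular. Passing to the Iitaka fibration $g\colon\hat T\to Z$ of a nef divisor $D$ does \emph{not} exhibit $D$ as a pullback from $Z$; knowing $D\sim_\Q g^*D_Z$ is essentially equivalent to the semiampleness you are trying to prove. Moreover, a prime divisor $E\subset\hat T$ has dimension $n-2$, whereas $\dim Z=\kappa(T^\nu,\nu^*\pi^*M_Y)$ can be any value in $\{0,\dots,n-2\}$; when $\dim Z<n-2$ no divisor on $\hat T$ can map birationally, or even generically finitely, onto $Z$. So the descent step fails both conceptually and dimensionally.

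The paper avoids this entirely. After Propositions~\ref{pro:step1aa} and~\ref{pro:step1ab} yield a klt-trivial fibration $h_W\colon(S_W,\Delta_{S_W})\to T'$ with $\Delta_{S_W}\geq0$ and $\tau^*(M_Y|_T)\sim_\Q M_{h_W}$, one applies Ambro's Theorem~\ref{ambro1} directly to $h_W$. This produces, via variation of Hodge structure, a diagram realising $M_{h_W}$ (after a generically finite pullback) as the pullback of a \emph{big} moduli divisor $M_{\widetilde h_W}$ on a variety $\widetilde T'$ with $\dim\widetilde T'=\kappa(T',M_{h_W})\leq n-2$ (the inequality using Lemma~\ref{lem:nakamaye}). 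The B-Semiampleness Conjecture in dimensions $\leq n-2$ then gives semiampleness of $M_{\widetilde h_W}$, which pulls back and descends via Lemma~\ref{geq2}. Theorem~\ref{thm:main} is not used at all in the proof of Theorem~\ref{thm:bplus}; the detour through it in your plan is unnecessary and the subsequent bootstrapping does not close.
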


Immediate corollaries are:

\begin{corA}\label{cor:surfaces}
Let $(X,\Delta)$ be a pair and let $f\colon (X,\Delta)\to Y$ be an lc-trivial fibration to a surface $Y$, where the divisor $\Delta$ is effective over the generic point of $Y$. Assume that $Y$ is an Ambro model for $f$.

Then for every divisor $T$ on $Y$ with the normalisation $T^\nu$ and the induced morphism $\nu\colon T^\nu\to Y$, the divisor $\nu^*M_Y$ is semiample.

If additionally $M_Y$ is big, then $\nu^*M_Y\sim_\Q0$ for every divisorial component $T$ of $\sB_+(M_Y)$ with the normalisation $T^\nu$ and the induced morphism $\nu\colon T^\nu\to Y$.
\end{corA}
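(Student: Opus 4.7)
Both statements will be immediate applications of Theorems \ref{thm:main} and \ref{thm:bplus} with $n=2$, combined with one observation about curves; the only genuinely non-formal step is an appeal to Nakamaye's theorem.

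For the first assertion, the plan is to apply Theorem \ref{thm:main} with $\pi=\id_Y$. Its hypothesis is the B-Semiampleness Conjecture in dimension $n-1=1$, which is already known by \cite[Theorem 0.1]{Amb04} as recalled in the introduction. The theorem then immediately gives semiampleness of $\nu^*M_Y$ on the normalisation of each prime component of $T$, hence on $T^\nu$ itself.

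For the second assertion, assume that $M_Y$ is big. Then Theorem \ref{thm:bplus} with $\pi=\id_Y$ only needs the B-Semiampleness Conjecture in dimensions at most $n-2=0$, which is vacuous; so for any divisorial component $T$ of $\sB_+(M_Y)$, the pullback $\nu^*M_Y$ is already semiample on the normalised curve $T^\nu$. To upgrade this to $\Q$-linear triviality, I would use Nakamaye's theorem: since $M_Y$ is nef (because $Y$ is an Ambro model) and big, $\sB_+(M_Y)$ coincides with the union of proper subvarieties $V\subset Y$ on which $M_Y|_V$ fails to be big. On the surface $Y$ this simply means that every divisorial component $T$ of $\sB_+(M_Y)$ is an irreducible curve with $M_Y\cdot T=0$, so $\nu^*M_Y$ has degree zero on the smooth projective curve $T^\nu$. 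A semiample divisor of degree zero on a smooth projective curve is torsion, and therefore $\nu^*M_Y\sim_\Q 0$.
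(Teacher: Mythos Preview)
Your argument is correct and matches the paper's one-line proof (``Immediate from Theorem~\ref{thm:main} and from \cite[Theorem 0.1]{Amb04}'') for the first assertion: Theorem~\ref{thm:main} with $n=2$ needs the B-Semiampleness Conjecture for curves, which is Ambro's theorem.

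For the second assertion the paper gives no further justification, and what you wrote is exactly the missing detail. Two small remarks. First, invoking Theorem~\ref{thm:bplus} is harmless but unnecessary: the first assertion already gives semiampleness of $\nu^*M_Y$ for \emph{every} prime divisor $T$, in particular for those lying in $\sB_+(M_Y)$. Second, the Nakamaye step should be phrased via the paper's Lemma~\ref{lem:nakamaye} (which is stated for manifolds): pass to a resolution $\pi\colon Y'\to Y$, use $\sB_+(\pi^*M_Y)=\pi^{-1}\big(\sB_+(M_Y)\big)\cup\Exc(\pi)$ from \cite{BBP13} so that the strict transform $T'$ lies in $\sB_+(\pi^*M_Y)$, and conclude from Lemma~\ref{lem:nakamaye} that $\pi^*M_Y\cdot T'=0$, hence $M_Y\cdot T=0$. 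Then, as you say, a semiample divisor of degree $0$ on the smooth curve $T^\nu$ is $\Q$-linearly trivial.
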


\begin{corA}\label{cor:threefolds}
Let $(X,\Delta)$ be a pair and let $f\colon (X,\Delta)\to Y$ be an lc-trivial fibration to a threefold $Y$, where the divisor $\Delta$ is effective over the generic point of $Y$. Assume that $Y$ is an Ambro model for $f$ and that $M_Y$ is big.

Then $\nu^*M_Y$ is semiample for every divisorial component $T$ of $\sB_+(M_Y)$ with the normalisation $T^\nu$ and the induced morphism $\nu\colon T^\nu\to Y$.
\end{corA}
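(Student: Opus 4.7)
The plan is to recognise the statement as a direct specialisation of Theorem~B to $n=3$ and to the trivial birational model $\pi = \id_Y$. Under this specialisation, the sole hypothesis to verify is the validity of the B-Semiampleness Conjecture in dimensions at most $n-2 = 1$. For $\dim Y = 0$ the conjecture is vacuous, and for $\dim Y = 1$ it is the classical theorem of Ambro (\cite[Theorem 0.1]{Amb04}) recalled in the introduction. Consequently Theorem~B applies unconditionally in our setting and yields that $\nu^*M_Y$ is semiample on $T^\nu$ for every divisorial component $T$ of $\sB_+(M_Y)$, which is exactly the assertion of the corollary.

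The key point worth highlighting is why one invokes Theorem~B rather than Theorem~A: the hypothesis that $M_Y$ is big permits working with $\sB_+(M_Y)$, and the sharper dimensional input ($n-2$ in place of $n-1$) is precisely what makes the conclusion unconditional in the threefold case. Invoking Theorem~A instead would require the B-Semiampleness Conjecture in dimension $2$, which is not presently available in the literature. There is therefore no essential obstacle to this corollary beyond Theorem~B itself; all the genuine content is contained in the latter, and Corollary~C is simply the numerical observation that the range $n-2 \le 1$ coincides with the range in which the conjecture has already been established.
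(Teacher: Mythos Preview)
Your proposal is correct and follows exactly the paper's own proof, which reads in full: ``Immediate from Theorem~\ref{thm:bplus} and from \cite[Theorem 0.1]{Amb04}.'' The additional remarks you make about why Theorem~B (rather than Theorem~A) is the right input are accurate and helpful, though note that in the paper's numbering this is Corollary~D, not Corollary~C.
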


\medskip

\noindent{\bf\large Sketch of the proof.} 
The proof of Theorem \ref{thm:main} is very technical, and the core of the arguments is contained in Section \ref{sec:5and6}. In Proposition \ref{pro:step1aa} we use the Minimal Model Program to achieve, roughly, the situation where over $T$ there exists a log canonical centre $S$ of $(X,\Delta)$ and an lc-trivial fibration $h\colon (S,\Delta_S)\to T$ whose moduli part is \emph{almost} $M_Y|_T$. This part of the proof is somewhat similar to and inspired by the proof in \cite{FG14}. An important difference is that we do not cut down to curves in order to use the semistable reduction, but we study in detail those divisors which are not contracted by the MMP.

To this end, we introduce \emph{acceptable} lc-trivial fibrations; roughly speaking, these fibrations might not come with a sub-boundary $\Delta$ which is effective on a general fibre of the lc-trivial fibration, but are obtained from such a fibration by blowing up. This suffices to ensure that, by the results of Nakayama \cite{Nak04}, the MMP that we run actually terminates.

Finally, a careful choice of a base change in Proposition \ref{pro:step1ab} and an analysis of the ramification loci of the finite part of the Stein factorisation of the map $f|_S$ allow to finish the proof.

We mention here that this all uses many foundational results of Kawamata and Ambro.

\medskip

\noindent{\bf\large A reduction result.}
As a by-product of the techniques employed in the proofs, in Section \ref{sec:reduction} we reduce the B-Semiampleness Conjecture to a much weaker Conjecture \ref{con:weakbsemi}.

\begin{thmA}\label{thm:reductiontoweaker}
Assume that the B-Semiampleness Conjecture holds for klt-trivial fibrations $f\colon (X,\Delta)\to Y$, where $(X,\Delta)$ is a log canonical pair, $\Delta\geq0$, the moduli divisor $M_Y$ is big and $\dim Y\leq n$.

Then the B-Semiampleness Conjecture holds in dimension $n$.
\end{thmA}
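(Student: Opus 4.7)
The plan is to reduce a general lc-trivial fibration $f\colon(X,\Delta)\to Y$ with $\dim Y=n$ to one satisfying the stronger hypotheses of the statement, via three successive reductions, combined with induction on $n$. We assume the B-Semiampleness Conjecture in dimensions $\leq n-1$; the base case $n=1$ is \cite[Theorem 0.1]{Amb04}.

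\textbf{Step 1 (reduce to $M_Y$ big).} On an Ambro model, $M_Y$ is nef. If it is not big, take a resolution of the Iitaka fibration $g\colon Y\to Z$ of $M_Y$, so that $M_Y\sim_\Q g^*L$ with $L$ big and nef on $Z$. Applying the canonical bundle formula to both $f$ and to $(Y,B_Y)\to Z$, the composition $g\circ f\colon X\to Z$ acquires an lc-trivial structure whose moduli divisor contains $L$ as a summand, hence is big, on a base of dimension $<n$. The inductive hypothesis gives semiampleness on $Z$, which then lifts to $M_Y$; so we may assume $M_Y$ is big.

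\textbf{Step 2 (reduce to $\Delta\geq 0$).} Since $\Delta$ is effective over the generic point of $Y$, its negative part is $f$-vertical. Following the MMP strategy developed in Section \ref{sec:5and6} for acceptable lc-trivial fibrations (in the spirit of Proposition \ref{pro:step1aa}), we run an MMP over $Y$ on a suitable modification of $(X,\Delta)$ to arrange $\Delta\geq 0$ globally while preserving the lc condition. Because $Y$ is an Ambro model, $M_Y$ is unchanged up to $\Q$-linear equivalence.

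\textbf{Step 3 (reduce from lc-trivial to klt-trivial).} If the generic fibre is lc but not klt, choose a minimal horizontal lc centre $S$ of $(X,\Delta)$ and apply adjunction (or Kawamata subadjunction, captured by the pair structure used in Proposition \ref{pro:step1ab}). This produces a klt-trivial fibration $h\colon (S,\Delta_S)\to T$, where $S\to T\to Y$ is the Stein factorisation of $f|_S$, with $\Delta_S\geq 0$ and moduli divisor equal to the pullback of $M_Y$ under the finite morphism $T\to Y$. The assumed case of the conjecture applies to $h$ (we have $\dim T = n$, the pullback of $M_Y$ is big on $T$ by Step 1, and $(S,\Delta_S)$ is lc with $\Delta_S\geq 0$), so the pullback is semiample on $T$; since $T\to Y$ is finite, semiampleness descends to give $M_Y$ semiample on $Y$.

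The main obstacle will be Step 2: one must control the MMP so that no contraction perturbs the generic fibre of $f$, and thereby $M_Y$. This is precisely the purpose of the notion of acceptable lc-trivial fibrations introduced in the paper, with termination guaranteed by Nakayama's results on pseudo-effective divisors cited in the introduction.
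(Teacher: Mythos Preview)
Your Step 1 contains a genuine gap that breaks the argument. You write that the Iitaka fibration $g\colon Y\to Z$ of $M_Y$ satisfies $M_Y\sim_\Q g^*L$ with $L$ big. But this is precisely what is not known: for a nef $\Q$-divisor $M_Y$, the Iitaka map is a priori only rational, and even after resolving it, $M_Y$ need not be the pullback of anything from $Z$ unless $M_Y$ is already semiample (or at least nef and abundant). So Step 1 assumes what you are trying to prove. The paper's substitute for this step is Ambro's \cite[Theorem 3.3]{Amb05a} (Theorem \ref{ambro1}), which produces a generically finite cover $W\to Y$ and a surjection $W\to Y'$ with $M_{Y'}$ big; but that result is only available for \emph{klt}-trivial fibrations with boundary effective over the base. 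Hence the reduction to big moduli must come \emph{after} the reduction to klt-trivial, not before, and the paper's proof indeed runs your Steps 2--3 first and only then invokes Theorem \ref{ambro1}.

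Your Step 3 also hides the main difficulty. The assertion that the moduli part of the induced fibration $h\colon(S,\Delta_S)\to T$ equals $\tau^*M_Y$ is exactly the technical core of the paper's argument; it is not a formal consequence of adjunction. One obtains a priori $\tau^*M_Y\sim_\Q M_h+(\text{error})$, and killing the error term requires running the MMP of Proposition \ref{pro:step1aa} to force the vertical part of the boundary to be reduced and supported on the pullback of a fixed simple normal crossings divisor, and then a separate argument (Steps 4--5 of the proof of Theorem \ref{KLTimplLC}) to show the residual divisor $E$ vanishes. Note also that Proposition \ref{pro:step1ab}, which you cite, concerns restriction to a fixed \emph{vertical} divisor $T\subseteq Y$, whereas here $S$ dominates $Y$; the situations are analogous but not identical. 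Finally, even after the MMP $\Delta_{S_W}$ is effective but $\Delta_S$ itself need not be, so one must keep track of the crepant model $(S_W,\Delta_{S_W})$ when applying Theorem \ref{ambro1}.
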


The result says that it suffices to prove the B-Semiampleness Conjecture for \emph{klt-trivial} fibrations $f\colon (X,\Delta)\to Y$, where $\Delta$ is effective and the moduli divisor of $f$ is big. The method of the proof is similar to that in Section \ref{sec:5and6}, together with a quick application of a result of Ambro \cite[Theorem 3.3]{Amb05a}. This improves on the discussion and results in \cite[Section 3]{Fuj15}.

Consequently, in Theorems \ref{thm:main} and \ref{thm:bplus} it suffices to assume Conjecture \ref{con:weakbsemi} instead of the B-Semiampleness Conjecture.

\section{Preliminaries}

We work over $\C$. We denote by $\equiv$, $\sim$ and $\sim_\Q$ the numerical, linear and $\Q$-linear equivalence of divisors respectively. For a decomposition of a Weil $\Q$-divisor $D=\sum d_i D_i$ into prime components and a real number $\alpha$, we denote
$$D_\red=\sum D_i,\quad D^{\geq\alpha}:=\sum_{d_i\geq\alpha}d_i D_i\quad\text{and}\quad D^{\leq\alpha}:=\sum_{d_i\leq\alpha}d_i D_i,$$
and similarly for $D^{>\alpha}$, $D^{<\alpha}$ and $D^{=\alpha}$. If $f\colon X\to Y$ is a proper surjective morphism between normal varieties and $D$ is a Weil $\R$-divisor on $X$, then $D_v$ and $D_h$ denote the vertical and the horizontal part of $D$ with respect to $f$; the relevant map will be clear from the context. For instance, the notation $D_v^{\geq0}$ denotes the non-negative part of the vertical part of $D$.

\subsection{Base loci}

We start with the following well-known result; we include the proof for the benefit of the reader.

\begin{lem}\label{geq2}
Let $f\colon X\rightarrow Y$ be a surjective morphism between normal projective varieties. Let $D$ be a Cartier divisor on $Y$. Then $D$ is semiample if and only if $f^*D$ is semiample.
\end{lem}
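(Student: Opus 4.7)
The plan is as follows. The ``if'' direction is immediate: if $|mD|$ is basepoint free on $Y$ for some $m>0$ and defines a morphism $\vphi\colon Y\to\PS^N$, then the sections in $|f^*(mD)|$ pulled back from $Y$ define the composition $\vphi\circ f$, which is basepoint free, so $f^*D$ is semiample.

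For the converse, I would factor $f$ through its Stein factorisation $f=\bar f\circ\alpha$, where $\alpha\colon X\to Z$ is a proper morphism with $\alpha_*\OO_X=\OO_Z$ (so $\alpha$ has connected fibres) and $\bar f\colon Z\to Y$ is finite surjective. The assertion then splits into two independent statements: semiampleness descends under $\alpha$, and semiampleness descends under $\bar f$. This decomposition is the natural one, since the two cases require rather different techniques.

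For the descent under $\alpha$, after replacing $D$ by a multiple I may assume that $|\alpha^*\bar f^*D|$ is basepoint free and defines $\vphi\colon X\to\PS^N$. For each fibre $F$ of $\alpha$ the line bundle $\vphi^*\OO(1)|_F$ is trivial because $F$ maps to a point in $Y$; as $F$ is connected and projective, $\vphi|_F$ must be constant. The rigidity lemma, whose hypotheses $\alpha_*\OO_X=\OO_Z$ and properness are built into the Stein factorisation, then shows that $\vphi=\psi\circ\alpha$ for some $\psi\colon Z\to\PS^N$, and a projection-formula argument using $\alpha_*\OO_X=\OO_Z$ identifies $\psi^*\OO(1)$ with $\OO_Z(\bar f^*D)$, so $\bar f^*D$ is basepoint free.

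For the descent under the finite morphism $\bar f$, I would pass to a Galois closure $g\colon Z'\to Y$ of $\bar f$ with group $G$, so that $Y=Z'/G$ and each fibre of $g$ is a single $G$-orbit. By the forward direction already proved, $g^*D$ is semiample, and after replacing $D$ by a multiple $|g^*D|$ is basepoint free on $Z'$. Given $y\in Y$ with $g^{-1}(y)=\{z'_1,\dots,z'_k\}$, I would pick $s\in H^0(Z',g^*D)$ with $s(z'_i)\neq0$ for every $i$, which is possible by basepoint freeness and a generic choice of linear combination of the $s_i$ with $s_i(z'_i)\neq0$. Its norm $t=\prod_{\sigma\in G}\sigma\cdot s\in H^0\bigl(Z',|G|\cdot g^*D\bigr)$ is $G$-invariant, and $t(z'_i)=\prod_\sigma s(\sigma^{-1}z'_i)\neq0$ because the points $\sigma^{-1}z'_i$ run through the $G$-orbit of $z'_i$. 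Hence $t$ descends to a section of $\OO_Y(|G|D)$ not vanishing at $y$, and this shows that $|G|D$ is basepoint free on $Y$.

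The main obstacle is the finite case: the subtle point is manufacturing a single $G$-invariant section of some power of $g^*D$ which is nonzero on an entire fibre, and the norm construction combined with transitivity of $G$ on fibres of $g$ is precisely what is needed. The connected-fibre step is more routine and amounts to an application of the rigidity lemma to the morphism defined by a basepoint-free pullback.
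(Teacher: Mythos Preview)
Your proof is correct, and like the paper you reduce via Stein factorisation to the connected-fibre case and the finite case; the difference lies in how each case is handled.

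For connected fibres the paper is more direct than your rigidity argument: from $\alpha_*\OO_X=\OO_Z$ and the projection formula one has $H^0(X,\alpha^*L)\simeq H^0(Z,L)$, so every section of $\alpha^*L$ is a pullback; given $z\in Z$ one picks any $x\in\alpha^{-1}(z)$ and a section of $\alpha^*L$ not vanishing at $x$, and the section of $L$ it comes from does not vanish at $z$. Your factorisation of the morphism $\vphi$ through $\alpha$ via rigidity reaches the same conclusion with a little more machinery.

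For the finite case the paper takes a different route and avoids the Galois closure entirely: it notes that $\bar f$ is flat over an open subset whose complement has codimension at least $2$ in both source and target (using normality), so $\bar f_*\bar f^*D=(\deg \bar f)D$ as Weil divisors; then for $y\in Y$ a general member $E\in|\bar f^*D|$ misses the finite set $\bar f^{-1}(y)$, and $\bar f_*E\in|(\deg\bar f)D|$ does not pass through $y$. Your norm construction after passing to a Galois closure is an equally standard alternative; it is more algebraic and sidesteps the flatness and cycle-pushforward input, at the cost of enlarging the cover. Neither approach dominates the other.
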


\begin{proof}
Since necessity is clear, it suffices to prove sufficiency. So assume that $f^*D$ is semiample, and we may assume that $f^*D$ is basepoint free. By considering the Stein factorisation of $f$, it suffices to consider separately the cases when either $f$ has connected fibres or $f$ is finite.

When $f$ has connected fibres, pick a closed point $y\in Y$ and a closed point $x\in f^{-1}(y)$. Since $f^*D$ is basepoint free, there exists $D_X\in|f^*D|$ such that $x\notin\Supp D_X$. As $H^0(X,f^*D)\simeq H^0(Y,D)$, there exists a divisor $D_Y\in|D|$ such that $D_X=f^*D_Y$. Then it is clear that $y\notin\Supp D_Y$, which shows that $D$ is basepoint free.

Now assume that $f$ is finite. If $X^\circ$ and $Y^\circ$ are the smooth loci of $X$ and $Y$, respectively, consider the open sets $U_Y:=Y^\circ\setminus f(X\setminus X^\circ)\subseteq Y$ and $U_X:=f^{-1}(U_Y)\subseteq X$. By normality of $X$ and $Y$, we have
$$\codim_X(X\setminus U_X)=\codim_Y(Y\setminus U_Y)\geq2,$$
and the map $f|_{U_X}\colon U_X\to U_Y$ is flat by \cite[Exercise III.9.3(a)]{Har77}. Then, by \cite[Example 1.7.4]{Ful98}, it yields $(f|_{U_X})_*(f|_{U_X})^*(D|_{U_Y})=(\deg f)(D|_{U_Y})$  hence
\begin{equation}\label{eq:pushpull}
f_*f^*D=(\deg f)D.
\end{equation}
Now, for any point $y\in Y$, a general element $E\in|f^*D|$ avoids the finite set $f^{-1}(y)$, hence $y\notin\Supp f_*E$. Since $f_*E\in |(\deg f)D|$ by \eqref{eq:pushpull}, this shows that $(\deg f)D$ is basepoint free.
\end{proof}

We recall the definition of stable and augmented base loci of $\Q$-divisors.

\begin{dfn}\label{dfn:baseloci}
{\rm Let $D$ be a $\Q$-Cartier $\Q$-divisor on a normal projective variety $X$. If $D$ is integral, the base locus of $D$ is denoted by $\Bs|D|$. The \emph{stable base locus of $D$} is
$$\sB(D)=\bigcap_{D\sim_\Q D'\geq0}\Supp D',$$
and the \emph{augmented base locus of $D$} is
$$\sB_+(D)=\bigcap_{n\in\N_{>0}} \textstyle\sB\big(D-\frac1nA\big),$$
where $A$ is an ample divisor on $X$; this definition does not depend on the choice of $A$. Clearly $\sB(D)\subseteq\sB_+(D)$, and both $\sB(D)$ and $\sB_+(D)$ are closed subsets of $X$. The set $\sB_+(D)$ is empty if and only if $D$ is ample, and is different from $X$ if and only if $D$ is big.}
\end{dfn}

We need the following lemma in the proof of Theorem \ref{thm:bplus}.

\begin{lem}\label{lem:nakamaye}
Let $X$ be a projective manifold and let $D$ be a nef and big $\Q$-divisor on $X$. Let $T\subseteq\sB_+(D)$ be a prime divisor. Then $\OO_T(D)$ is not big.
\end{lem}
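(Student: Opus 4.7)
The plan is to reduce the statement to an application of Nakamaye's theorem, which identifies the augmented base locus of a nef and big divisor on a projective manifold with the so-called \emph{null locus}. Recall the precise statement: for $D$ nef and big on a smooth projective $X$, one has $\sB_+(D)=\Null(D)$, where $\Null(D)$ is the union of all positive-dimensional subvarieties $V\subseteq X$ satisfying $\bigl(D^{\dim V}\cdot V\bigr)=0$. Equivalently, every irreducible component of $\sB_+(D)$ is a subvariety on which the top self-intersection of $D$ vanishes.

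First I would use the bigness of $D$ to note that $\sB_+(D)$ is a proper closed subset of $X$, so every irreducible component of $\sB_+(D)$ has codimension at least one. Since $T$ is a prime divisor contained in $\sB_+(D)$, it has codimension exactly one, and any irreducible component $W$ of $\sB_+(D)$ containing $T$ must have codimension at most one; thus $W=T$, i.e., $T$ is itself a codimension-one irreducible component of $\sB_+(D)$. Applying Nakamaye's theorem to this component gives $\bigl(D|_T\bigr)^{\dim T}=\bigl(D^{\dim T}\cdot T\bigr)=0$.

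Finally, since $D$ is nef on $X$, the restriction $D|_T$ is a nef $\Q$-divisor on the (possibly singular) projective variety $T$. The standard fact that a nef divisor $N$ on a projective variety $V$ is big if and only if $N^{\dim V}>0$ (which follows from an asymptotic Riemann--Roch argument, or from Siu's inequality) then forces $\OO_T(D)$ not to be big, which is the desired conclusion.

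I do not foresee any serious obstacle: the only subtlety is the argument in the second paragraph, where one has to rule out that $T$ is strictly contained in a higher-dimensional component of $\sB_+(D)$. This is however automatic from the dimension count once one knows $\sB_+(D)\subsetneq X$, so the entire proof reduces to quoting Nakamaye's theorem together with the basic nef-and-big characterisation.
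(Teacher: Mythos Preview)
Your argument is correct and rests on Nakamaye's theorem, exactly as the paper's does. The only point worth flagging is your ``Equivalently'': Nakamaye's theorem says that every point of $\sB_+(D)$ lies on \emph{some} positive-dimensional $D$-null subvariety, but these subvarieties could a priori be strictly smaller than the irreducible component $T$ containing the point, so the passage to ``$D^{\dim T}\cdot T=0$'' is not automatic. The short justification needed here is precisely the paper's proof: assuming $\OO_T(D)$ big, one picks $x\in T$ outside $\sB_+(\OO_T(D))$ and outside the other irreducible components of $\sB_+(D)$; the $D$-null subvariety $V$ through $x$ supplied by Nakamaye must then lie in $T$, whence $V\subseteq\sB_+(\OO_T(D))$ by the easy inclusion $\Null\subseteq\sB_+$ on $T$, contradicting the choice of $x$. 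So your proof and the paper's coincide once the ``Equivalently'' is unpacked; you have simply absorbed the paper's contradiction argument into a (true, and commonly cited) corollary of Nakamaye's theorem.
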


\begin{proof}
Arguing by contradiction, we assume that $\mathcal L:=\OO_T(D)$ is big. Fix a point $x\in T\setminus\sB_+(\mathcal L)$ which does not belong to any other component of $\sB_+(D)$. Since $x\in\sB_+(D)$, by \cite[Theorem 0.3]{Nak00} there exists a positive dimensional subvariety $V\subseteq\sB_+(D)\subseteq X$ such that $x\in V$ and $D^{\dim V}\cdot V=0$. By the choice of $x$ we necessarily have $V\subseteq T$, and hence
$$\mathcal L^{\dim V}\cdot V=0.$$
But then $V\subseteq\sB_+(\mathcal L)$ again by \cite[Theorem 0.3]{Nak00}, hence $x\in \sB_+(\mathcal L)$, a contradiction.
\end{proof}

\subsection{Pairs and resolutions}\label{subsec:pairs}

A \emph{pair} $(X,\Delta)$ consists of a normal variety $X$ and a Weil $\Q$-divisor $\Delta$ such that $K_X+\Delta$ is $\Q$-Cartier. A pair $(X,\Delta)$ is \emph{log smooth} if $X$ is smooth and the support of $\Delta$ is a simple normal crossings divisor. In this paper, unless explicitly stated otherwise, we do not require that $\Delta$ is an effective divisor.

A \emph{log resolution} of a pair $(X,\Delta)$ is a birational morphism $f\colon Y\to X$ such that $Y$ is smooth, the exceptional locus $\Exc(f)$ is a divisor and the divisor $f_*^{-1}\Delta+\Exc(f)$ has simple normal crossings support.

In this paper we use the term \emph{embedded resolution} of a pair $(X,\Delta)$ in the following strong sense: it is a log resolution of the pair which is an isomorphism on the locus where $X$ is smooth and $\Delta$ has simple normal crossings. The existence of embedded resolutions in this sense was proved in \cite{Sza94}, see also \cite[Theorem 10.45(2)]{Kol13}.

We will need the following lemma in the proof of Proposition \ref{pro:step1ab}.

\begin{lem}\label{lem:blowup}
Let $\pi\colon X'\to X$ be a finite morphism between normal proper varieties, and let $\widetilde f\colon \widetilde Y\to X'$ be a birational morphism. Then there exists a birational morphism $f\colon Y\to X$ such that, if $Y'$ is the normalisation of the main component of the fibre product $X'\times_X Y$ and $f'\colon Y'\dashrightarrow \widetilde Y$ is the induced birational map, then $(f')^{-1}$ is an isomorphism at the generic point of each $\widetilde f$-exceptional prime divisor on $\widetilde Y$.
$$
\xymatrix{ X' \ar[d]_\pi & \widetilde Y \ar[l]_{\quad \widetilde f} & Y' \ar@{-->}[l]_{\quad f'} \ar[d] \\
            X && Y \ar[ll]_f
 }
$$
Moreover, we may assume that $f$ is a composition of blowups along proper subvarieties.
\end{lem}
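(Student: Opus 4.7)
The plan is to identify each $\widetilde f$-exceptional prime divisor on $\widetilde Y$ with a divisorial valuation on the function field $K(X')$, to restrict these valuations to $K(X)$, and then to build $f\colon Y\to X$ as a sequence of blowups that realises the restricted valuations as prime divisors on $Y$. The finite base change $Y'\to Y$ will then automatically record the original valuations as prime divisors on $Y'$, which is precisely what is needed.

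First I would list the $\widetilde f$-exceptional prime divisors $E_1,\ldots,E_r\subseteq\widetilde Y$ and let $v'_j$ be the divisorial valuation of $K(\widetilde Y)=K(X')$ corresponding to $E_j$. Since $\pi$ is finite, the induced extension of function fields $K(X)\subseteq K(X')$ is finite, so each restriction $v_j:=v'_j|_{K(X)}$ is a discrete valuation of rank one whose residue field has transcendence degree $\dim X-1$ over $\C$; in particular $v_j$ is divisorial on $K(X)$.

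Next I would construct a projective birational morphism $f\colon Y\to X$ simultaneously realising all of $v_1,\ldots,v_r$ as prime divisors $F_1,\ldots,F_r\subseteq Y$. For any single $v_j$ this is standard: iteratively blow up the centre of $v_j$ on the current model, a process that terminates once $v_j$ becomes the valuation at a prime divisor; a common dominating model of the resulting $Y_1,\ldots,Y_r$ then works for all $j$ at once. The step I expect to require the most care, and thus the main obstacle, is to arrange that this composite $f$ is literally presented as a sequence of blowups along proper closed subvarieties of the intermediate varieties, as the statement demands; this is classical but a little fiddly to state cleanly.

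Finally, I would take $Y'$ to be the normalisation of the main component of $X'\times_X Y$. The map $Y'\to Y$ is finite by base change from $\pi$, and since $K(Y)=K(X)$ the main component has function field $K(X')\otimes_{K(X)}K(X)=K(X')$, so $Y'$ is birational to $\widetilde Y$ and the diagram in the statement makes sense. For each $j$, the valuation $v'_j$ on $K(Y')=K(X')$ restricts to $v_j$ on $K(Y)=K(X)$, which is centred at the generic point of $F_j$. Since $Y'\to Y$ is finite between normal varieties, the preimage of this generic point in $Y'$ consists of finitely many codimension-one points, at each of which the local ring of $Y'$ is a DVR; one of them must be the centre of $v'_j$, yielding a prime divisor $E'_j$ on $Y'$ with $\OO_{Y',\eta_{E'_j}}$ equal to the DVR of $v'_j$. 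This same DVR is $\OO_{\widetilde Y,\eta_{E_j}}$, so the two local rings coincide and $(f')^{-1}$ is a local isomorphism at the generic point of $E_j$, as required.
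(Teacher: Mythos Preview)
Your proposal is correct and follows essentially the same route as the paper: restrict each exceptional divisorial valuation on $K(X')$ to a divisorial valuation on $K(X)$ via the finite extension, realise these as prime divisors on a model $Y\to X$ built by successive blowups, and then use finiteness and normality of $Y'\to Y$ to see that the original valuations appear as prime divisors on $Y'$. The paper packages the first and last steps into a reference to \cite[Proposition~2.14(i)]{DL15} and the blowup realisation into \cite[Lemma~2.45]{KM98}, whereas you unpack these explicitly, but the strategy is identical.
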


\begin{proof}
Each exceptional divisor on $\widetilde Y$ gives a divisorial valuation $E_i'$ over $X'$. By the proof of \cite[Proposition 2.14(i)]{DL15} there are geometric valuations $E_i$ over $X$ such that the following holds:

Let $f\colon Y\to X$ be any birational model such that all valuations $E_i$ are realised as prime divisors on $Y$; we can assume that $f$ is a sequence of blowups along proper subvarieties by \cite[Lemma 2.45]{KM98}. If $Y'$ is the normalisation of the main component of the fibre product $X'\times_X Y$, then every valuation $E_i'$ is a prime divisor on $Y'$.

Now, since the $E_i'$ are prime divisors on both $\widetilde{Y}$ and $Y'$, the induced birational map $f'\colon Y'\dashrightarrow \widetilde Y$ has to be an isomorphism at the corresponding generic points, which proves the lemma.
\end{proof}

\begin{dfn}
{\rm Let $(X,\Delta)$ be a pair and let $\pi\colon Y\to X$ be a birational morphism, where $Y$ is normal. We can write
$$K_Y\sim_\Q \pi^*(K_X+\Delta)+\sum a(E_i,X,\Delta)\cdot E_i,$$
where $E_i\subseteq Y$ are distinct prime divisors and the numbers $a(E_i,X,\Delta)\in\Q$ are \emph{discrepancies}. The closure of the image of a geometric valuation $E$ on $X$ is the \emph{centre of $E$ on $X$}, denoted by $c_X(E)$. The pair $(X,\Delta)$ is \emph{klt}, respectively \emph{log canonical}, if $a(E,X,\Delta)>-1$, respectively $a(E,X,\Delta) \geq -1$, for every geometric valuation $E$ over $X$.

The pair $(X,\Delta)$ is \emph{dlt} if $\Delta\geq0$ and there is a closed subset $Z\subseteq X$ such that $X\setminus Z$ is smooth, $\Delta\vert_{X\setminus Z}$ is a simple normal crossings divisor, and if $c_X( E) \subseteq Z$ for a geometric valuation $E$ over $X$, then $a(E,X,\Delta)>-1$.

A closed subset $Z\subseteq X$ is a \emph{log canonical centre} of a log canonical pair $(X,\Delta)$ if $Z=c_X( E)$ for some geometric valuation $E$ over $X$ with $a(E,X,\Delta)=-1$.

Let $f\colon (X,\Delta)\to Y$ be a proper morphism and let $Z\subseteq Y$ be a closed subset. Then $(X,\Delta)$ is klt, respectively log canonical, over $Z$ (or over the generic point of $Z$) if $a(E,X,\Delta)>-1$, resp.\ $a(E,X,\Delta) \geq -1$, for every geometric valuation $E$ over $X$ with $f\big(c_X(E)\big)=Z$. We say that a closed subvariety $S$ of $X$ is a \emph{minimal log canonical centre of $(X,\Delta)$ over $Z$} if $S$ is a minimal log canonical centre of $(X,\Delta)$ (with respect to inclusion) which dominates $Z$.}
\end{dfn}

The following is \cite[Proposition 3.9.2]{Fuj07c}, see also the proof of \cite[Theorem 4.16]{Kol13}.

\begin{pro}\label{pro:fujino}
Let $(X,\Delta)$ be a dlt pair. Let $\Delta^{=1}= \sum_{i\in I} D_i$ be the decomposition into irreducible components. Then $S$ is a log canonical centre of $(X,\Delta)$ with $\codim_X S = k$ if and only if $S$ is an irreducible component of $D_{i_1} \cap D_{i_2}\cap\dots\cap D_{i_k}$ for some $\{i_1, i_2,\ldots, i_k\} \subseteq I$. Moreover, $S$ is normal and it is a log canonical centre of the dlt pair $\big(D_{i_1},(\Delta-D_{i_1})|_{D_{i_1}}\big)$.
\end{pro}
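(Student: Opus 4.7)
The plan is to exploit the definition of dlt directly. By hypothesis there exists a closed subset $Z \subseteq X$ such that $U := X \setminus Z$ is smooth, $\Delta|_U$ has simple normal crossings support, and every geometric valuation $E$ over $X$ with $\cen_X(E) \subseteq Z$ satisfies $a(E,X,\Delta) > -1$. In particular, any log canonical centre $S$ of $(X,\Delta)$ must meet $U$: if $S = \cen_X(E)$ for some valuation $E$ with $a(E,X,\Delta) = -1$, then $S \subseteq Z$ would contradict the dlt hypothesis, so $S \cap U$ is a nonempty open dense subset of $S$.

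First I would establish the ``if and only if'' assertion by reducing to the log smooth locus. On $U$ the pair $(U,\Delta|_U)$ is log smooth, and for such a pair the log canonical centres are precisely the irreducible components of intersections of prime components of $\Delta^{=1}$: a weighted blowup along $D_{i_1}\cap\cdots\cap D_{i_k}$ realises a geometric valuation with discrepancy $-1$ whose centre is that intersection, while conversely any valuation with discrepancy $-1$ must be monomial along some such intersection by a direct local computation in coordinates adapted to $\Delta|_U$. Hence $S \cap U$ is an irreducible component of $(D_{i_1}\cap\cdots\cap D_{i_k}) \cap U$ of codimension $k$ in $U$, and taking closures in $X$ gives the desired characterisation; the converse is the same computation read backwards.

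For the normality of $S$ and the ``moreover'' statement I would argue by induction on $k$. The base case $k = 1$ is the normality of each $D_i$. The route I have in mind is to take a log resolution $\pi\colon Y \to X$ (an isomorphism above the SNC locus), let $\widetilde D_i \subseteq Y$ be the smooth strict transform, and prove the vanishing $R^1\pi_* \OO_Y(-\widetilde D_i) = 0$ by combining Kawamata--Viehweg vanishing with the Koll\'ar--Shokurov connectedness principle; then the short exact sequence $0 \to \OO_Y(-\widetilde D_i) \to \OO_Y \to \OO_{\widetilde D_i} \to 0$ together with $\pi_*\OO_Y = \OO_X$ yields $\pi_*\OO_{\widetilde D_i} = \OO_{D_i}$, forcing $D_i$ to be normal. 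Once this is known, classical adjunction produces a boundary $\Diff_{D_i}(\Delta - D_i)$ on $D_i$ with $K_{D_i} + \Diff_{D_i}(\Delta - D_i) \sim_\Q (K_X + \Delta)|_{D_i}$, and one checks that $\bigl(D_i, \Diff_{D_i}(\Delta - D_i)\bigr)$ is again dlt, with distinguished open locus $(X \setminus Z) \cap D_i$. Applying the already established characterisation to this smaller dlt pair exhibits $S$ as a log canonical centre of codimension $k-1$ there, closing the induction.

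The main technical obstacle is the base case, i.e.\ the normality of the prime components $D_i$ of $\Delta^{=1}$: the SNC computation on $U$ and the inductive step via adjunction are essentially formal once normality is in place. What genuinely requires the dlt (and not merely lc) hypothesis is the vanishing $R^1\pi_*\OO_Y(-\widetilde D_i) = 0$, which forces one to exploit that all $\pi$-exceptional divisors outside the strict transform of $\Delta^{=1}$ have discrepancy strictly greater than $-1$, so that a small klt perturbation of the pullback boundary on $Y$ puts one in the scope of Kawamata--Viehweg. Setting up this perturbation and the exceptional accounting correctly is the delicate point of the argument.
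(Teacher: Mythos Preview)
Your outline is essentially correct and follows the standard route, but note that the paper does not actually prove this proposition: it is stated as a direct citation of \cite[Proposition 3.9.2]{Fuj07c}, with a pointer to the proof of \cite[Theorem 4.16]{Kol13}. What you have sketched is, in broad strokes, precisely the argument found in those references: reduce to the SNC locus using the definition of dlt, characterise log canonical centres there by a local monomial computation, prove normality of the $D_i$ via a vanishing theorem on a log resolution (this is the heart of Fujino's argument), and then induct via adjunction.

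Two minor remarks. First, your use of $\Diff_{D_i}(\Delta - D_i)$ is the technically correct object; the paper's notation $(\Delta - D_{i_1})|_{D_{i_1}}$ is a slight abuse, but for dlt pairs the different agrees with the naive restriction generically along each component, which is all that matters for identifying log canonical centres. Second, your description of the vanishing step is on target but slightly compressed: in Fujino's argument one does not literally need the Koll\'ar--Shokurov connectedness lemma, only the klt-type perturbation you describe together with relative Kawamata--Viehweg vanishing applied to a carefully chosen divisor of the form $K_Y + \pi_*^{-1}\Delta + (1-\varepsilon)\,\text{(exceptionals)}$; the point is exactly that the dlt hypothesis forces the exceptional discrepancies to be $> -1$, so the perturbed pair is klt. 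You have identified this as the crux, which is correct.
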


\begin{dfn}
{\rm Let $(X_1, \Delta_1)$ and $(X_2, \Delta_2)$ be two pairs. A birational map $\theta\colon X_1\dasharrow X_2$ is \emph{crepant birational} if $a(E,X_1,\Delta_1)=a(E,X_2,\Delta_2)$ for every geometric valuation $E$ over $X_1$ and $X_2$. If additionally we have a commutative diagram
$$
\xymatrix{
X_1\ar[rd]_{f_1} \ar@{-->}[rr]^{\theta}&&X_2\ar[ld]^{f_2}\\
&Y&
}
$$
for some variety $Y$, then we say that $\theta$ is \emph{crepant birational over $Y$}.}
\end{dfn}

We will need the following lemma in the proof of Proposition \ref{pro:step1aa}.

\begin{lem}\label{lem:1to1}
Let $(X,\Delta)$ be a dlt pair such that $K_X+\Delta\sim_\Q F$, where $F$ is an effective $\Q$-divisor having no common components with $\Delta^{=1}$. Assume that there exists a smooth open subset $U\subseteq X$ which intersects every log canonical centre of $(X,\Delta)$, and such that the divisor $(\Delta+F)|_U$ has simple normal crossings support.

Then any $(K_X+\Delta)$-MMP is an isomorphism at the generic point of each log canonical centre of $(X,\Delta)$, and moreover, it induces an inclusion-preserving bijection from the set of log canonical centres of $(X,\Delta)$ to the set of log canonical centres at each step of the MMP.
\end{lem}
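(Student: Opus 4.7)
The plan is to show first that any $(K_X+\Delta)$-MMP is an isomorphism at the generic point of every log canonical centre of $(X,\Delta)$, and then to read off the inclusion-preserving bijection by combining this with the fact that log canonical centres are preserved under crepant birational maps between dlt pairs. For the first part, the key local observation is that no log canonical centre $S$ of $(X,\Delta)$ is contained in $\Supp F$. By Proposition~\ref{pro:fujino}, $S$ is an irreducible component of some intersection $D_{i_1}\cap\cdots\cap D_{i_k}$ of components of $\Delta^{=1}$, and since $S$ meets $U$ I would work in local analytic coordinates at a point of $S\cap U$ where $(\Delta+F)|_U$ has simple normal crossings support; there each $D_{i_j}$ is a coordinate hyperplane, and because $F$ shares no component with $\Delta^{=1}$, each component of $\Supp F$ is a different coordinate hyperplane, cutting $S$ in a proper closed subset, so the generic point $\eta_S$ is not in $\Supp F$. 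Combining this with the numerical fact that $K_X+\Delta\sim_\Q F\geq 0$ forces any $(K_X+\Delta)$-negative extremal ray to have a generating curve contained in $\Supp F$, and hence places the exceptional or flipping locus of each MMP step inside $\Supp F$. Therefore every MMP step $\varphi\colon X\dashrightarrow X'$ is an isomorphism on an open neighbourhood of $\eta_S$ for each log canonical centre $S$.

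For the bijection, I would use that a single MMP step is crepant birational and that crepant birational dlt pairs share the same set of log canonical valuations, inducing a canonical bijection between their log canonical centres. Under our hypotheses this coincides with the strict-transform map: no component of $\Delta^{=1}$ is contracted (being outside $\Supp F$), and Proposition~\ref{pro:fujino} applied to $(X,\Delta)$ and $(X',\varphi_*\Delta)$ identifies the strata $D_{i_1}\cap\cdots\cap D_{i_k}$ and $D'_{i_1}\cap\cdots\cap D'_{i_k}$, whose irreducible components are matched up by the isomorphism of $\varphi$ at their generic points. Since inclusion of strata is controlled by inclusion of index sets, the bijection preserves inclusion. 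Iterating across the MMP gives the statement at every step.

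The main obstacle I anticipate is the surjectivity direction of the bijection, that is, verifying that no ``new'' log canonical centre appears on $X'$ after a step. This I would handle by invoking the crepant-birational identification of log canonical centres via log canonical valuations; alternatively, a direct argument observes that any putative new centre on $X'$ would lie in the flipped or contracted locus of $\varphi^{-1}$, while its associated log canonical valuation would still have $X$-centre equal to a log canonical centre of $(X,\Delta)$, contradicting the isomorphism at its generic point established above.
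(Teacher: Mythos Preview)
Your overall strategy is the paper's: the exceptional locus of each contraction lies in the strict transform of $\Supp F$, no log canonical centre lies in $\Supp F$ because $(\Delta+F)|_U$ has SNC support and $F$ shares no component with $\Delta^{=1}$, hence each step is an isomorphism at the generic point of every log canonical centre. Two points need correction.

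First, the claim that ``a single MMP step is crepant birational'' is false. For a step $\varphi\colon(X,\Delta)\dashrightarrow(X',\varphi_*\Delta)$ of a $(K_X+\Delta)$-MMP one has, on a common resolution, $p^*(K_X+\Delta)=q^*(K_{X'}+\varphi_*\Delta)+E$ with $E\geq0$ and $E\neq0$; discrepancies strictly increase along the modified locus. So you cannot obtain the bijection of log canonical centres from a crepant identification. The correct input---and the one the paper invokes---is only that the MMP does not \emph{decrease} discrepancies: if $a(E,X',\varphi_*\Delta)=-1$ then $a(E,X,\Delta)\leq-1$, hence $=-1$ as $(X,\Delta)$ is lc; thus $c_X(E)$ is a log canonical centre $S$ of $(X,\Delta)$, and since $\varphi$ is an isomorphism at $\eta_S$ one gets $c_{X'}(E)=S'$, the strict transform. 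This is essentially your ``alternative'' argument for surjectivity, and it should replace the crepant claim outright. The forward direction (that each $S'$ is a log canonical centre of $(X',\varphi_*\Delta)$) then follows, as you indicate, from Proposition~\ref{pro:fujino} together with the local isomorphism at $\eta_S$.

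Second, the phrase ``iterating across the MMP'' hides a point that must be made explicit: the hypotheses have to be propagated. To rerun the argument at the $i$-th step you need a smooth open $U_i\subseteq X_i$ meeting every log canonical centre of $(X_i,\Delta_i)$ on which $(\Delta_i+F_i)|_{U_i}$ has SNC support; without this, Proposition~\ref{pro:fujino} alone does not prevent a log canonical centre of $(X_i,\Delta_i)$ from lying in $\Supp F_i$. The paper handles this by induction: once $\theta_i$ is shown to be an isomorphism at each such generic point, one shrinks $U_i$ so that $\theta_i|_{U_i}$ is an isomorphism and takes $U_{i+1}$ to be its image (through the flip if $\theta_i$ is small). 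You should spell this out rather than leaving it to ``iterating''.
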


\begin{proof}
For $i\geq0$ let $(X_i,\Delta_i)$ be the pairs in the steps of this MMP with $(X_0,\Delta_0):=(X,\Delta)$, and let $F_i$ be the strict transform of $F$ on $X_i$.

\medskip

\emph{Step 1.}
Let $\theta_i\colon X_i\to Z_i$ be the extremal contraction at the $i$-th step of the MMP. If $\Gamma_i$ is a curve on $X_i$ contracted by $\theta_i$, then
$$F_i\cdot\Gamma_i=(K_{X_i}+\Delta_i)\cdot\Gamma_i<0,$$
and hence $\Exc(\theta_i)\subseteq \Supp F_i$.

\medskip

\emph{Step 2.}
We now show by induction on $i$ that:
\begin{enumerate}
\item[(a)$_i$] $\theta_i$ is an isomorphism at the generic point of each log canonical centre of $(X_i,\Delta_i)$, and
\item[(b)$_i$] there exists a smooth open subset $U_i\subseteq X_i$ containing the generic point of each log canonical centre of $(X_i,\Delta_i)$ such that $(\Delta_i+F_i)|_{U_i}$ has simple normal crossings support.
\end{enumerate}
Indeed, note that (b)$_0$ holds by assumption. Next, assume that (b)$_i$ holds for some index $i$. Let $P$ be a log canonical centre of $(X_i,\Delta_i)$. Then the generic point of $P$ belongs to $U_i$ and $P$ is an irreducible component of an intersection of components of $\Delta_i^{=1}$ by Proposition \ref{pro:fujino}, hence $P$ cannot be contained in $\Supp F_i$ by (b)$_i$. By Step 1, the map $\theta_i$ is an isomorphism at the generic point of $P$, which shows (a)$_i$. Therefore, by possibly shrinking $U_i$, we may assume that $\theta_i|_{U_i}$ is an isomorphism, and we define $U_{i+1}$ as $\theta_i(U_i)$ if $\theta_i$ is divisorial, or as the image of $U_i$ by the corresponding flip if $\theta_i$ is a flipping contraction. This shows (b)$_{i+1}$.

\medskip

\emph{Step 3.}
Finally, the last statement in the lemma follows immediately from Step 2, from the definition of dlt singularities, and the fact that the MMP does not decrease discrepancies.
\end{proof}

\subsection{Weakly exceptional divisors}

We use the relative Nakayama-Zariski decomposition of pseudoeffective divisors as in \cite[Chapter III]{Nak04}. Note that by \cite{Les16} this is not always well-defined; however, in all the cases we consider in this paper, the decomposition exists and behaves as in the absolute case. Note that one can define this decomposition on any $\Q$-factorial variety; below we give additional comments when we use it on non-smooth varieties.

The following definition is crucial for applications in Sections \ref{sec:5and6} and \ref{sec:reduction}. Note that part (b) differs somewhat from that in \cite{Nak04}, see Remark \ref{rem:weakly}.

\begin{dfn}\label{insuff}
{\rm Let $f\colon X\rightarrow Y$ be a projective surjective morphism of normal varieties and let $D$ be an effective Weil $\R$-divisor on $X$ such that $f(D)\neq Y$. Then $D$ is:
\begin{enumerate}
\item[(a)] \emph{$f$-exceptional} if $\codim_Y\Supp f(D)\geq2$,
\item[(b)] \emph{of insufficient fibre type over $Y$} if $f(D)$ has pure codimension $1$ and for every prime divisor $\Gamma\subseteq f(D)$ there exists a divisor $E\not\subseteq\Supp D$ such that $f(E)=\Gamma$.
\item[(c)] \emph{weakly $f$-exceptional} if there are effective divisors $D_1$ and $D_2$ such that $D=D_1+D_2$, where $D_1$ is $f$-exceptional and $D_2$ is of insufficient fibre type over $Y$.
\end{enumerate}
Let additionally $g\colon Y\rightarrow Z$ be a projective surjective morphism of normal varieties and let $D_h$ and $D_v$ denote the horizontal and vertical parts of $D$ with respect to $g\circ f$. If $D_h$ is $f$-exceptional and $D_v$ is weakly $(g\circ f)$-exceptional, then we call $D$ an \emph{$(f,g)$-EWE divisor}.}
\end{dfn}

\begin{rem}
{\rm The abbreviation EWE stands for ``exceptional-weakly-exceptional''.}
\end{rem}

Proposition \ref{pro:EWE} shows the most important property of EWE divisors.

\begin{lem}\label{VE55}
Let $f\colon X\to Y$ and  $g\colon Y\rightarrow Z$ be projective surjective morphisms with connected fibres between normal varieties, assume that $X$ is smooth, and let $D$ be an effective $(f,g)$-EWE divisor on $X$. Then there exists a component $\Gamma$ of $D$ such that $\OO_\Gamma(D)$ is not $(g\circ f)|_\Gamma$-pseudoeffective over $g\big(f(\Gamma)\big)$.
\end{lem}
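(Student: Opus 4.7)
The plan is to find, in each case, a component $\Gamma$ of $D$ and an irreducible curve $C\subseteq\Gamma$ contracted by $h|_\Gamma$ (where $h:=g\circ f$) such that $D\cdot C<0$; this forces $\OO_\Gamma(D)\cdot C<0$, ruling out the $h|_\Gamma$-pseudoeffectivity of $\OO_\Gamma(D)$ over $g(f(\Gamma))$. I split according to whether the horizontal part $D_h$ of $D$ with respect to $h$ is zero.

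\textbf{Horizontal case ($D_h\neq 0$).} Choose $z\in Z$ general enough that $X_z:=h^{-1}(z)$ is smooth, $z\notin h(D_v)$, and $f(D_h)\cap Y_z$ has codimension $\geq 2$ in $Y_z:=g^{-1}(z)$. Then $D|_{X_z}=D_h|_{X_z}$ is a nonzero effective divisor on $X_z$, and $f_z:=f|_{X_z}\colon X_z\to Y_z$ is projective surjective with connected fibres such that $D_h|_{X_z}$ is $f_z$-exceptional. By the classical relative negativity lemma there is a component $\Gamma_0$ of $D_h|_{X_z}$ and an irreducible curve $C\subseteq\Gamma_0$ contracted by $f_z$ with $(D_h|_{X_z})\cdot C<0$. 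Since $D_v\cap X_z=\emptyset$, also $D\cdot C<0$, and the component $\Gamma\subseteq D_h$ containing $\Gamma_0$ is the required one (with $h(\Gamma)=Z$).

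\textbf{Vertical case ($D=D_v$).} Here $D$ is weakly $h$-exceptional; write $D=D_1+D_2$ with $D_1$ being $h$-exceptional and $D_2$ of insufficient fibre type. Assume first $D_2\neq 0$, and pick a divisorial component $\Gamma_0$ of $h(D_2)$. Write $h^*\Gamma_0=\sum_i m_iE_i$ on $X$ and let $a_i$ be the coefficient of $E_i$ in $D$; since $h(D_1)$ has codimension $\geq 2$, the set $I:=\{i:a_i>0\}$ consists only of indices with $E_i\subseteq\Supp D_2$, and by the insufficient fibre condition there exists $i_0\notin I$ with $h(E_{i_0})=\Gamma_0$. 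Pick $j\in I$ maximising $a_j/m_j$, choose $w\in\Gamma_0$ generic so that $w$ avoids the $h$-images of all components of $D$ not mapping onto $\Gamma_0$, and select an irreducible curve $C\subseteq E_j\cap h^{-1}(w)$ meeting $E_{i_0}$ transversally. All components of $D$ outside $h^{-1}(\Gamma_0)$ contribute zero to $D\cdot C$ by the genericity of $w$, and since $h^*\Gamma_0\cdot C=0$ one computes
\[
D\cdot C=\sum_{i\in I\setminus\{j\}}\Bigl(a_i-\tfrac{a_j}{m_j}m_i\Bigr)(E_i\cdot C)-\tfrac{a_j}{m_j}\sum_{i\notin I}m_i(E_i\cdot C)<0,
\]
the first sum being $\leq 0$ by the maximality of $a_j/m_j$ together with $E_i\cdot C\geq 0$ for $i\neq j$, and the second being strictly negative thanks to $E_{i_0}\cdot C>0$. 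The subcase $D_2=0$ with $D=D_1$ purely $h$-exceptional is handled similarly after restricting the base to a general complete intersection curve through a generic point of $h(D_1)$, which converts the configuration into an insufficient-fibre-type situation over that curve; alternatively one invokes the relative Nakayama--Zariski theory, which gives $D=N_h(D)$ and supplies the desired negativity on some component of $D$.

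\textbf{Main obstacle.} The delicate bookkeeping is concentrated in the vertical case: one must select $C\subseteq E_j$ inside a single fibre of $h$, avoid every component of $D$ that does not map onto $\Gamma_0$, and meet the filler divisor $E_{i_0}\not\subseteq\Supp D$ transversally. A dimension count for $E_j\cap E_{i_0}\cap h^{-1}(w)$ at a generic $w\in\Gamma_0$ supplies such a $C$ when the fibres of $h|_{E_j}\to\Gamma_0$ have dimension $\geq 2$; the one-dimensional-fibre case requires an additional verification that $E_j$ and $E_{i_0}$ meet in some fibre of $h$, which follows from the structure of $h^*\Gamma_0$. The maximising-ratio choice of $j$ is the key technical device converting the trivial identity $h^*\Gamma_0\cdot C=0$ into the genuine strict inequality $D\cdot C<0$.
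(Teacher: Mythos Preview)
Your very first sentence contains a genuine gap: having a single contracted curve $C\subseteq\Gamma$ with $\OO_\Gamma(D)\cdot C<0$ does \emph{not} rule out $h|_\Gamma$-pseudoeffectivity of $\OO_\Gamma(D)$. A pseudoeffective (even effective) divisor may meet curves in its own support negatively; for a concrete model, take $\Gamma$ a smooth surface mapping to a point and $\OO_\Gamma(D)$ the class of a $(-1)$-curve $C$: then $\OO_\Gamma(D)\cdot C=-1$ yet $\OO_\Gamma(D)$ is effective. To deduce non-pseudoeffectivity from curve intersections you would need the curves to form a \emph{covering family} of $\Gamma$, and your construction does not supply one. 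In the vertical case the curve $C$ is forced to meet $E_{i_0}$, so the family you actually produce may only sweep out a neighbourhood of $E_j\cap E_{i_0}$; and for a truly general curve in a fibre of $E_j\to\Gamma_0$ the strict inequality can degenerate to $D\cdot C\le 0$, since nothing guarantees that $E_j\cap E_{i_0}$ dominates $\Gamma_0$. Your ``Main obstacle'' paragraph already signals this: the one-dimensional-fibre case needs $E_j$ and $E_{i_0}$ to meet over a \emph{general} point of $\Gamma_0$, not just in some fibre, and that is not established.

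The paper avoids all of this by quoting \cite[Lemmas III.5.1 and III.5.2]{Nak04} directly; those results conclude non-pseudoeffectivity via the $\sigma$-decomposition rather than via a single test curve. Your maximising-ratio choice of $j$ is indeed the key input to Nakayama's III.5.2, but there the conclusion is drawn by observing that $\OO_{E_j}\big(D-\tfrac{a_j}{m_j}h^*\Gamma_0\big)$ is anti-effective along fibres and then comparing with $N_\sigma$, not by exhibiting one negative curve. Note also that your fallback in the $D_2=0$ subcase (``alternatively one invokes the relative Nakayama--Zariski theory, which gives $D=N_h(D)$'') is circular here: in the paper that identity is precisely Proposition~\ref{pro:EWE}, whose proof rests on the lemma you are trying to establish.
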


\begin{proof}
Let $D_h$ and $D_v$ denote the horizontal and vertical parts of $D$ with respect to $g\circ f$. Assume first that $D_h=0$. If $D_v$ is $(g\circ f)$-exceptional, we are done by \cite[Lemma III.5.1]{Nak04}. Otherwise, choose a prime divisor $\Gamma\subseteq\Supp D_v$ such that $g\big(f(\Gamma)\big)$ is a divisor. Then $\OO_\Gamma(D)$ is not $(g\circ f)|_\Gamma$-pseudoeffective over $g\big(f(\Gamma)\big)$ by \cite[Lemma III.5.2]{Nak04}.

Thus we may assume that $D_h\neq0$. By \cite[Lemma III.5.1]{Nak04} there is a component $\Gamma$ of $D_h$ such that $\OO_\Gamma(D_h)$ is not $(f|_\Gamma)$-pseudoeffective over $f(\Gamma)$, hence $\OO_\Gamma(D_h)$ is not $(g\circ f)|_\Gamma$-pseudoeffective over $g\big(f(\Gamma)\big)=Z$. Since $D_v$ does not intersect the generic fibre of $g\circ f$, we obtain that $\OO_\Gamma(D_h+D_v)$ is not $(g\circ f)|_\Gamma$-pseudoeffective over $Z$, which implies the lemma.
\end{proof}

\begin{pro}\label{pro:EWE}
Let $f\colon X\to Y$ and  $g\colon Y\rightarrow Z$ be projective surjective morphisms of normal varieties, assume that $X$ is smooth, and let $D$ be an effective $(f,g)$-EWE divisor on $X$. Then $D=N_{\sigma}(D;X/Z)$.
\end{pro}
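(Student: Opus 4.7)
The plan is to argue by contradiction using the positive part of the relative Nakayama--Zariski decomposition directly, rather than inducting on the number of components of $D$. Assume $D \neq N_\sigma(D;X/Z)$ and set $P := P_\sigma(D;X/Z) = D - N_\sigma(D;X/Z)$; this is a non-zero effective $\R$-divisor satisfying $\sigma_\Gamma(P;X/Z) = 0$ for every prime divisor $\Gamma$ on $X$, which is the defining property of the positive part of the decomposition.

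The first step I would carry out is to verify that $P$ is itself an $(f,g)$-EWE divisor. Since $\Supp P \subseteq \Supp D$, the horizontal part $P_h$ of $P$ with respect to $g\circ f$ is a subdivisor of $D_h$ and hence $f$-exceptional. For the vertical part $P_v$, I would place those components of $P_v$ whose image in $Z$ has codimension at least two into $P_v^{(1)}$ and the remaining components into $P_v^{(2)}$; as the primes appearing in $(g\circ f)(P_v^{(2)})$ all belong to $(g\circ f)(D_v^{(2)})$, the insufficient-fibre-type property is inherited from $D_v^{(2)}$, so $P_v$ is weakly $(g\circ f)$-exceptional.

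Next, apply Lemma \ref{VE55} to $P$ to find a component $\Gamma$ of $P$ such that $\OO_\Gamma(P)$ is not pseudoeffective over $g(f(\Gamma))$ relative to $\bar g := (g\circ f)|_\Gamma$. On the other hand, $\sigma_\Gamma(P;X/Z) = 0$ supplies, for each $(g\circ f)$-ample divisor $A$ on $X$ and each $\epsilon > 0$, an effective $\R$-divisor $P'_\epsilon \sim_{\R,Z} P + \epsilon A$ with $a_\epsilon := \mult_\Gamma P'_\epsilon$ tending to $0$. Writing $P'_\epsilon = a_\epsilon \Gamma + E_\epsilon$ with $\Gamma \not\subseteq \Supp E_\epsilon$, restricting the associated line bundles to $\Gamma$ and separating off the normal-bundle contribution $a_\epsilon(\Gamma|_\Gamma)$ yields, in the relative N\'eron--Severi space $N^1(\Gamma/g(f(\Gamma)))_\R$, the identity
$$[\OO_\Gamma(P)] = [E_\epsilon|_\Gamma] + a_\epsilon[\Gamma|_\Gamma] - \epsilon[A|_\Gamma].$$
Since $E_\epsilon|_\Gamma$ is effective and hence $\bar g$-pseudoeffective, and the last two terms tend to zero as $\epsilon \to 0$, the closedness of the relative pseudoeffective cone forces $[\OO_\Gamma(P)]$ to be $\bar g$-pseudoeffective -- contradicting the choice of $\Gamma$.

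The main technical point that I expect to require care is the preservation of the EWE structure in passing from $D$ to its positive part $P$, and in particular the preservation of the insufficient-fibre-type condition, since it is the only place where the combinatorial structure of the decomposition $D_v = D_v^{(1)} + D_v^{(2)}$ is actively used; once that is in place, the contradiction is a clean limit computation on $\Gamma$ relying only on the closedness of the relative pseudoeffective cone.
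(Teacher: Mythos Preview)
Your proof is correct and is essentially the argument the paper invokes: it follows \cite[Proposition III.5.7]{Nak04} with Lemma~\ref{VE55} in place of \cite[Lemma III.5.5]{Nak04}. You have simply written out the details --- pass to $P=P_\sigma(D;X/Z)$, check it inherits the $(f,g)$-EWE property, apply Lemma~\ref{VE55} to find a component $\Gamma$ with $\OO_\Gamma(P)$ not relatively pseudoeffective, and contradict this via a limit argument from $\sigma_\Gamma(P;X/Z)=0$ and closedness of the pseudoeffective cone.
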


\begin{proof}
The proof goes verbatim as the proof of \cite[Proposition III.5.7]{Nak04}, replacing \cite[Lemma III.5.5]{Nak04} with Lemma \ref{VE55}.
\end{proof}

\begin{rem}\label{rem:weakly}
{\rm The definition of insufficient fibre type above is slightly different from (and more precise than) the one in \cite[\S III.5.a]{Nak04}. In order for \cite[Corollary III.5.6 and Proposition III.5.7]{Nak04} to hold, one needs to work with the definition above; this is in fact implicit from the proofs of these two statements in op.\ cit. If one works with the definition as in op.\ cit., one can easily construct a counterexample: let $f\colon X\to Y$ be a fibration from a smooth surface to a smooth curve, let $F_1$ and $F_2$ be two distinct fibres of $f$ and let $\pi\colon \widetilde X\to X$ be the blowup of a point in $F_1$. Then $\pi_*^{-1}F_1+\pi^*F_2$ would be of insufficient fibre type over $Y$, and \cite[Proposition III.5.7 and Lemma III.4.2]{Nak04} would imply that $N_\sigma(\pi^*F_2;Z/Y)=\pi^*F_2$, a contradiction since $\pi^*F_2$ is $(f\circ\pi)$-nef.}
\end{rem}

We need in this paper the MMP with scaling as described in \cite[Definition 3.2]{Bir10}, and the fact that log canonical flips exist \cite{Bir12,HX13}. We also need the following.

\begin{lem}\label{lem:druel}
Let $(X,\Delta)$ be a $\Q$-factorial dlt pair and let $\pi\colon X\to U$ be a projective morphism such that $K_X+\Delta$ is $\pi$-pseudoeffective. If $N_\sigma(K_X+\Delta; X/U)$ is an $\R$-divisor, then any Minimal Model Program of $K_X+\Delta$ with scaling of an ample divisor over $U$ contracts precisely the components of $N_\sigma(K_X+\Delta; X/U)$.
\end{lem}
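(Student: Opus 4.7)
The plan is to track the strict transform of $N := N_\sigma(K_X+\Delta; X/U)$ along the steps of the MMP with scaling of an ample divisor $H$, and to establish two complementary facts: first, that every divisorial contraction in the MMP contracts a divisor lying in the support of the current Nakayama--Zariski negative part; second, that the MMP terminates, at which point all components of $N$ have been contracted.

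I would proceed as follows. Let $(X_i,\Delta_i,H_i)$ denote the data at the $i$-th step of the MMP and write $N_i := N_\sigma(K_{X_i}+\Delta_i;X_i/U)$ together with its positive counterpart $P_i$. Flips, being isomorphisms in codimension one, contract no divisors and only redistribute $N_i$. For a divisorial step $\phi_i\colon X_i \to X_{i+1}$ contracting an extremal ray $R$ with exceptional divisor $E$, any curve $C$ in $R$ satisfies $(K_{X_i}+\Delta_i)\cdot C < 0$. Since $C$ moves in a covering family of $E$, the movability of $P_i$ (captured by $N_\sigma(P_i;X_i/U)=0$) forces $P_i\cdot C \geq 0$, by approximating $P_i$ by effective $\R$-divisors whose support does not contain $E$ and passing to a limit of intersection numbers. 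Hence $N_i \cdot C < 0$, which implies that some component of $N_i$ meets $C$ negatively, and since $C$ covers $E$, that component must be $E$ itself.

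For termination, since $\Supp N$ is a finite union of prime divisors, only finitely many divisorial contractions can occur, and the termination of the intervening flips follows from the existence of log canonical flips \cite{Bir12,HX13} together with the usual mechanism of MMP with scaling: the decreasing nef thresholds can take only finitely many values, controlled by the finitely many components of $N$. At the end we arrive at $(X_m,\Delta_m)$ with $K_{X_m}+\Delta_m$ being $\pi$-nef, hence $N_\sigma(K_{X_m}+\Delta_m;X_m/U)=0$. Tracking $N_\sigma$ forwards through the MMP, using that for each step the pushforward of $N_i$ majorises $N_{i+1}$ and equals it away from the contracted locus, every component of the initial $N$ must have been contracted along the way.

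The main obstacle will be rigorously justifying the covering-curve argument $P_i\cdot C \geq 0$: the condition $N_\sigma(P_i;X_i/U)=0$ is asymptotic in nature rather than intersection-theoretic, and turning it into a positivity statement against a specific contracted curve requires careful use of the framework of \cite[Chapter III]{Nak04}, much in the spirit of Lemma \ref{VE55} and Proposition \ref{pro:EWE}. Termination of the flipping part, while classical in spirit, will likewise require a careful reduction to a setting where known results apply.
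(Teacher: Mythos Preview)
Your covering-curve argument for showing that each contracted divisor lies in $\Supp N_i$ is plausible and can likely be made rigorous along the lines you indicate. The serious gap is in the other direction---showing that \emph{every} component of $N$ is eventually contracted. You deduce this from termination of the MMP, but your justification is not valid: the claim that ``the decreasing nef thresholds can take only finitely many values, controlled by the finitely many components of $N$'' has no basis, and termination of flips for $\Q$-factorial dlt pairs is open in general. The existence of flips \cite{Bir12,HX13} does not help here, and nothing in your outline links the number of components of $N$ to the length of a sequence of flips. Knowing that only finitely many divisorial contractions occur is not enough: you must rule out that the MMP settles into an infinite sequence of flips while some component of $N$ survives uncontracted, and your sketch does not do this. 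You also flag the inequality $P_i\cdot C\geq 0$ as the ``main obstacle'', but that is the more tractable of the two issues; the missing ingredient is really the one you pass over.

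The paper takes a different route that avoids termination entirely. Rather than arguing step by step with curves, one passes to a common resolution $(p_i,q_i)\colon W_i\to X\times X_i$ and, using \cite[Lemma 2.16]{GL13} together with the behaviour of $N_\sigma$ under pullback and pushforward from \cite{Nak04}, proves the exact identity
\[
N_\sigma(K_{X_i}+\Delta_i;X_i/U)=\varphi_{i*}N_\sigma(K_X+\Delta;X/U).
\]
This single formula handles both directions at once, and in particular supplies---with equality---the tracking statement $(\phi_i)_*N_i\geq N_{i+1}$ that you assert but do not prove. The conclusion then comes from \cite[Theorem 2.3]{Fuj11b}, which guarantees (without claiming termination) that an MMP with scaling eventually brings $K_{X_i}+\Delta_i$ into the movable cone over $U$; at that point $N_\sigma(K_{X_i}+\Delta_i;X_i/U)=0$, and the identity forces $\varphi_{i*}N=0$, so every component of $N$ has been contracted.
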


\begin{proof}
For $i\geq0$, let $(X_i,\Delta_i)$ be the pairs in a $(K_X+\Delta)$-MMP with scaling of an ample divisor over $U$, with $(X_0,\Delta_0):=(X,\Delta)$. Let $(p_i,q_i)\colon W_i\to X\times X_i$ be a smooth resolution of indeterminacies of the induced birational map $\varphi_i\colon X\dashrightarrow X_i$. Then there exists an effective $q_i$-exceptional divisor $E_i$ on $W_i$ such that
\begin{equation}\label{eq:100}
p_i^*(K_X+\Delta)\sim_\Q q_i^*(K_{X_i}+\Delta_i)+E_i.
\end{equation}
Analogously as in the proof of \cite[Lemma 2.16]{GL13} we have
\begin{equation}\label{eq:101}
N_\sigma\big(q_i^*(K_{X_i}+\Delta_i)+E_i;W_i/U\big)=N_\sigma\big(q_i^*(K_{X_i}+\Delta_i);W_i/U\big)+E_i,
\end{equation}
and the first two lines of the proof of \cite[Lemma III.5.15]{Nak04} give
\begin{equation}\label{eq:103}
N_\sigma(K_X+\Delta;X/U)=p_{i*}N_\sigma\big(p_i^*(K_X+\Delta);W_i/U\big)
\end{equation}
and
\begin{equation}\label{eq:102}
N_\sigma(K_{X_i}+\Delta_i;X_i/U)=q_{i*}N_\sigma\big(q_i^*(K_{X_i}+\Delta_i);W_i/U\big).
\end{equation}
Then:
\begin{align*}
N_\sigma(K_{X_i}+\Delta_i;X_i/U)&=q_{i*}N_\sigma\big(q_i^*(K_{X_i}+\Delta_i);W_i/U\big) & \text{by \eqref{eq:102}}\\
&=q_{i*}\big(N_\sigma\big(q_i^*(K_{X_i}+\Delta_i);W_i/U\big)+E_i\big) & \\
&=\varphi_{i*}p_{i*}N_\sigma\big(q_i^*(K_{X_i}+\Delta_i)+E_i;W_i/U\big) & \text{by \eqref{eq:101}}\\
&=\varphi_{i*}p_{i*}N_\sigma\big(p_i^*(K_X+\Delta);W_i/U\big) & \text{by \eqref{eq:100}}\\
&=\varphi_{i*}N_\sigma(K_X+\Delta;X/U). & \text{by \eqref{eq:103}}
\end{align*}
Since $K_{X_i}+\Delta_i$ is in the movable cone over $U$ if and only if $N_\sigma(K_{X_i}+\Delta_i;X_i/U)=0$, we conclude by \cite[Theorem 2.3]{Fuj11b}.
\end{proof}

\section{Canonical bundle formula}\label{sec:cbf}

In this section we define the main object of this paper and state several properties we repeatedly use.

\begin{dfn}
{\rm Let $(X,\Delta)$ be a pair and let $\pi\colon X'\rightarrow X$ be a log resolution of the pair. A morphism $f \colon (X,\Delta) \rightarrow Y$ to a normal projective variety $Y$ is a \emph{klt-trivial}, respectively \emph{lc-trivial}, fibration if $f$ is a surjective morphism with connected fibres, $(X,\Delta)$ has klt, respectively log canonical, singularities over the generic point of $Y$, there exists a $\Q$-Cartier $\Q$-divisor $D$ on $Y$ such that
$$K_X+\Delta\sim_\Q f^*D,$$
and if $f'=f\circ\pi$, then
$$\rk f'_*\OO_{X'}\big(\lceil K_{X'}-\pi^*(K_X+\Delta)\rceil\big) = 1,$$
respectively
$$\textstyle \rk f'_*\OO_{X'}\big(\lceil K_{X'}-\pi^*(K_X+\Delta)+\sum_{a(E,X,\Delta)=-1} E\rceil\big) = 1.$$}
\end{dfn}

\begin{rem}
{\rm This last condition in the previous definition is verified, for instance, if $\Delta$ is effective on the generic fibre, which is the case in this paper.}
\end{rem}

\begin{dfn}\label{dfn:cbf}
{\rm Let $f\colon (X,\Delta)\to Y$ be an lc-trivial fibration, and let $P\subseteq Y$ be a prime divisor. The \emph{generic log canonical threshold} of $f^*P$ with respect to $(X,\Delta)$ is
$$\gamma_P=\sup\{t\in\R\mid (X,\Delta+tf^*P) \textrm{ is log canonical over } P\}.$$
The \emph{discriminant} of $f$ is
\begin{equation}\label{discriminant}
B_f=\sum_P(1-\gamma_P)P.
\end{equation}
This is a Weil $\Q$-divisor on $Y$, and it is effective if $\Delta$ is effective. Fix $\varphi\in\C(X)$ and the smallest positive integer $r$ such that $K_X + \Delta +\frac{1}{r}\ddiv\varphi = f^*D$. Then there exists a unique Weil $\Q$-divisor $M_f$, the \emph{moduli part} of $f$, such that
\begin{equation}\label{cbf}
K_X + \Delta +\frac{1}{r}\ddiv\varphi = f^*(K_Y+B_f+M_f).
\end{equation}
The formula \eqref{cbf} is the \emph{canonical bundle formula} associated to $f$ (and $\varphi$).}
\end{dfn}

\begin{rem}
{\rm It is customary in the literature to denote the discriminant and the moduli part by $B_Y$ and $M_Y$. We adopted a different notation, since we sometimes have to compare discriminants or moduli parts of different lc-trivial fibrations which have the same base $Y$. However, if the fibration is clear from the context, we still occasionally write $B_Y$ and $M_Y$.}
\end{rem}

\begin{rem}\label{bbir}
{\rm If $f_1\colon (X_1,\Delta_1)\to Y$ and $f_2\colon (X_2,\Delta_2)\to Y$ are two lc-trivial fibrations over the same base  which are crepant birational over $Y$, then $B_{f_1}=B_{f_2}$ and $M_{f_1}\sim_\Q M_{f_2}$, see \cite[Lemma 2.6]{Amb04}.}
\end{rem}

\begin{rem}\label{rem:redtolc}
{\rm Let $f\colon(X,\Delta)\to Y$ be an lc-trivial (respectively klt-trivial) fibration, where $Y$ is smooth. Then there exists a divisor $\Delta^\circ$ on $X$ such that the pair $(X,\Delta^\circ)$ is log canonical and $f\colon(X,\Delta^\circ)\to Y$ is an lc-trivial (respectively klt-trivial) fibration. Indeed, let $\pi\colon X'\to X$ be a log resolution of $(X,\Delta)$, and set $\Delta':=K_{X'}-\pi^*(K_X+\Delta)$. Let $\{P_i\}_{i\in I}$ be the finite set of prime divisors on $X'$ such that $\mult_{P_i}\Delta'>1$. Then each $P_i$ is a vertical divisor, and let $D_i$ be any prime divisor on $Y$ which contains $f\big(\pi(P_i)\big)$. Then the divisor
$$\Delta^\circ:=\Delta-\sum_{i\in I}(\mult_{P_i}\Delta')f^*D_i$$
is the desired divisor.}
\end{rem}

The canonical bundle formula satisfies several desirable properties. To start with, if $f \colon(X,\Delta)\to Y$ is a klt-trivial (respectively lc-trivial) fibration, if $\rho\colon Y'\to Y$ is a proper generically finite morphism, and if we consider a base change diagram
$$
\xymatrix{
(X',\Delta') \ar[r]^\tau \ar[d]_{f'} & (X,\Delta)\ar[d]^{f}\\
Y' \ar[r]_{\rho}&Y,
}
$$
where $X'$ is the normalisation of the main component of $X\times_Y Y'$ and $\Delta':=K_{X'}-\tau^*(K_X+\Delta)$, then $f' \colon(X',\Delta')\to Y'$ is also a klt-trivial (respectively lc-trivial) fibration. In the rest of the paper, we implicitly refer to this klt-trivial (respectively lc-trivial) fibration when writing $B_{Y'}$ and $M_{Y'}$ for the discriminant and moduli part.

\medskip

The following is the important \emph{base change property} from \cite[Theorem 0.2]{Amb04}, \cite[Theorem 2]{Kaw98} and \cite[Theorem 3.6]{FG14}:

\begin{thm}\label{nefness}
Let $f \colon(X,\Delta)\to Y$ be an lc-trivial fibration. Then there exists a proper birational morphism $Y'\to Y$ such that for every proper birational morphism $\pi \colon Y''\to Y'$ we have:\begin{enumerate}
\item[(i)] $K_{Y'}+B_{Y'}$ is a $\Q$-Cartier divisor and $K_{Y''}+B_{Y''}=\pi^*(K_{Y'}+B_{Y'})$,
\item[(ii)] $M_{Y'}$ is a nef $\Q$-Cartier divisor and $M_{Y''}=\pi^*M_{Y'}$.
\end{enumerate}
\end{thm}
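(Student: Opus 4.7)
The plan is to reduce to a geometrically controlled situation via crepant birational modifications, then handle the discriminant and the moduli part in turn.

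By Remark \ref{bbir}, the discriminant $B_Y$ is unchanged and the moduli part $M_Y$ is preserved up to $\Q$-linear equivalence under crepant birational modifications of $X$ over $Y$, so I may first replace $(X,\Delta)$ by a log resolution and assume $(X,\Delta)$ is log smooth. Next, combining flattening of $f$ with a Kawamata-type semistable reduction in codimension one, I construct a proper birational morphism $Y'\to Y$ with $Y'$ smooth, together with a log smooth model $f'\colon (X',\Delta')\to Y'$ induced from the normalised fibre product, such that the horizontal part of $\Delta'$ and the divisor $(f')^*B_{Y'}$ together have simple normal crossings support. In this model the generic log canonical threshold $\gamma_P$ along each prime divisor $P\subseteq Y'$ can be read off directly from the multiplicities of $(f')^*P$ inside $\Delta'$, putting $B_{Y'}$ into a combinatorial form.

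For (i), I take any further birational $\pi\colon Y''\to Y'$, form the normalised main component $X''$ of $X'\times_{Y'}Y''$ with induced morphisms $\tau\colon X''\to X'$ and $f''\colon X''\to Y''$, and set $\Delta'' := K_{X''}-\tau^*(K_{X'}+\Delta')$. The previous step reduces the computation of $B_{Y''}$ to a log canonical threshold calculation for pullbacks of simple normal crossings divisors under $\pi$; combining this with the ramification formulae comparing $K_{Y''}$ with $\pi^*K_{Y'}$ and $K_{X''}$ with $\tau^*K_{X'}$ yields the desired equality $K_{Y''}+B_{Y''} = \pi^*(K_{Y'}+B_{Y'})$. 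In particular $K_{Y'}+B_{Y'}$ is $\Q$-Cartier.

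For (ii), the canonical bundle formula $K_{X'}+\Delta'+\frac{1}{r}\ddiv\varphi = (f')^*(K_{Y'}+B_{Y'}+M_{Y'})$ pulls back under $\tau$ and, combined with (i), formally gives both the $\Q$-Cartierness of $M_{Y'}$ and the equality $M_{Y''}=\pi^*M_{Y'}$. The remaining non-formal statement is that $M_{Y'}$ is nef, and this is where the serious input enters. After the semistable reduction above, Fujita--Kawamata semipositivity applied to a suitable pushforward $f'_*\omega_{X'/Y'}\big(\lceil -(\Delta')^{<1}\rceil\big)$ provides a nef extension to $Y'$ of the Hodge bundles of the variation of Hodge structure on the smooth locus of $f'$; its first Chern class is $\Q$-linearly equivalent to $M_{Y'}$. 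The main obstacle is arranging the semistable reduction cleanly enough for the Hodge-theoretic positivity along the discriminant to translate into nef-ness of $M_{Y'}$, which is precisely the content of the cited works of Kawamata \cite{Kaw98}, Ambro \cite{Amb04} and Fujino--Gongyo \cite{FG14}.
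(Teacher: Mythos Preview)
The paper does not prove Theorem~\ref{nefness}; it is stated as a citation of \cite[Theorem~0.2]{Amb04}, \cite[Theorem~2]{Kaw98} and \cite[Theorem~3.6]{FG14}, with no argument given. Your proposal is therefore not competing with a proof in the paper but sketching the content of those references, which you yourself acknowledge in the final sentence.

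As a sketch of those proofs, your outline is broadly faithful: the reduction to a log smooth model, the use of semistable reduction in codimension one to tame the monodromy, the combinatorial identification of $B_{Y'}$, and the Hodge-theoretic input for nefness of $M_{Y'}$ are indeed the main ingredients. One imprecision worth flagging is the description of $M_{Y'}$ as the first Chern class of a pushforward sheaf $f'_*\omega_{X'/Y'}\big(\lceil -(\Delta')^{<1}\rceil\big)$: the actual identification goes through the canonical extension of the lowest piece of the Hodge filtration of a specific variation of Hodge structure (constructed from a cyclic cover trivialising $r(K_{X'}+\Delta')$ over the generic point), and the semipositivity statement one invokes is for that extended Hodge bundle rather than a naive pushforward. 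Your sketch elides this, but since you defer the ``serious input'' to the cited works this is acceptable for an outline.
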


In the context of the previous theorem, we say that \emph{the moduli part descends} to $Y'$, and we call any such $Y'$ an \emph{Ambro model} for $f$.

One of the reasons why base change property is important is the following \emph{inversion of adjunction}.

\begin{thm}\label{thm:invAdjunction}
Let $f\colon(X,\Delta)\rightarrow Y$ be an lc-trivial fibration, and assume that $Y$ is an Ambro model for $f$. Then $(Y,B_Y)$ has klt, respectively log canonical, singularities in a neighbourhood of a point $y\in Y$ if and only if $(X,\Delta)$ has klt, respectively log canonical, singularities in a neighbourhood of $f^{-1}(y)$.
\end{thm}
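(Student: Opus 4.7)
The plan is to combine the base change property of Theorem~\ref{nefness} with the very definition of the discriminant. Since $Y$ is an Ambro model, for every birational morphism $\pi\colon Y''\to Y$ one has $K_{Y''}+B_{Y''}=\pi^*(K_Y+B_Y)$, so that the discrepancies of $(Y,B_Y)$ along divisorial valuations are precisely $-\mult_{P''}B_{Y''}$ for prime divisors $P''$ on $Y''$. Hence $(Y,B_Y)$ is lc (respectively klt) in a neighbourhood of $y$ if and only if $\mult_{P''}B_{Y''}\leq 1$ (resp.\ $<1$) for every such $\pi$ and every prime $P''\subseteq Y''$ with $\pi(P'')\ni y$. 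By the definition of the discriminant applied to the base-changed lc-trivial fibration $f''\colon(X'',\Delta'')\to Y''$, this coefficient equals $1-\gamma_{P''}$, and the condition translates into $(X'',\Delta'')$ being lc (resp.\ klt) over the generic point of $P''$.

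The theorem then reduces to the equivalence: $(X,\Delta)$ is lc (resp.\ klt) in a neighbourhood of $f^{-1}(y)$ if and only if $(X'',\Delta'')$ is lc (resp.\ klt) over $\eta_{P''}$ for every $\pi\colon Y''\to Y$ and every prime $P''\subseteq Y''$ with $\pi(P'')\ni y$. The implication from $(X,\Delta)$ to the base-change condition is immediate: by properness of $f$ there is an open $V\ni y$ with $(X,\Delta)$ lc on $f^{-1}(V)$, whence $\eta_{\pi(P'')}\in V$, and lc-ness is preserved by the crepant base change $\tau\colon X''\to X$, giving $(X'',\Delta'')$ lc over $(f'')^{-1}(\eta_{P''})\subseteq\tau^{-1}f^{-1}(V)$.

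For the converse, I would take a divisorial valuation $F$ over $X$ with $y\in f(c_X(F))$ and show $a(F,X,\Delta)\geq-1$ (resp.\ $>-1$). If $F$ is horizontal, the (klt-)lc-trivial assumption over the generic point of $Y$ settles the claim. If $F$ is vertical with $W:=f(c_X(F))\subsetneq Y$, the plan is to construct a birational $\pi\colon Y''\to Y$ and a prime divisor $P''\subseteq Y''$ with $\pi(P'')=W$ such that, on the base change, $f''(c_{X''}(F))=P''$; then $F$ dominates $P''$, and the hypothesis combined with the crepant invariance $a(F,X'',\Delta'')=a(F,X,\Delta)$ yields the conclusion.

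The hard part will be this last construction when $W$ has codimension at least two. The plan is to start by blowing up a resolution of $W$ in $Y$ to obtain an exceptional prime divisor $P''$ with $\pi(P'')=W$, and, if the image $f''(c_{X''}(F))$ on the base change is strictly smaller than $P''$, to further blow up that image on $Y''$ and iterate. Termination of this process is to be secured either by Noetherian induction on dimensions of images, or by Hironaka's flattening theorem applied to a resolution of $X$ realising $F$ as a prime divisor; this final realisation draws on the base change material of Kawamata and Ambro cited earlier and is the most delicate technical step of the proof.
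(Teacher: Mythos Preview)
The paper does not prove this theorem: it simply cites \cite[Theorem~3.1]{Amb04} and remarks that the argument there, stated for klt-trivial fibrations, goes through verbatim for lc-trivial fibrations once one has the base change property of Theorem~\ref{nefness}. So there is no detailed proof here to compare against; what you have written is an attempt to reconstruct Ambro's argument.

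Your framework is the correct one and is exactly Ambro's: use Theorem~\ref{nefness} to identify discrepancies of $(Y,B_Y)$ with $-\mult_{P''}B_{Y''}=-(1-\gamma_{P''})$ on higher models, translate the lc/klt condition into a condition on $(X'',\Delta'')$ over the generic points of the $P''$, and reduce to showing that every bad vertical valuation $F$ on $X$ is ``visible'' over some prime divisor on a birational model of $Y$. The forward implication and the horizontal case are handled as you say.

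The gap is in your treatment of the remaining step. Neither of your suggested mechanisms is justified as written. For the iterated blowup, you would need to show that $\dim f''(c_{X''}(F))$ strictly increases at each stage, and this is not automatic; for flattening, you would need to explain why flatness of some model of $f$ forces the strict transform of $F$ to map onto a divisor, which it does not in general. The clean argument avoids both: restrict the divisorial valuation $v_F$ on $k(X)$ to $k(Y)$ via $f^*$. Since $f$ is dominant and $F$ is $f$-vertical, the restriction $v$ is a nontrivial rank-one valuation, and Abhyankar's inequality applied to the extension $k(X)/k(Y)$ gives
\[
\operatorname{tr.deg}\big(k(v_F)/k(v)\big)\ \leq\ \operatorname{tr.deg}\big(k(X)/k(Y)\big)=\dim X-\dim Y,
\]
hence $\operatorname{tr.deg}\big(k(v)/k\big)\geq(\dim X-1)-(\dim X-\dim Y)=\dim Y-1$, so $v$ is itself divisorial. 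Realise $v$ as $\operatorname{ord}_{P''}$ on some model $\pi\colon Y''\to Y$; a direct check on local rings shows that the generic point of $c_{X''}(F)$ maps to the generic point of $P''$, so $f''(c_{X''}(F))=P''$ and your argument concludes. This is the ingredient underlying Ambro's proof, and once it is in place your outline becomes a complete argument.
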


\begin{proof}
This is  \cite[Theorem 3.1]{Amb04}. This result is stated for klt-trivial fibrations, but the proof extends verbatim to the lc-trivial case by using Theorem \ref{nefness}.
\end{proof}

Theorem \ref{thm:invAdjunction} says something highly non-trivial: that in certain situations, on an Ambro model the \emph{local} log canonical thresholds in Definition \ref{dfn:cbf} are actually \emph{global} log canonical thresholds.

The following is \cite[Theorem 3.3]{Amb05a}.

\begin{thm}\label{ambro1}
Let $f\colon(X,\Delta)\to Y$ be a klt-trivial fibration between normal projective varieties such that $\Delta$ is effective over the generic point of $Y$. Then there exists a diagram
$$
\xymatrix{
(X,\Delta)\ar[d]_f && (X',\Delta')\ar[d]^{f'}\\
Y & W \ar[l]^{\tau}\ar[r]_{\tau'}&Y'
}
$$
such that:
\begin{enumerate}
\item[(i)] $f'\colon(X',\Delta')\rightarrow Y'$ is a klt-trivial fibration,
\item[(ii)] $\tau$ is generically finite and surjective, and $\tau'$ is surjective,
\item[(iii)] the moduli part $M_{f'}$ is big, and after a birational base change we may assume that $f$ and $f'$ are Ambro models and $\tau^*M_f=\tau'^*M_{f'}$.
\end{enumerate}
\end{thm}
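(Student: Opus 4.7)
My plan is to construct $Y'$ as a birational model of the Iitaka base of the moduli part $M_f$, and to produce $W$ as a generically finite cover of $Y$ that simultaneously trivialises the moduli along the fibres of this Iitaka fibration and allows the klt-trivial structure to descend.

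First I would invoke Theorem \ref{nefness} to replace $Y$ by an Ambro model, so that $M_f$ is nef and $\Q$-Cartier. If $M_f$ is already big then $W=Y=Y'$ works, and the statement is immediate. Otherwise, after a further birational modification of the base and a compatible modification of $X$, I would arrange that the Iitaka fibration of $M_f$ is a morphism $g\colon Y\to Z$ with $Z$ smooth projective, $\dim Z=\kappa(M_f)<\dim Y$, and $M_f$ numerically equivalent to the pullback of a big $\Q$-divisor on $Z$.

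Next comes the descent along a general fibre $F$ of $g$. By Theorem \ref{thm:invAdjunction} and Remark \ref{bbir}, the restriction $(X_F,\Delta_F)\to F$ is a klt-trivial fibration whose moduli part equals $M_f|_F$, and this is nef and numerically trivial on $F$. Kawamata's theorem in the numerically trivial setting then yields a generically finite cover of $F$ along which $M_f|_F$ becomes $\Q$-trivial. I would spread such covers out globally, by means of Kawamata's covering lemma, to produce a generically finite surjective morphism $\tau\colon W\to Y$ with the following feature: if $\tau'\colon W\to Y'$ denotes the Stein factorisation of $g\circ\tau$ with $Y'$ birational to $Z$, then the base-changed moduli divisor $M_W$ is of the form $(\tau')^*N$ for some nef $\Q$-divisor $N$ on $Y'$. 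Taking the relative canonical model of $X_W\to Y'$ and descending the canonical bundle formula along $\tau'$ then produces the desired klt-trivial fibration $f'\colon(X',\Delta')\to Y'$ with $M_{f'}\sim_\Q N$. This $M_{f'}$ is big because $\dim Y'=\kappa(M_f)$, and the equality $\tau^*M_f=(\tau')^*M_{f'}$ on a common Ambro model is an instance of the base-change statement in Theorem \ref{nefness}.

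The hard part will be the construction of $\tau$: the torsion of $M_f$ restricted to fibres of $g$, and the monodromy of $f$ around the horizontal part of the discriminant, must be cleared simultaneously, without destroying the klt-trivial structure or the effectivity of $\Delta$ over the generic point of $Y$. This is where Kawamata's covering construction has to be applied carefully enough that the base-change invariance of the canonical bundle formula in Theorem \ref{nefness} can be used to identify $M_{f'}$ unambiguously on $Y'$.
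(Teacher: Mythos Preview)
The paper does not prove this statement; it simply quotes it as \cite[Theorem 3.3]{Amb05a}. So there is no proof in the paper to compare against. That said, your proposal has a genuine gap that is worth naming.

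The circularity is in the sentence where you ``arrange that \ldots\ $M_f$ is numerically equivalent to the pullback of a big $\Q$-divisor on $Z$.'' Making the Iitaka fibration $g\colon Y\to Z$ into a morphism is routine, but making $M_f$ \emph{numerically} a pullback from $Z$ is exactly the assertion that $\nu(M_f)=\kappa(M_f)$, i.e.\ that $M_f$ is nef and good. This is the essential content of the theorem, not a preliminary reduction: once you know $M_f$ is good, the existence of the diagram with $M_{f'}$ big follows rather formally, and conversely the diagram immediately implies goodness. Your later step, where you restrict to a fibre $F$ of $g$ and claim $M_f|_F$ is numerically trivial, inherits the same problem. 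A priori, on a general Iitaka fibre one only has $\kappa(M_f|_F)=0$; there is no mechanism in your outline that forces $\nu(M_f|_F)=0$.

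Ambro's actual argument in \cite{Amb05a} supplies the missing input through Hodge theory. The moduli part is identified, up to base change, with the lowest piece of the Hodge filtration of the variation of Hodge structure attached to the fibration, and the diagram is built from the period map: $Y'$ is (a compactification of) the image of the period map, $(X',\Delta')\to Y'$ comes from a tautological family over that image, and bigness of $M_{f'}$ is a consequence of Griffiths' curvature computations for Hodge bundles. The equality $\tau^*M_f=\tau'^*M_{f'}$ then follows from the functoriality of the Hodge-theoretic description. None of this is visible from the birational side alone; in particular, covering tricks and relative canonical models will not produce the required control on $\nu(M_f)$ without some transcendental input.

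A secondary issue: the step ``taking the relative canonical model of $X_W\to Y'$ and descending the canonical bundle formula along $\tau'$'' is too vague to be a construction of $f'$. Existence of the relative canonical model over $Y'$ is not automatic, and even granting it, you would need to check that the resulting map is a klt-trivial fibration with the correct moduli part; this does not follow from Theorem~\ref{nefness} alone.
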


The following notions and the lemma will be used in the proof of Theorem \ref{KLTimplLC} and in our main technical results, Propositions \ref{pro:step1aa} and \ref{pro:step1ab}.

\begin{dfn}
{\rm Let  $f\colon(X, \Delta)\rightarrow Y$ be an lc-trivial fibration, where $(X,\Delta)$ is log smooth and $Y$ is smooth. Fix a prime divisor $T$ on $Y$. An \emph{$(f,T)$-bad divisor} is any reduced divisor $\Sigma_{f,T}$ on $Y$ which contains:
\begin{enumerate}
\item[(a)] the locus of critical values of $f$,
\item[(b)] the closed set $f(\Supp\Delta_v)\subseteq Y$, and
\item[(c)] the set $\Supp B_f\cup T$.
\end{enumerate}}
\end{dfn}

\begin{rem}
{\rm Clearly the set $\Sigma_{f,T}$ is not uniquely determined. It will be clear from the context which precise set we consider in the proofs below.}
\end{rem}

\begin{dfn}\label{dfn:acceptable}
{\rm Let $f\colon (X,\Delta)\to Y$ be an lc-trivial (respectively klt-trivial) fibration. Then $f$ is \emph{acceptable} if there exists another lc-trivial (res\-pec\-ti\-ve\-ly klt-trivial) fibration $\bar f\colon (\overline X,\overline \Delta)\to Y$ such that $\overline \Delta$ is effective on the generic fibre of $\bar f$, and a birational morphism $\mu\colon X\to\overline X$ such that $f=\bar f\circ\mu$ and $K_X+\Delta\sim_\Q\mu^*(K_{\overline X}+\overline \Delta)$. Note that then the horizontal part of $\Delta^{<0}$ with respect to $f$ is $\mu$-exceptional. Note also that any birational base change of an acceptable lc-trivial (respectively klt-trivial) fibration is again an acceptable lc-trivial (respectively klt-trivial) fibration.
$$
\xymatrix{
(X,\Delta)\ar[dr]_f \ar[r]^\mu & (\overline X,\overline \Delta)\ar[d]^{\bar f}\\
& Y
}
$$}
\end{dfn}

\begin{lem}\label{relSNC}
Let  $f\colon(X, \Delta)\rightarrow Y$ be an acceptable lc-trivial (respectively klt-trivial) fibration, where $(X,\Delta)$ is log smooth and $Y$ is a smooth Ambro model for $f$. Fix a prime divisor $T$ on $Y$. Then there exists a birational morphism $\alpha\colon Y'\to Y$ from a smooth variety $Y'$ and a commutative diagram
$$
\xymatrix{
X\ar[d]_{f} & X_\nu \ar[l]^{\alpha_\nu} \ar[d]_{f_\nu} & X' \ar[ld]^{f'}\ar[l]^{\ \beta}\\
Y & Y', \ar[l]_\alpha&
}
$$
such that $X'$ is a smooth variety, $X_\nu$ is the normalisation of the main component of $X\times_{Y} Y'$, $\beta$ is birational and, if $\Delta'$ is defined by $K_{X'}+\Delta'=\pi^*(K_X+\Delta)$ and $T'=\alpha_*^{-1}T$, then:
\begin{enumerate}[label=(\alph*)]
\item \label{a} $f'\colon (X',\Delta')\to Y'$ is an acceptable lc-trivial (respectively klt-trivial) fibration,
\item \label{b} there exists an $(f',T')$-bad divisor $\Sigma_{f',T'}\subseteq Y'$ which has simple normal crossings, and
\item \label{c} the divisor $\Delta'+f'^*\Sigma_{f',T'}$ has simple normal crossings support.
\end{enumerate}

Moreover, we may define $\alpha$ as an embedded resolution of any $(f,T)$-bad divisor $\Sigma_{f,T}$ in $Y$, and we may assume that $\beta$ is an isomorphism away from $\Supp(f_\nu^*\alpha^*\Sigma_{f,T})$.
\end{lem}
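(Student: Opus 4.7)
The plan is to build the diagram in three stages: fix a bad divisor downstairs and resolve it, perform the base change to the resolution and take the normalisation of the main component, and finally take a log resolution upstairs. First, I would choose $\Sigma_{f,T}$ to be the reduced sum of the critical locus of $f$, the image $f(\Supp\Delta_v)$, $\Supp B_f$, and $T$, so that it is an $(f,T)$-bad divisor by construction. Then I would let $\alpha\colon Y'\to Y$ be an embedded resolution of $(Y,\Sigma_{f,T})$ in the strong sense recalled in Subsection \ref{subsec:pairs}. Since $Y$ is already smooth, $\alpha^*\Sigma_{f,T}$ will have simple normal crossings support and $\alpha$ will be an isomorphism over the SNC locus of $\Sigma_{f,T}$; in particular, $\Exc(\alpha)\subseteq\Supp\alpha^*\Sigma_{f,T}$.

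Next, I would form $X_\nu$ as in the statement and let $\beta\colon X'\to X_\nu$ be an embedded resolution of the pair $(X_\nu,\Delta_\nu+f_\nu^*\alpha^*\Sigma_{f,T})$, where $\Delta_\nu$ is defined by the crepant pullback $K_{X_\nu}+\Delta_\nu=\alpha_\nu^*(K_X+\Delta)$. The key observation is that over the open set $U:=Y'\setminus\Supp\alpha^*\Sigma_{f,T}$ the morphism $\alpha$ is an isomorphism and $f$ is smooth with $(X,\Delta)$ log smooth, so $X_\nu$ is already smooth above $U$ with $\Delta_\nu$ of SNC support there; hence $\beta$ will be an isomorphism over $f_\nu^{-1}(U)$, which gives the final ``moreover'' statement. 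Setting $f':=f_\nu\circ\beta$, $T':=\alpha_*^{-1}T$, and $\Delta'$ by crepant pullback through $\alpha_\nu\circ\beta$, part \ref{c} will hold by the very choice of $\beta$, and acceptability in part \ref{a} will follow from Definition \ref{dfn:acceptable} together with the stability of acceptability under birational base change and further log resolution, applied to the factorisation $\mu\colon X\to\overline{X}$ witnessing that $f$ is acceptable.

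The technical heart of the argument is part \ref{b}: I would take $\Sigma_{f',T'}:=(\alpha^*\Sigma_{f,T})_\red$ and verify it is $(f',T')$-bad. Containment of $T'$ and of the critical values of $f'$ is immediate from the smoothness of $f'$ over $U$ established above, and a similar local analysis handles $f'(\Supp\Delta'_v)$ because $\Delta'$ coincides with the pullback of $\Delta$ over $U$, whose vertical part maps into $f(\Supp\Delta_v)\subseteq\Sigma_{f,T}$. The hard part, which I expect to be the main obstacle, is controlling the new discriminant $B_{f'}$. For this I would invoke Theorem \ref{nefness}(i): since $Y$ is already an Ambro model, we have $K_{Y'}+B_{Y'}=\alpha^*(K_Y+B_f)$, and therefore $\Supp B_{f'}\subseteq\alpha^{-1}(\Supp B_f)\cup\Exc(\alpha)\subseteq\Supp\alpha^*\Sigma_{f,T}$, as required.
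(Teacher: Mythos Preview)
Your proposal is correct and follows essentially the same approach as the paper: take an embedded resolution $\alpha$ of $(Y,\Sigma_{f,T})$, then an embedded resolution $\beta$ of $(X_\nu,\Delta_\nu+f_\nu^*\alpha^*\Sigma_{f,T})$, set $\Sigma_{f',T'}=\alpha^{-1}(\Sigma_{f,T})$, and use the Ambro model property $K_{Y'}+B_{f'}=\alpha^*(K_Y+B_f)$ to control $\Supp B_{f'}$. The only cosmetic difference is that the paper starts from an arbitrary $(f,T)$-bad divisor (matching the ``moreover'' clause) rather than the minimal one, and it handles the vertical part of $\Delta'$ by splitting into the $\alpha'$-exceptional and non-exceptional cases, which amounts to the same observation you make via the open set $U$.
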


\begin{proof}
Let $\Sigma_{f,T}$ be an $(f,T)$-bad divisor on $Y$. Let $\alpha\colon Y'\to Y$ be an embedded resolution of the pair $(Y,\Sigma_{f,T})$ as in \S\ref{subsec:pairs}; in particular, $\alpha$ is an isomorphism away from $\Sigma_{f,T}$. Define a divisor $\Delta_\nu$ on $X_\nu$ by
$$K_{X_\nu}+\Delta_\nu=\alpha_\nu^*(K_X+\Delta).$$
Define $\beta\colon X'\to X_\nu$ as an embedded resolution of the pair $\big(X_\nu,\Delta_\nu+f_\nu^*\alpha^*\Sigma_{f,T}\big)$, so in particular, it is an isomorphism away from $\Supp(f_\nu^*\alpha^*\Sigma_{f,T})$.

If $\bar f\colon (\overline{X},\overline{\Delta})\to Y$ is a fibration as in Definition \ref{dfn:acceptable}, then $f'$ factors through the normalisation $\overline{X}'$ of the main component of the fibre product $\overline{X}\times_Y Y'$. Let $\bar f'\colon \overline X'\to Y$ and $\overline \alpha'\colon \overline X'\to \overline X$ be the induced morphisms. If we set $\overline \Delta':=K_{\overline X'}-\overline\alpha'^*(K_{\overline X}+\overline{\Delta})$, then $\overline \Delta'$ is effective on the generic fibre of $\bar f'$. Thus, $f'$ is acceptable, which shows \ref{a}.

We claim that $\Sigma_{f',T'}:=\alpha^{-1}(\Sigma_{f,T})$ is the desired $(f',T')$-bad divisor; the claim clearly implies \ref{b} and \ref{c} by construction.

To show the claim, clearly the set $\alpha^{-1}(\Sigma_{f,T})$ contains the non-smooth locus of $f'$ and the divisor $T'$. Since $Y$ is an Ambro model, we have $K_{Y'}+B_{f'}=\alpha^*(K_Y+B_f)$, hence
$$\Supp B_{f'}\subseteq\alpha^{-1}(\Supp B_f)\cup\Exc\alpha\subseteq\alpha^{-1}(\Sigma_{f,T}).$$

Now, denote $\alpha':=\alpha_\nu\circ\beta\colon X'\to X$. Let $D'$ be a component of the vertical part of $\Delta'$ with respect to $f'$. It remains to show that
\begin{equation}\label{eq:1000}
f'(D')\subseteq\Sigma_{f',T'}.
\end{equation}

To this end, if $D'$ is not $\alpha'$-exceptional, then $D:=\alpha'(D')$ is a component of $\Delta_v$. Thus, $f(D)=\alpha(f'(D'))$ belongs to $\Sigma_{f,T}$ by definition, which implies \eqref{eq:1000}. If $D'$ is $\alpha'$-exceptional, then, since $\alpha'$ is an isomorphism away from $f^{-1}(\Sigma_{f,T})$, we have $\alpha'(D')\subseteq f^{-1}(\Sigma_{f,T})$. Therefore, $f(\alpha'(D'))=\alpha(f'(D'))$ is a subset of $\Sigma_{f,T}$, which gives \eqref{eq:1000} and finishes the proof.
\end{proof}

\section{Semiampleness on divisorial valuations} \label{sec:5and6}

In this section we prove the main technical results of this paper, Propositions \ref{pro:step1aa} and \ref{pro:step1ab}. At the end of the section, we then deduce Theorems \ref{thm:main} and \ref{thm:bplus}, as well as Corollaries \ref{cor:surfaces} and \ref{cor:threefolds}.

\begin{lem}\label{lem:mincentreacceptable}
Let  $f\colon(X, \Delta)\rightarrow Y$ be an lc-trivial fibration, where $(X,\Delta)$ is a log smooth log canonical pair and $Y$ is a smooth Ambro model for $f$. Fix a prime divisor $T$ on $Y$. Assume that there exists an $(f,T)$-bad divisor $\Sigma_{f,T}\subseteq Y$ which has simple normal crossings, and such that the divisor $\Delta+f^*\Sigma_{f,T}$ has simple normal crossings support. Denote
$$\Delta_X=\Delta+\sum_{\Gamma\subseteq \Sigma_{f,T}}\gamma_{\Gamma}f^*\Gamma,$$
where $\gamma_\Gamma$ are the generic log canonical thresholds with respect to $f$ as in Definition \ref{dfn:cbf}. Then $(X,\Delta_X)$ is a log smooth log canonical pair and there exists a smooth minimal log canonical centre of $(X,\Delta_X)$ over $T$.
\end{lem}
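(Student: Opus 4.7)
The plan is to upgrade $f\colon(X,\Delta)\to Y$ to an lc-trivial fibration $f\colon(X,\Delta_X)\to Y$ whose discriminant equals $\Sigma_{f,T}$, apply the inversion of adjunction (Theorem~\ref{thm:invAdjunction}) to deduce log canonicity of $(X,\Delta_X)$, and then to extract a smooth minimal log canonical centre over $T$ directly from the definition of $\gamma_T$.

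For log smoothness and log canonicity of $(X,\Delta_X)$: since the coefficient of $\Gamma\subseteq\Sigma_{f,T}$ in $\Sigma_{f,T}-B_f$ is $1-(1-\gamma_\Gamma)=\gamma_\Gamma$, I would rewrite
\[\Delta_X=\Delta+f^*(\Sigma_{f,T}-B_f),\qquad K_X+\Delta_X\sim_\Q f^*(K_Y+\Sigma_{f,T}+M_f).\]
The support of $\Delta_X$ is contained in $\Supp(\Delta+f^*\Sigma_{f,T})$, which is SNC by hypothesis, so $(X,\Delta_X)$ is log smooth. The morphism $f\colon(X,\Delta_X)\to Y$ is an lc-trivial fibration, the rank condition and log canonicity over the generic point of $Y$ being inherited from $f\colon(X,\Delta)\to Y$ since $\Delta_X=\Delta$ on the generic fibre. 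A direct base change computation shows that on every birational model $\alpha\colon Y'\to Y$ the new boundary on the normalised fibre product is $\Delta'_X=\Delta'+f'^*\alpha^*(\Sigma_{f,T}-B_f)$, whence, using the Ambro model property of $Y$ for $(X,\Delta)\to Y$,
\[K_{Y'}+B_{f',\Delta'_X}=\alpha^*(K_Y+\Sigma_{f,T})\quad\text{and}\quad M_{f',\Delta'_X}\sim_\Q\alpha^*M_f.\]
Hence $Y$ is also an Ambro model for $f\colon(X,\Delta_X)\to Y$, and the discriminant of this new fibration equals $\Sigma_{f,T}$. Applying Theorem~\ref{thm:invAdjunction}, $(X,\Delta_X)$ is log canonical if and only if $(Y,\Sigma_{f,T})$ is, and the latter holds because $Y$ is smooth and $\Sigma_{f,T}$ is SNC.

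For the minimal log canonical centre over $T$: since $T\subseteq\Sigma_{f,T}$, the very definition of $\gamma_T$ produces a geometric valuation $\tilde E$ over $X$ with $f(c_X(\tilde E))=T$ and $a(\tilde E,X,\Delta+\gamma_T f^*T)=-1$. Because $\Delta_X-(\Delta+\gamma_T f^*T)=\sum_{\Gamma\neq T}\gamma_\Gamma f^*\Gamma$ is effective, $a(\tilde E,X,\Delta_X)\leq-1$, and log canonicity of $(X,\Delta_X)$ then forces equality; thus $c_X(\tilde E)$ is a log canonical centre of $(X,\Delta_X)$ dominating $T$. By log smoothness every log canonical centre of $(X,\Delta_X)$ is a stratum of $\Delta_X^{=1}$ (applying Proposition~\ref{pro:fujino} to the dlt pair $(X,\Delta_X^{\geq0})$, whose coefficient-one locus coincides with $\Delta_X^{=1}$), and is thus smooth. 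Any minimal such centre dominating $T$ yields the desired smooth minimal log canonical centre.

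The step I expect to require the most care is the verification that $Y$ is an Ambro model for the modified fibration $f\colon(X,\Delta_X)\to Y$. The key identity is $\gamma_{P',\Delta'_X}=\gamma_{P',\Delta'}-\mathrm{coeff}_{P'}\alpha^*(\Sigma_{f,T}-B_f)$ for every prime $P'\subseteq Y'$, which follows by restricting $\Delta'_X+tf'^*P'$ to the generic point of $P'$; summing over $P'$ and applying the Ambro model hypothesis on $Y$ for the original fibration then yields the displayed identities above.
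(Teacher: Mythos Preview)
Your proof is correct and follows essentially the same approach as the paper: both establish the canonical bundle formula $K_X+\Delta_X\sim_\Q f^*(K_Y+\Sigma_{f,T}+M_f)$, apply inversion of adjunction (Theorem~\ref{thm:invAdjunction}) to deduce log canonicity from that of $(Y,\Sigma_{f,T})$, and then read off the smoothness of log canonical centres from the log smooth structure. The only notable difference is that you explicitly verify $Y$ remains an Ambro model for $f\colon(X,\Delta_X)\to Y$ (a point the paper leaves implicit but which is indeed required for Theorem~\ref{thm:invAdjunction}), and you produce the log canonical centre over $T$ directly from the definition of $\gamma_T$ rather than via the ``not klt over $T$'' direction of inversion of adjunction---these are minor variations on the same argument.
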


\begin{proof}
We first note that
$$\textstyle K_X+\Delta_X\sim_\Q f^*(K_Y+\Sigma_{f,T}+M_f),$$
and the pair $(Y,\Sigma_{f,T})$ is log canonical since $\Sigma_{f,T}$ is a simple normal crossings divisor. Therefore, by Theorem \ref{thm:invAdjunction}, the pair $(X,\Delta_X)$ is log canonical, and it is not klt over each prime divisor $\Gamma\subseteq\Sigma_{f,T}$. Moreover, the support of $\Delta_X$ has simple normal crossings by assumption.

Thus, there exists a component $D$ of $f^*T$ which dominates $T$ and which has coefficient $1$ in $\Delta_X$. In other words, there exists a log canonical centre of $(X,\Delta_X)$ which dominates $T$. In particular, there exists a minimal log canonical centre of $(X,\Delta_X)$ over $T$. Since all log canonical centres of $(X,\Delta_X)$ are connected components of intersections of components of $\Delta_X^{=1}$, they are all smooth.
\end{proof}

\begin{pro}\label{pro:step1aa}
Let  $f\colon(X, \Delta)\rightarrow Y$ be an acceptable lc-trivial fibration, where $(X,\Delta)$ is a log smooth log canonical pair and $Y$ is a smooth Ambro model for $f$. Fix a prime divisor $T$ on $Y$. Assume that there exists an $(f,T)$-bad divisor $\Sigma_{f,T}\subseteq Y$ which has simple normal crossings, and such that the divisor $\Delta+f^*\Sigma_{f,T}$ has simple normal crossings support. Denote
$$\Delta_X=\Delta+\sum_{\Gamma\subseteq \Sigma_{f,T}}\gamma_{\Gamma}f^*\Gamma,$$
where $\gamma_\Gamma$ are the generic log canonical thresholds with respect to $f$ as in Definition \ref{dfn:cbf}. Denote
$$\Xi_T:=(\Sigma_{f,T}-T)|_T.$$
Let $S$ be a minimal log canonical centre of $(X, \Delta_X)$ over $T$, which exists by Lemma \ref{lem:mincentreacceptable}. Let
$$f|_S\colon S\overset{h}{\longrightarrow} T'\overset{\tau}{\longrightarrow} T$$
be the Stein factorisation, and let $R$ denote the ramification divisor of $\tau$ on $T'$. Then:
\begin{enumerate}
\item[(i)] if $K_S+\Delta_S=(K_X+ \Delta_X)|_S$, then $h\colon (S, \Delta_S)\rightarrow T'$ is a klt-trivial fibration with $B_h\geq0$,
\item[(ii)] $\tau^*(M_f|_T)\sim_\Q M_h+R'+E$, where $M_f$ is chosen so that $T\not\subseteq M_f$ and
$$R'=\sum\limits_{\Gamma\not\subseteq \tau^{-1}(\Xi_T)}(\mult_\Gamma R)\cdot \Gamma\quad\text{and}\quad E=\sum\limits_{\Gamma\not\subseteq\tau^{-1}(\Xi_T)}(\mult_\Gamma B_h)\cdot\Gamma.$$
\end{enumerate}
\end{pro}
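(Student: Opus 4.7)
The plan is to relate the canonical bundle formulas of $f$ and $h$ via adjunction and Hurwitz, and then to compare moduli parts. First I would rewrite the canonical bundle formula for $f$: since $\Supp B_f\subseteq\Sigma_{f,T}$ and $B_f=\sum_P(1-\gamma_P)P$, summing over all components $\Gamma$ of $\Sigma_{f,T}$ gives the identity $B_f+\sum_{\Gamma\subseteq\Sigma_{f,T}}\gamma_\Gamma\Gamma=\Sigma_{f,T}$, so the formula $K_X+\Delta\sim_\Q f^*(K_Y+B_f+M_f)$ becomes
\[ K_X+\Delta_X\sim_\Q f^*(K_Y+\Sigma_{f,T}+M_f). \]
By Proposition \ref{pro:fujino} the stratum $S$ of the SNC divisor $\Delta_X^{=1}$ is smooth, and adjunction for the log smooth pair $(Y,\Sigma_{f,T})$ with $T$ as a component gives $(K_Y+\Sigma_{f,T})|_T=K_T+\Xi_T$. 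Using $f|_S=\tau\circ h$ and the Hurwitz formula $\tau^*K_T=K_{T'}-R$, restriction to $S$ yields
\[ K_S+\Delta_S\sim_\Q h^*\bigl(K_{T'}-R+\tau^*\Xi_T+\tau^*(M_f|_T)\bigr). \quad (\star) \]

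For part (i), I would verify the klt-trivial property of $h$ piece by piece. Connected fibres come from Stein factorisation. Log canonicity of $(S,\Delta_S)$ follows by adjunction from $(X,\Delta_X)$, which is log canonical by Lemma \ref{lem:mincentreacceptable}. The klt property over the generic point of $T'$ is a consequence of the minimality of $S$: any log canonical centre of $(S,\Delta_S)$ dominating $T'$ would lift to a strictly smaller lc centre of $(X,\Delta_X)$ dominating $T$, contradicting minimality. The rank condition and the non-negativity $B_h\geq 0$ would be deduced from the acceptable structure of $f$ (which makes the negative part of $\Delta$ birationally exceptional) together with the fact that the vertical coefficients of $\Delta_X$ along components over $\Sigma_{f,T}$ are non-negative by the construction $\Delta_X=\Delta+\sum\gamma_\Gamma f^*\Gamma$.

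For part (ii), the canonical bundle formula for $h$ reads $K_S+\Delta_S\sim_\Q h^*(K_{T'}+B_h+M_h)$; combining with $(\star)$ and using that $h^*$ is injective on $\Q$-linear equivalence classes (which follows from $h_*\OO_S=\OO_{T'}$) yields
\[ \tau^*(M_f|_T)\sim_\Q M_h+B_h+R-\tau^*\Xi_T. \]
Decompose $B_h=E+B_h''$ and $R=R'+R''$ according to whether the components lie outside or inside $\tau^{-1}(\Xi_T)$; the target identity $\tau^*(M_f|_T)\sim_\Q M_h+R'+E$ then reduces to the purely numerical equality of divisors $B_h''+R''=\tau^*\Xi_T$ supported on $\tau^{-1}(\Xi_T)$. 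For a prime $\Gamma\subseteq T'$ with $\tau(\Gamma)$ contained in some component $P\neq T$ of $\Sigma_{f,T}$, Hurwitz gives $\mult_\Gamma R=e_\Gamma-1$ and $\mult_\Gamma\tau^*\Xi_T=e_\Gamma$, so the whole claim boils down to showing $\mult_\Gamma B_h=1$.

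The main obstacle is precisely this last identity. I expect to argue as follows: by definition of $\gamma_P$, some prime divisor $D$ of $f^{-1}(P)$ has coefficient exactly $1$ in $\Delta_X$; the simple normal crossings structure of $\Delta+f^*\Sigma_{f,T}$, the description of $S$ as a stratum of $\Delta_X^{=1}$ lying over $T$, and the non-emptiness of $T\cap P\supseteq\tau(\Gamma)$, together should allow me to restrict $D$ transversally to $S$ and produce a coefficient-$1$ component of $\Delta_S$ mapping onto $\Gamma$. This forces $\gamma^h_\Gamma=0$, hence $\mult_\Gamma B_h=1$ (the upper bound $\mult_\Gamma B_h\leq 1$ being automatic from log canonicity of $(S,\Delta_S)$), finishing the proof.
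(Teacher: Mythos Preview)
Your overall framework—rewriting the canonical bundle formula, adjunction to $T$ and $S$, Hurwitz, and then reducing part (ii) to the single claim $\mult_\Gamma B_h=1$ for every prime $\Gamma\subseteq\tau^{-1}(\Xi_T)$—matches the paper exactly. The gap is in how you justify that last claim, and also in your argument for $B_h\geq0$.

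For $\mult_\Gamma B_h=1$: you propose to take \emph{the} component $D$ of $f^{-1}(P)$ achieving the threshold $\gamma_P$ (so $\mult_D\Delta_X=1$), restrict it to $S$, and obtain a coefficient-$1$ component of $\Delta_S$ lying over $\Gamma$. But nothing guarantees that this particular $D$ meets $S$ at all, let alone that some component of $D\cap S$ maps under $h$ onto the specific prime $\Gamma\subseteq T'$. The SNC hypothesis only tells you that strata meet cleanly \emph{when they meet}; it does not force $D$ to intersect the chosen stratum $S$, nor does it single out the right sheet of $\tau$ over $\tau(\Gamma)$. The other components of $f^{-1}(P)$ may well have coefficient strictly below $1$ in $\Delta_X$, and it could be one of those that actually meets $S$ above $\Gamma$.

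The paper resolves this by running a $(K_X+\Delta_X^{\geq0}+\varepsilon G)$-MMP over $Y$, where $G$ packages $\Supp\Delta_{X,h}^{<0}\cup\Supp\Delta_{X,v}^{<1}$ together with all components of $f^*\Sigma_{f,T}$ not appearing in $\Delta_X$; one checks $G$ is a $(\mu,\bar f)$-EWE divisor, so by Proposition~\ref{pro:EWE} and Lemma~\ref{lem:druel} the MMP terminates and contracts precisely $G$. On the resulting model $(W,\Delta_W)$ one has $\Delta_W\geq0$, the vertical part $\Delta_{W,v}$ is reduced, and—this is the key gain—\emph{every} component of $\psi^*\Sigma_{f,T}$ lies in $\Delta_W^{=1}$, not just the threshold-computing one. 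The strict transform $S_W$ of $S$ is still a minimal lc centre with the same Stein factorisation (Lemma~\ref{lem:1to1} plus rigidity), and successive adjunction through the components of $\Delta_W^{=1}$ containing $S_W$ then yields $(h_W^*\Xi_T)_{\red}\leq\Delta_{S_W}^{=1}$, forcing $\gamma^{h_W}_\Gamma=0$ for every $\Gamma\subseteq\tau^{-1}(\Xi_T)$. Since $B_{h_W}=B_h$ (crepant birational over $T'$), this gives the claim.

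The same MMP also handles your other loose end: since $\Delta_W\geq0$ we get $\Delta_{S_W}\geq0$, and hence $B_h=B_{h_W}\geq0$. Your direct appeal to the acceptable structure does not work as stated: acceptability controls only the \emph{horizontal} negative part of $\Delta$, while the vertical part $\Delta_{X,v}$ may genuinely have coefficients $<0$ (adding $\gamma_\Gamma f^*\Gamma$ need not bring them up to zero), and these can restrict to negative coefficients in $\Delta_S$.
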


\begin{proof}
\emph{Step 1.}
As in the proof of Lemma \ref{lem:mincentreacceptable}, we first note that
\begin{equation}\label{eq:6}
\textstyle K_X+\Delta_X\sim_\Q f^*\big(K_Y+\Sigma_{f,T}+M_f\big),
\end{equation}
and that $(X,\Delta_X)$ is a log smooth log canonical pair. By restricting the equation \eqref{eq:6} to $S$ we obtain
$$K_S+\Delta_S\sim_\Q(f|_S)^*(K_T+\Xi_T+M_f|_T),$$
hence $h\colon(S,\Delta_S)\to T'$ is an lc-trivial fibration, and moreover, it is a klt-trivial fibration. Indeed, if there existed a log canonical centre $\Theta$ of $(S,\Delta_S)$ which dominated $T'$, then $\Theta$ would be a log canonical centre of $(X,\Delta_X)$ by Proposition \ref{pro:fujino}, which contradicts the minimality of $S$. This shows the first part of  (i).

\medskip

\emph{Step 2.}
Let $\mathcal{G}$ be the set of all components $P$ of $f^*\Sigma_{f,T}$ with $\mult_P\Delta_X=0$ and denote
$$G:=\Supp\Delta_{X,h}^{<0}\cup\Supp\Delta_{X,v}^{<1}\cup\bigcup_{P\in\mathcal G}P\,;$$
we consider $G$ as a reduced divisor on $X$. Let $\mu\colon X\to\overline X$ and $\bar f\colon\overline{X}\to Y$ be the maps given by Definition \ref{dfn:acceptable}. We denote by $\Delta_{X,h}$ and $\Delta_{X,v}$ the horizontal and vertical parts of $\Delta_X$ with respect to $f$.

We claim that the divisor $G$ is a $(\mu,\bar f)$-EWE divisor.

Indeed, by our hypothesis, the divisor $\Supp\Delta_{X,h}^{<0}=\Supp\Delta_{h}^{<0}$ is $\mu$-exceptional. Now, pick a prime divisor $P\subseteq \Supp\Delta_{X,v}^{<1}$. If $f(P)$ has codimension 2 in $Y$, then $P$ is $f$-exceptional. If $Q:=f(P)$ has codimension 1, then $Q\subseteq\Sigma_{f,T}$. By the definition of $\Delta_X$, there exists a prime divisor $E\subseteq f^{-1}(Q)$ dominating $Q$ such that $\mult_E\Delta_X=1$. In particular, $E\not\subseteq\Supp\Delta_{X,v}^{<1}$ and $f(E)=Q$. This shows that $\Supp \Delta_{X,v}^{<1}$ is weakly exceptional over $Y$ and finishes the proof of the claim.

\medskip

\emph{Step 3.}
Pick $\varepsilon\in\Q$ with $0<\varepsilon \ll 1$ such that the pair $(X, \Delta_X^{\geq 0}+\varepsilon G)$ is dlt and denote $F=-\Delta_X^{\leq 0}+\varepsilon G$. Note that
$$\textstyle K_X+\Delta_X^{\geq 0}+\varepsilon G\sim_{\Q,Y} F\geq 0.$$
By Step 2 and by Proposition \ref{pro:EWE} we have
\begin{equation}\label{continuity2}
N_\sigma(F;X/Y)=F.
\end{equation}
We run the $(K_X+\Delta_X^{\geq 0}+\varepsilon G)$-MMP with scaling of an ample divisor over $Y$. By \eqref{continuity2} and by Lemma \ref{lem:druel} this MMP terminates and contracts all the components of $F$. Let $\rho\colon X\dashrightarrow W$ be the resulting birational contraction and let $\psi\colon W\to Y$ be the resulting morphism. Denote $\Delta_W:=\rho_*\Delta_X^{\geq0}\geq0$.
$$
\xymatrix{
(X,\Delta_X^{\geq 0}+\varepsilon G) \ar@{-->}[rr]^{\quad\ \ \rho}\ar[rd]_{f}&& (W,\Delta_W) \ar[ld]^{\psi}\\
 &Y&
}
$$
Therefore, $(W, \Delta_W)$ is $\Q$-factorial dlt pair and note that $\Delta_W=\rho_*\Delta_X$. Moreover, by the definition of $G$,
\begin{equation}\label{eq:Delta}
\text{the divisor }\Delta_{W,v}\text{ is reduced,}
\end{equation}
and
\begin{equation}\label{eq:smallereq}
(\psi^*\Sigma_{f,T})_{\red} \leq \Delta_{W,v}.
\end{equation}
By \eqref{eq:6} we have
\begin{equation}\label{eq:6a}
K_W+\Delta_W\sim_\Q\psi^*(K_Y+\Sigma_{f,T}+M_f),
\end{equation}
hence $\psi\colon (W, \Delta_W)\rightarrow Y$ is an lc-trivial fibration and the map $\rho\colon (X, \Delta_X)\dasharrow (W, \Delta_W)$ is crepant birational. By Remark \ref{bbir}, we have
\begin{equation}\label{eq:equal}
B_\psi=\Sigma_{f,T}\quad\text{and}\quad M_\psi=M_f.
\end{equation}

\medskip

\emph{Step 4.}
By Lemma \ref{lem:1to1} the map $\rho$ is an isomorphism at the generic point of each log canonical centre of $(X,\Delta_X)$. In particular, there is a log canonical centre $S_W$ of $(W,\Delta_W)$ which is the strict transform of $S$, and it is a minimal log canonical centre of $(W,\Delta_W)$ over $T$. Let
$$\psi|_{S_W}\colon S_W\overset{h_W}{\longrightarrow} T_W\overset{\tau_W}{\longrightarrow} T$$
be the Stein factorisation.

We claim that $T_W=T'$ and $\tau_W=\tau$ (up to isomorphism). To this end, let $(p,q)\colon Z\to S\times S_W$ be the resolution of indeterminacies of the birational map $\rho|_S\colon S\dashrightarrow S_W$. Since $S$ and $S_W$ are normal by Proposition \ref{pro:fujino}, both $p$ and $q$ have connected fibres by Zariski's main theorem. As every curve contracted by $p$ is contracted by $h_W\circ q$, by the Rigidity lemma \cite[Lemma 1.15]{Deb01} there exists a morphism $\xi\colon S\to T_W$ with connected fibres such that $h_W\circ q=\xi\circ p$, and thus $f|_S=\tau_W\circ\xi$. The claim follows by the uniqueness of the Stein factorisation.

\medskip

\emph{Step 5.}
By restricting the equation \eqref{eq:6a} to $S_W$ we obtain
\begin{equation}\label{eq:8}
K_{S_W}+\Delta_{S_W}\sim_\Q(\psi |_{S_W})^*(K_T+\Xi_T+M_f|_T),
\end{equation}
hence $h_W\colon(S_W,\Delta_{S_W})\to T'$ is a klt-trivial fibration by a similar argument as in Step 1. Therefore, the map $\rho|_S\colon (S,\Delta_S)\dashrightarrow (S_W,\Delta_{S_W})$ is crepant birational over $T'$, hence by Remark \ref{bbir} we have
\begin{equation}\label{eq:BandM}
B_{h_W}=B_h\quad\text{and}\quad M_{h_W}\sim_\Q M_h.
\end{equation}
The divisor $B_{h_W}$ is effective since $\Delta_{S_W}$ is. This finishes the proof of (i).

\medskip

\emph{Step 6.}
By \eqref{eq:smallereq} there exists a component $D$ of $\psi^*T$ which dominates $T$ and which has coefficient $1$ in $\Delta_W$. Denote $\Delta_D:=(\Delta_W-D)|_D$, so that $\psi|_D\colon (D,\Delta_D)\to T$ is an lc-trivial fibration. Let $P$ be a component of $(\psi|_D)^*\Xi_T$. Since $(\psi|_D)^*\Xi_T=(\psi^*\Sigma_{f,T}-\psi^*T)|_D$, and each component of $\psi^*\Sigma_{f,T}$ is a component of $\Delta_W^{=1}$ by \eqref{eq:Delta} and \eqref{eq:smallereq}, this implies that $P$ is a component of $(\Delta_W^{=1}-D)|_D=\Delta_D^{=1}$. In other words,
\begin{equation}\label{eq:1001}
\big((\psi|_D)^*\Xi_T\big)_{\red}\leq\Delta_D^{=1}.
\end{equation}
Now, by Proposition \ref{pro:fujino} there are components $S_1,\dots,S_k$ of $\Delta_D^{=1}$ such that $S_W$ is a component of $S_1\cap\dots\cap S_k$, and note that the $S_i$ dominate $T$. This and \eqref{eq:1001} imply
$$\big((\psi|_D)^*\Xi_T\big)_{\red}\leq\Delta_D^{=1}-S_1-\dots-S_k,$$
hence
$$\big((\psi|_{S_W})^*\Xi_T\big)_{\red}\leq(\Delta_D^{=1}-S_1-\dots-S_k)|_{S_W}\leq \Delta_{S_W}^{=1}.$$

Thus, for every prime divisor $P\subseteq \Supp \tau^*\Xi_T$, the generic log-canonical threshold $\gamma_P$ of $(S_W,\Delta_{S_W})$ with respect to $h_W^*P$ is zero. If we define
$$E:=\sum\limits_{\Gamma\not\subseteq\tau^{-1}(\Xi_T)}(\mult_\Gamma B_{h_W})\cdot\Gamma=\sum\limits_{\Gamma\not\subseteq\tau^{-1}(\Xi_T)}(\mult_\Gamma B_h)\cdot\Gamma,$$
where the second equality follows from \eqref{eq:BandM}, then
\begin{equation}\label{eq:ef}
B_{h_W}=(\tau^*\Xi_T)_\red+E.
\end{equation}

\medskip

\emph{Step 7.}
Now we have all the ingredients to show (ii). From \eqref{eq:8} we have
\begin{equation}\label{eq:9}
K_{T'}+B_{h_W}+M_{h_W}\sim_\Q \tau^*(K_T+\Xi_T+M_f|_T).
\end{equation}
Write the Hurwitz formula for $\tau$ as $K_{T'}=\tau^*K_T+R$, and define
$$R':=R-\tau^*\Xi_T+(\tau^*\Xi_T)_\red.$$
Then \eqref{eq:ef} gives
$$\tau^*(K_T+\Xi_T)=K_{T'}-R+\tau^*\Xi_T+B_{h_W}-E-(\tau^*\Xi_T)_\red=K_{T'}-R'+B_{h_W}-E,$$
which together with \eqref{eq:BandM} and \eqref{eq:9} yields
$$\tau^*(M_f|_T)\sim_\Q M_h+R'+E.$$
This finishes the proof.
\end{proof}

\begin{rem}
{\rm In the proof of Proposition \ref{pro:step1aa}, the MMP technique we use is similar to (and inspired by) the one in the proof of \cite[Claim on p.\ 1730]{FG14}. The main difference between our approach and the one of \cite{FG14} is that we avoid semistable reduction in codimension 1 by choosing carefully the EWE divisor in Step 2 and by proving \eqref{eq:Delta} and \eqref{eq:smallereq}.}
\end{rem}

\begin{pro}\label{pro:step1ab}
Let  $f\colon(X, \Delta)\rightarrow Y$ be an acceptable lc-trivial fibration, where $(X,\Delta)$ is a log smooth log canonical pair and $Y$ is a smooth Ambro model for $f$. Fix a prime divisor $T$ on $Y$. Assume that there exists an $(f,T)$-bad divisor $\Sigma_{f,T}\subseteq Y$ which has simple normal crossings, and such that the divisor $\Delta+f^*\Sigma_{f,T}$ has simple normal crossings support.

Then there exists a commutative diagram
$$
\xymatrix{
X\ar[d]_{f} & X_0 \ar[d]^{f_0}\ar[l]_{\delta_{0,X}}\\
Y & Y_0, \ar[l]_{\delta_0}&
}
$$
where $\delta_0$ and $\delta_{0,X}$ are projective birational morphisms, such that, if $T_0$ and $\Delta_0$ are defined by $T_0:=(\delta_0)_*^{-1}T\subseteq Y_0$ and $K_{X_0}+\Delta_0=\delta_{0,X}^*(K_X+\Delta)$, then the following holds.

There exists an $(f_0,T_0)$-bad divisor $\Sigma_{f_0,T_0}\subseteq Y_0$ which has simple normal crossings, and such that the divisor $\Delta_0+f_0^*\Sigma_{f_0,T_0}$ has simple normal crossings support. Denote
$$\Delta_{X_0}=\Delta_0+\sum_{\Gamma\subseteq \Sigma_{f_0,T_0}}\gamma_{\Gamma}f_0^*\Gamma,$$
where $\gamma_\Gamma$ are the generic log canonical thresholds with respect to $f_0$ as in Definition \ref{dfn:cbf}. Let $S_0$ be a minimal log canonical centre of $(X_0, \Delta_{X_0})$ over $T_0$, which exists by Lemma \ref{lem:mincentreacceptable}. Let $f_0|_{S_0}\colon S_0\overset{h_0}{\longrightarrow} T_0'\overset{\tau_0}{\longrightarrow} T_0$ be the Stein factorisation, and let $h_0\colon (S_0, \Delta_{S_0})\rightarrow T_0'$ be the klt-trivial fibration as in Proposition \ref{pro:step1aa}(i). Then:
\begin{enumerate}
\item[(i)] $T_0'$ is an Ambro model for $h_0$,
\item[(ii)] $\tau_0^*(M_{f_0}|_{T_0})\sim_\Q M_{h_0}$, where $M_{f_0}$ is chosen so that $T_0\not\subseteq M_{f_0}$.
\end{enumerate}
\end{pro}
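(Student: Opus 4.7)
The plan is to construct $\delta_0\colon Y_0\to Y$ and $\delta_{0,X}\colon X_0\to X$ so that, once we apply Proposition \ref{pro:step1aa} to the base-changed fibration $f_0$, both correction terms $R'$ and $E$ in the formula of Proposition \ref{pro:step1aa}(ii) vanish, and simultaneously the resulting $T_0'$ is an Ambro model for $h_0$. I intend to achieve this by combining two independent ingredients: (a) enlarging the $(f_0,T_0)$-bad divisor so that $\tau_0^{-1}(\Xi_{T_0})$ absorbs both the ramification divisor of $\tau_0$ and the support of $B_{h_0}$; and (b) performing a base change constructed from Lemma \ref{lem:blowup} so that $T_0'$ dominates a prescribed Ambro model for $h$.

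For ingredient (b), I would first pick via Theorem \ref{nefness} an Ambro model $\widetilde T'\to T'$ for the klt-trivial fibration $h\colon(S,\Delta_S)\to T'$ furnished by Proposition \ref{pro:step1aa}(i), and then apply Lemma \ref{lem:blowup} with $\pi=\tau$ and $\widetilde f\colon\widetilde T'\to T'$. This produces a sequence of blowups $\overline T\to T$ along proper subvarieties of $T$ such that the normalisation of the main component of $T'\times_T\overline T$ admits a birational map to $\widetilde T'$ whose inverse is an isomorphism at generic points of $(\widetilde T'\to T')$-exceptional divisors; by enlarging the sequence further if needed, this map can be taken to be a genuine morphism. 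Extending the same sequence of blowups to $Y$ (all centres live inside $T\subset Y$) yields a birational morphism $\overline Y\to Y$ on which the strict transform of $T$ is identified with $\overline T$. For ingredient (a), I would next enlarge the bad divisor to an $(f_{\overline Y},T_{\overline Y})$-bad divisor on $\overline Y$ containing all $(\overline Y\to Y)$-exceptional divisors together with divisors whose restriction to $T_{\overline Y}$ covers both the strict transforms of the branch locus of $\tau$ and of the image $\tau(B_h)\subseteq T$. Lemma \ref{relSNC} then produces an embedded resolution delivering $\delta_0\colon Y_0\to\overline Y\to Y$ and $\delta_{0,X}\colon X_0\to X$ satisfying the log-smoothness hypotheses required to re-apply Proposition \ref{pro:step1aa}.

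For the verification, applying Proposition \ref{pro:step1aa} to $f_0$ yields $h_0\colon(S_0,\Delta_{S_0})\to T_0'$ together with the formula $\tau_0^*(M_{f_0}|_{T_0})\sim_\Q M_{h_0}+R'+E$. Since by construction $T_0'$ carries a morphism to the Ambro model $\widetilde T'$ and the fibration $h_0$ is crepant birational to a base change of $h$, Theorem \ref{nefness} forces $T_0'$ itself to be an Ambro model for $h_0$, yielding (i). Furthermore, the choice of $\Sigma_{f_0,T_0}$ ensures that both the ramification divisor of $\tau_0$ and the support of $B_{h_0}$ lie in $\tau_0^{-1}(\Xi_{T_0})$, which forces $R'=E=0$ in the displayed formula, yielding (ii). The main obstacle will be controlling $B_{h_0}$ a priori so that $\tau_0(B_{h_0})$ can be legitimately absorbed into the bad divisor; I plan to overcome this by tracking $B_{h_0}$ through the crepant birational map $S_0\dashrightarrow S$ supplied by Lemma \ref{lem:1to1} (used inside the proof of Proposition \ref{pro:step1aa}), which confines those components of $B_{h_0}$ not coming from $B_h$ to lie over $(\overline Y\to Y)$-exceptional divisors and hence already inside the bad divisor by (a).
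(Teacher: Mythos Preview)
Your approach to (ii) is close in spirit to the paper's, though the paper blows up the locus $\Pi:=\tau\big(\Supp(R'+E)\big)\subseteq T\subseteq Y$ rather than enlarging the bad divisor by auxiliary divisors on $Y$; the blowup has the advantage that the new exceptional divisors automatically enter $\Sigma_{f_0,T_0}=\delta_0^{-1}(\Sigma_{f,T})$, and the two-case analysis (either $\tau_0(\Gamma)\subseteq\Exc(\delta_0)$ or not) goes through cleanly without having to choose ambient divisors on $Y$ covering codimension-two loci.

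The genuine gap is in your proof of (i). You assert that after applying Lemma \ref{lem:blowup} and ``enlarging the sequence further if needed'', the birational map from the normalised fibre product $\widehat T':=\big(T'\times_T\overline T\big)^\nu$ to $\widetilde T'$ can be upgraded to a morphism. Lemma \ref{lem:blowup} gives no such thing: it only guarantees that the \emph{inverse} of this map is an isomorphism at the generic point of each $(\widetilde T'\to T')$-exceptional divisor. Blowing up $T$ further replaces $\widehat T'$ by a new fibre product, but there is no mechanism by which this resolves the indeterminacies of $\widehat T'\dashrightarrow\widetilde T'$; those indeterminacies live on $\widehat T'$, not on $T$. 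Consequently you cannot conclude that $T_0'$ dominates $\widetilde T'$, and your appeal to Theorem \ref{nefness} to deduce that $T_0'$ is an Ambro model fails.

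The paper circumvents this entirely. It first establishes (ii) at level $Y_0$ as above. Only \emph{then} does it choose an Ambro model $\nu\colon\widetilde T_0\to T_0'$ for $h_0$ and invoke Lemma \ref{lem:blowup} (with $\pi=\tau_0$) to produce a further base change $\delta_1\colon Y_1\to Y_0$. Repeating the argument for (ii) at level $Y_1$ yields $\tau_1^*(M_{f_1}|_{T_1})\sim_\Q M_{h_1}$. Now let $V$ resolve the indeterminacies of $T_1'\dashrightarrow\widetilde T_0$. The Negativity lemma gives $M_{\widetilde T_0}=\nu^*M_{h_0}-E_\nu$ and $M_V=q^*M_{h_1}-E_q$ with $E_\nu,E_q\geq0$ exceptional; chaining the two instances of (ii) through the commutative diagram forces $p^*E_\nu\sim_\Q E_q$, and the isomorphism-at-generic-points clause from Lemma \ref{lem:blowup} (the only thing it actually delivers) then forces $E_\nu=0$. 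This is precisely the substitute for the morphism you were unable to produce.
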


\begin{proof}
We use the notation from Proposition \ref{pro:step1aa}.

\medskip

\emph{Step 1.}
Let $R'$ and $E$ be the divisors as in Proposition \ref{pro:step1aa}(ii), and consider the closed subset
$$\Pi:=\tau\big(\Supp(R'+E)\big)\subseteq T\subseteq Y.$$
Let $Y_b\to Y$ be the blowup of $Y$ along $\Pi$, and let $Y_0\to Y_b$ be an embedded resolution of the strict transform of $T$ in $Y_b$, see \S\ref{subsec:pairs}. Let
$$\delta_0\colon Y_0\to Y$$
be the composition. Then, in particular,
\begin{equation}\label{eq:delta}
\delta_0\big(\Exc(\delta_0)\big)=\Pi.
\end{equation}
Let $X_\nu$ be the normalisation of the main component of the fibre product $X\times_Y Y_0$ with the induced morphism $\delta_\nu\colon X_\nu\to X$, and define a divisor $\Delta_\nu$ on $X_\nu$ by $K_{X_\nu}+\Delta_\nu=\delta_\nu^*(K_X+\Delta)$. Let $\beta\colon X_0\to X_\nu$ be an embedded resolution of $\big(X_\nu,\Delta_\nu+(f\circ\delta_\nu)^{-1}(\Sigma_{f,T})\big)$; in particular, $\beta$ is an isomorphism away from $(f\circ\delta_\nu)^{-1}(\Pi)$.

Set $\delta_{0,X}:=\delta_\nu\circ\beta\colon X_0\to X$, so that we obtain the commutative diagram
$$
\xymatrix{
X\ar[d]_{f} & X_\nu \ar[l]^{\delta_\nu} \ar[d] & X_0 \ar[ld]^{f_0}\ar[l]^\beta \ar@/_0.8pc/[ll]_{\delta_{0,X}}\\
Y & Y_0. \ar[l]_{\delta_0}&
}
$$
Define a divisor $\Delta_0$ on $X_0$ by $K_{X_0}+\Delta_0=\beta^*(K_{X_\nu}+\Delta_\nu)$. Then $(X_0,\Delta_0)$ is a log smooth pair, and let $f_0\colon (X_0,\Delta_0)\to Y_0$ be the induced lc-trivial fibration.

Let $T_0:=(\delta_0)_*^{-1}T\subseteq Y_0$. Then the proof of Lemma \ref{relSNC} shows that the divisor
$$\Sigma_{f_0,T_0}:=\delta_0^{-1}(\Sigma_{f,T})$$
is an $(f_0,T_0)$-bad divisor in $Y_0$ which has simple normal crossings and such that the support of the divisor $\Delta_{X_0}:=\Delta_0+f_0^*\Sigma_{f_0,T_0}$ has simple normal crossings.

Since $Y$ is an Ambro model for $f$, we have $M_{f_0}=\delta_0^*M_f$, hence
\begin{equation}\label{eq:pullback1}
M_{f_0}|_{T_0}=(\delta_0|_{T_0})^*(M_f|_T).
\end{equation}

\medskip

\emph{Step 2.}
Since $\delta_{0,X}^{-1}$ is an isomorphism at the generic point of $S$, there exists a unique log canonical centre $S_0$ of $(X_0,\Delta_{X_0})$ which is minimal over $T_0$, and such that the map $\delta_{0,X}|_{S_0}\colon S_0\to S$ is birational. Let
$$f_0|_{S_0}\colon S_0 \overset{h_0}{\longrightarrow} T_0'\overset{\tau_0}{\longrightarrow} T_0$$
be the Stein factorisation. Then $h_0\colon (S_0,\Delta_{S_0})\to T_0'$ is a klt-trivial fibration as in Proposition \ref{pro:step1aa}(i), where $K_{S_0}+\Delta_{S_0}=(K_{X_0}+\Delta_{X_0})|_{S_0}$.

The Rigidity lemma \cite[Lemma 1.15]{Deb01} applied to the diagram
$$
\xymatrix{
S\ar[d]_h && S_0 \ar[ll]_{\delta_{0,X}|_{S_0}} \ar[d]^{h_0} \\
T'\ar[d]_\tau && T_0' \ar[d]^{\tau_0} \\
T && T_0 \ar[ll]_{\delta_0|_{T_0}}
}
$$
shows that there is a morphism $\delta_0'\colon T_0'\to T'$ making the diagram commutative; note that the morphism is then necessarily birational. Thus, we obtain the commutative diagram
\begin{equation}\label{dia1}
\begin{gathered}
\xymatrix{
T'\ar[d]_{\tau} & T_0' \ar[l]_{\ \delta_0'} \ar[d]^{\tau_0} \\
T & T_0 \ar[l]_{\delta_0|_{T_0}}
}
\end{gathered}
\end{equation}
and
\begin{equation}\label{statement1}
\text{$M_{h_0}$ is the moduli divisor obtained by the base change of $h$ by $\delta_0'$.}
\end{equation}

\medskip

\emph{Step 3.}
We claim that
\begin{equation}\label{eq:moduli1}
\tau_0^*(M_{f_0}|_{T_0})\sim_\Q M_{h_0},
\end{equation}
which then shows (ii).

To this end, denote $\Xi_{T_0}:=(\Sigma_{f_0,T_0}-T_0)|_{T_0}$, and let $R_0$ be the ramification divisor of $\tau_0$. Let $\Gamma$ be a prime divisor in $\Supp R_0\cup\Supp B_{h_0}$. Then by Proposition \ref{pro:step1aa}(ii) it suffices to show that
\begin{equation}\label{eq:help}
\Gamma\subseteq \tau_0^{-1}(\Xi_{T_0}).
\end{equation}

There are two cases. Assume first that $\tau_0(\Gamma)$ -- viewed as a closed subset of $Y_0$ -- is a subset of $\Exc(\delta_0)$. Then, since $\Exc(\delta_0)$ is a divisor, there exists a prime divisor $\overline{\Gamma}\subseteq\Exc(\delta_0)$ such that $\tau_0(\Gamma)\subseteq \overline{\Gamma}\cap T_0$. Since $\Exc(\delta_0)\subseteq\Supp \Sigma_{f_0,T_0}$ by construction, we have
$$\tau_0(\Gamma)\subseteq \Supp (\Sigma_{f_0,T_0}-T_0)\cap T_0=\Xi_{T_0},$$
which implies \eqref{eq:help}.

Assume now that $\tau_0(\Gamma)\not\subseteq\Exc(\delta_0)$. Then by \eqref{eq:delta} we have $\delta_0(\tau_0(\Gamma))\not\subseteq\Pi$, and hence by \eqref{dia1},
\begin{equation}\label{eq:1111}
\delta_0'(\Gamma)\not\subseteq \Supp(R'+E).
\end{equation}
Since $\delta_0$ is an isomorphism at the generic point of $\tau_0(\Gamma)$, in the neighbourhood of this generic point we have
$$\delta_0'(\Supp R_0)=\Supp R\quad\text{and}\quad \delta_0'(\Supp B_{h_0})=\Supp B_h.$$
Therefore,
\begin{equation}\label{eq:1112}
\delta_0'(\Gamma)\subseteq \Supp R\cup\Supp B_h.
\end{equation}
But now \eqref{eq:1111} and \eqref{eq:1112} imply that $\delta_0'(\Gamma)\subseteq\tau^{-1}(\Xi_T)$, hence there exists a prime divisor $\overline{\Gamma}\subseteq\Supp(\Sigma_{f,T}-T)$ such that $\tau(\delta_0'(\Gamma))\subseteq \overline{\Gamma}\cap T$. Since $\tau(\delta_0'(\Gamma))=\delta_0(\tau_0(\Gamma))$ by \eqref{dia1}, and $\delta_0$ is an isomorphism at the generic point of $\tau_0(\Gamma)$, this implies
$$\tau_0(\Gamma)\subseteq(\delta_0)_*^{-1}\overline{\Gamma}\cap T_0\subseteq \Supp (\Sigma_{f_0,T_0}-T_0)\cap T_0=\Xi_{T_0},$$
which implies \eqref{eq:help} and finishes the proof of \eqref{eq:moduli1}.

\medskip

\emph{Step 4.}
Let $\nu\colon \widetilde T_0\to T_0'$ be an Ambro model for $h_0$. Then by Lemma \ref{lem:blowup} there exists a birational morphism $\mu\colon T_1\to T_0$ obtained by a sequence of blowups such that, if we denote by $\lambda\colon \widehat{T}_0\dashrightarrow \widetilde{T}_0$  the induced birational map from the normalisation of the main component of $T_0'\times_{T_0} T_1$ to $\widetilde{T}_0$, then $\lambda^{-1}$ is an isomorphism at the generic point of each $\nu$-exceptional prime divisor on $\widetilde{T}_0$. We denote by $\delta_1\colon Y_1\to Y_0$ the corresponding birational map of ambient spaces induced by $\mu$, so that $\mu=\delta_1|_{T_1}$.
\begin{equation}\label{dia3}
\begin{gathered}
\xymatrix{
T_0'\ar[d]_{\tau_0} & \widetilde{T}_0 \ar[l]_\nu & \widehat T_0 \ar[d] \ar@{-->}[l]_{\ \lambda} \\
T_0 && T_1 \ar[ll]_{\delta_1|_{T_1}} &
}
\end{gathered}
\end{equation}
By possibly blowing up further, we may assume that $\delta_1$ is an embedded resolution of the pair $(Y_0,\Sigma_{f_0,T_0})$, and in particular, that $\delta_1$ is an isomorphism away from $\Sigma_{f_0,T_0}$.

Let $(X_1,\Delta_1)$ be a log smooth pair which is a crepant pullback of $(X_0,\Delta_0)$ obtained by making a base change of $f_0$ by $\delta_1$ and taking an embedded resolution of the preimage of $\Sigma_{f_0,T_0}$ in $X_0\times_{Y_0} Y_1$ (similarly as in Step 1), and let $f_1\colon (X_1,\Delta_1)\to Y_1$ be the induced klt-trivial fibration.
\begin{equation}\label{dia5}
\begin{gathered}
\xymatrix{
X_0\ar[d]_{f_0} & X_0\times_{Y_0} Y_1 \ar[l] \ar[d] & X_1 \ar[ld]^{f_1}\ar[l] \ar@/_0.8pc/[ll]_{\delta_{1,X}}\\
Y_0 & Y_1 \ar[l]_{\delta_1}&
}
\end{gathered}
\end{equation}
Observe that $T_1=(\delta_1)_*^{-1}T_0$. Then the proof of Lemma \ref{relSNC} shows that the divisor
$$\Sigma_{f_1,T_1}:=\delta_1^{-1}(\Sigma_{f_0,T_0})$$
is an $(f_1,T_1)$-bad divisor in $Y_1$ which has simple normal crossings and such that the support of the divisor $\Delta_{X_1}:=\Delta_1+f_1^*\Sigma_{f_1,T_1}$ has simple normal crossings.

Since $Y_0$ is also an Ambro model, we have $M_{f_1}=\delta_1^*M_{f_0}$, hence
\begin{equation}\label{eq:pullback2}
M_{f_1}|_{T_1}=(\delta_1|_{T_1})^*(M_{f_0}|_{T_0}).
\end{equation}

\medskip

\emph{Step 5.}
Since $\delta_{1,X}^{-1}$ is an isomorphism at the generic point of $S_0$, there exists a unique log canonical centre $S_1$ of $(X_1,\Delta_{X_1})$ which is minimal over $T_1$, and such that the map $\delta_{1,X}|_{S_1}\colon S_1\to S_0$ is birational. Let
$$f_1|_{S_1}\colon S_1 \overset{h_1}{\longrightarrow} T_1'\overset{\tau_1}{\longrightarrow} T_1$$
be the Stein factorisation. Then $h_1\colon (S_1,\Delta_{S_1})\to T_1'$ is a klt-trivial fibration as in Proposition \ref{pro:step1aa}(i), where $K_{S_1}+\Delta_{S_1}=(K_{X_1}+\Delta_{X_1})|_{S_1}$.

As in Step 2, there is a birational morphism $\delta_1'\colon T_1'\to T_0'$ such that the diagram
\begin{equation}\label{dia2}
\begin{gathered}
\xymatrix{
T_0'\ar[d]_{\tau_0} & T_1' \ar[l]_{\ \delta_1'} \ar[d]^{\tau_1} \\
T_0 & T_1 \ar[l]_{\delta_1|_{T_1}}
}
\end{gathered}
\end{equation}
commutes, and
\begin{equation}\label{statement2}
\text{$M_{h_1}$ is the moduli divisor obtained by the base change of $h_0$ by $\delta_1'$.}
\end{equation}

\medskip

\emph{Step 6.}
We claim that
\begin{equation}\label{eq:moduli2}
\tau_1^*(M_{f_1}|_{T_1})\sim_\Q M_{h_1}.
\end{equation}
To this end, denote $\Xi_{T_1}:=(\Sigma_{f_1,T_1}-T_1)|_{T_1}$, and let $R_1$ be the ramification divisor of $\tau_1$. Let $\Gamma$ be a prime divisor in $\Supp R_1\cup\Supp B_{h_1}$. Then by Proposition \ref{pro:step1aa}(ii) it suffices to show that
\begin{equation}\label{eq:help2}
\Gamma\subseteq \tau_1^{-1}(\Xi_{T_1}).
\end{equation}

There are two cases. If $\tau_1(\Gamma)\subseteq\Exc(\delta_1)$, then we conclude analogously as in Step 3.

Now we assume that $\tau_1(\Gamma)\not\subseteq\Exc(\delta_1)$. Then since $\delta_1$ is an isomorphism at the generic point of $\tau_1(\Gamma)$, in the neighbourhood of this generic point we have
$$\delta_1'(\Supp R_1)=\Supp R_0\quad\text{and}\quad \delta_1'(\Supp B_{h_1})=\Supp B_{h_0}.$$
Therefore,
\begin{equation}\label{eq:1113}
\delta_1'(\Gamma)\subseteq \Supp R_0\cup\Supp B_{h_0}.
\end{equation}
But now \eqref{eq:1113} and \eqref{eq:help} imply that $\delta_1'(\Gamma)\subseteq\tau_0^{-1}(\Xi_{T_0})$, hence there exists a prime divisor $\overline{\Gamma}\subseteq\Supp(\Sigma_{f_0,T_0}-T_0)$ such that $\tau_0(\delta_1'(\Gamma))\subseteq \overline{\Gamma}\cap T_0$. Since $\tau_0(\delta_1'(\Gamma))=\delta_1(\tau_1(\Gamma))$ by \eqref{dia2}, and $\delta_1$ is an isomorphism at the generic point of $\tau_1(\Gamma)$, this implies
$$\tau_1(\Gamma)\subseteq(\delta_1)_*^{-1}\overline{\Gamma}\cap T_1\subseteq \Supp (\Sigma_{f_1,T_1}-T_1)\cap T_1=\Xi_{T_1},$$
which shows \eqref{eq:help2} and finishes the proof of \eqref{eq:moduli2}.

\medskip

\emph{Step 7.}
Recall that $\widehat{T}_0$ is the main component of $T_0'\times_{T_0} T_1$. Then from \eqref{dia2} there exists a birational morphism $\xi\colon T_1'\to \widehat{T}_0$, and denote
$$\theta=\lambda\circ\xi\colon T_1'\dashrightarrow \widetilde{T}_0.$$
Then $\theta^{-1}$ is an isomorphism at the generic point of each $\nu$-exceptional prime divisor on $\widetilde{T}_0$. Let us consider $(p,q)\colon V\to \widetilde T_0\times T_1'$ a resolution of indeterminacies of $\theta$. Then by \eqref{dia3} and \eqref{dia2} we have the diagram
\begin{equation}\label{dia4}
\begin{gathered}
\xymatrix{
T_0'\ar[d]_{\tau_0} & \widetilde{T}_0 \ar[l]_{\ \nu} &  T_1' \ar@{-->}[l]_{\quad \theta} \ar[d]^{\tau_1} \ar@/^0.8pc/[ll]^{\delta_1'} & V \ar[l]^q  \ar@/_1pc/[ll]_{p} \\
T_0 && T_1. \ar[ll]^{\delta_1|_{T_1}} &
}
\end{gathered}
\end{equation}

Denote by $M_{\widetilde{T}_0}$ be the moduli divisor of the klt-trivial fibration obtained by the base change of $h_0$ by $\nu$, and denote by $M_V$ be the moduli divisor of the klt-trivial fibration obtained by the base change of $h_1$ by $q$. By \eqref{statement2} and from the diagram \eqref{dia4} we have that $M_V$ is the moduli divisor obtained from $M_{\widetilde{T}_0}$ by the base change by $p$, hence
\begin{equation}\label{eq:ambroforV}
M_V=p^*M_{\widetilde{T}_0}
\end{equation}
since $\widetilde{T}_0$ is an Ambro model for $h_0$.

In particular, $M_{\widetilde{T}_0}$ and $M_V$ are nef $\Q$-divisors. Then, since $\nu_*M_{\widetilde{T}_0}=M_{h_0}$ and $q_*M_V=M_{h_1}$, and since $M_{h_0}$ and $M_{h_1}$ are $\Q$-Cartier divisors by \eqref{eq:moduli1} and \eqref{eq:moduli2}, by the Negativity lemma \cite[Lemma 3.39]{KM98} there exist a $\nu$-exceptional divisor $E_\nu\geq0$ on $\widetilde T_0$ and a $q$-exceptional divisor $E_q\geq0$ on $V$ such that
\begin{equation}\label{eq:twoequations}
M_{\widetilde T_0}=\nu^*M_{h_0}-E_\nu\quad\text{and}\quad M_{V}=q^*M_{h_1}-E_q.
\end{equation}
Therefore:
\begin{align*}
M_{V}&=q^*M_{h_1}-E_q & \text{by \eqref{eq:twoequations}}\\
&\sim_\Q q^*\tau_1^*(M_{f_1}|_{T_1})-E_q & \text{by \eqref{eq:moduli2}}\\
&=q^*\tau_1^*(\delta_1|_{T_1})^*(M_{f_0}|_{T_0})-E_q & \text{by \eqref{eq:pullback2}}\\
&=p^*\nu^*\tau_0^*(M_{f_0}|_{T_0})-E_q & \text{by \eqref{dia4}}\\
&\sim_\Q p^*\nu^*M_{h_0}-E_q & \text{by \eqref{eq:moduli1}}\\
&=p^*M_{\widetilde{T}_0}+p^*E_\nu-E_q & \text{by \eqref{eq:twoequations}}\\
&=M_V+p^*E_\nu-E_q. & \text{by \eqref{eq:ambroforV}}
\end{align*}
Therefore $p^*E_\nu\sim_\Q E_q$. Since $\theta^{-1}$ is an isomorphism at the generic point of each component of $E_\nu$, and $E_q$ is $q$-exceptional, this implies $E_\nu=E_q=0$. In particular, from \eqref{eq:twoequations} we have
\begin{equation}\label{eq:descent1}
M_{\widetilde T_0}=\nu^*M_{h_0},
\end{equation}
hence $M_{h_0}$ is nef and $T_0'$ is an Ambro model for $h$, which gives (i).
\end{proof}

\medskip

Now we can prove the main results of this paper.

\begin{proof}[Proof of Theorem \ref{thm:main}]
We may assume that $Y=Y'$, so that we have to show that $\nu^*M_Y$ is semiample for every prime divisor $T$ on $Y$ with the normalisation $T^\nu$ and the induced morphism $\nu\colon T^\nu\to Y$. By Remark \ref{rem:redtolc} we may assume that $(X,\Delta)$ is log canonical.

We use the following remark repeatedly in the proof: If $\alpha\colon Z\to Y$ is any birational model and if $T_Z:=\alpha^{-1}_*T$ with the normalisation $T_Z^\nu$ and the induced morphism $\nu_Z\colon T_Z^\nu\to Z$, then it suffices to show that $\nu_Z^*M_Z$ is semiample. Indeed, since $Y$ is an Ambro model, we have $M_Z=\alpha^*M_Y$, hence
$$\nu_Z^*M_Z=(\alpha_{T_Z})^*\nu^*M_Y,$$
where $\alpha_{T_Z}\colon T_Z^\nu\to T^\nu$ is the induced morphism. Thus, $\nu^*M_Y$ is semiample if and only if $\nu^*_Z M_Z$ is semiample by Lemma \ref{geq2}.

By replacing $Y$ by its desingularisation and $T$ by its strict transform, we may assume that $Y$ is smooth, and by replacing $(X,\Delta)$ by its log resolution, we may assume that $f$ is an acceptable lc-trivial fibration such that $(X,\Delta)$ is a log smooth log canonical pair. Again by replacing $Y$, $T$ and $(X,\Delta)$ by higher models, by Lemma \ref{relSNC} we may additionally assume that there exists an $(f,T)$-bad divisor $\Sigma_{f,T}$ which has simple normal crossings and such that the divisor $\Delta+f^*\Sigma_{f,T}$ has simple normal crossings support.

Then Proposition \ref{pro:step1ab} shows that, after replacing $Y$, $T$ and $(X,\Delta)$ by higher models, there exists a simple normal crossings divisor $\Delta_X$ on $X$ such that the pair $(X,\Delta_X)$ is log canonical, and there exists a minimal log canonical centre $S$ of $(X,\Delta_X)$ such that, if $f|_S\colon S\overset{h}{\longrightarrow} T'\overset{\tau}{\longrightarrow} T$ is the Stein factorisation, then:
\begin{enumerate}
\item[(a)] $h\colon (S, \Delta_S)\rightarrow T'$ is a klt-trivial fibration, where
$$K_S+\Delta_S=(K_X+\Delta_X)|_S,$$
\item[(b)] $T'$ is an Ambro model for $h$,
\item[(c)] $\tau^*(M_Y|_T)=M_h$.
\end{enumerate}
It follows by (b) that $M_h$ is semiample since we assume the B-Semiampleness Conjecture in dimension $n-1$, and hence $M_Y|_T$ is semiample by (c) and by Lemma \ref{geq2}. This concludes the proof.
\end{proof}

\begin{proof}[Proof of Theorem \ref{thm:bplus}]
We may assume that $Y=Y'$, so that we have to show that $\nu^*M_Y$ is semiample for every prime divisor $T\subseteq\sB_+(M_Y)$ with the normalisation $T^\nu$ and the induced morphism $\nu\colon T^\nu\to Y$. By Remark \ref{rem:redtolc} we may assume that $(X,\Delta)$ is log canonical.

As in the proof of Theorem \ref{thm:main}, we use the following remark repeatedly in the proof: If $T\subseteq\sB_+(M_Y)$ is a prime divisor, if $\alpha\colon Z\to Y$ is any birational model and if $T_Z:=\alpha^{-1}_*T$ with the normalisation $T_Z^\nu$ and the induced morphism $\nu_Z\colon T_Z^\nu\to Z$, then it suffices to show that $\nu_Z^*M_Z$ is semiample. Note that by \cite[Proposition 2.3]{BBP13} we have
$$\sB_+(\alpha^*M_Y)=\alpha^{-1}\big(\sB_+(M_Y)\big)\cup\Exc(\alpha),$$
so that $T_Z\subseteq \sB_+(\alpha^*M_Y)=\sB_+(M_Z)$.

Again as in the proof of Theorem \ref{thm:main}, by replacing $Y$, $T$ and $(X,\Delta)$ by higher models, we may assume that $f$ is an acceptable lc-trivial fibration such that $(X,\Delta)$ is log smooth, and that there exists an $(f,T)$-bad divisor $\Sigma_{f,T}$ which has simple normal crossings and such that the divisor $\Delta+f^*\Sigma_{f,T}$ has simple normal crossings support.

Then Proposition \ref{pro:step1ab} shows that, after replacing $Y$, $T$ and $(X,\Delta)$ by higher models, there exists a simple normal crossings divisor $\Delta_X$ on $X$ such that the pair $(X,\Delta_X)$ is log canonical, and there exists a minimal log canonical centre $S$ of $(X,\Delta_X)$ such that, if $f|_S\colon S\overset{h}{\longrightarrow} T'\overset{\tau}{\longrightarrow} T$ is the Stein factorisation, then:
\begin{enumerate}
\item[(a)] $h\colon (S, \Delta_S)\rightarrow T'$ is a klt-trivial fibration, where
$$K_S+\Delta_S=(K_X+\Delta_X)|_S,$$
\item[(b)] $T'$ is an Ambro model for $h$,
\item[(c)] $\tau^*(M_Y|_T)=M_h$.
\end{enumerate}
As in Proposition \ref{pro:step1aa}, there exist a klt-trivial fibration $h_W\colon(S_W,\Delta_{S_W})\to T'$ and a crepant birational map $\theta\colon(S,\Delta_S)\dashrightarrow(S_W,\Delta_{S_W})$ over $T'$ such that $\Delta_{S_W}$ is effective and
\begin{equation}\label{eq:MM}
M_{h_W}=M_h.
\end{equation}
Since $T$ is a component of $\sB_+(M_Y)$, the divisor $M_Y|_T$ is not big by Lemma \ref{lem:nakamaye}, hence
\begin{equation}\label{eq:kappa}
\kappa(T',M_{h_W})=\kappa(T',M_h)=\kappa(T,M_Y|_T)\leq n-2.
\end{equation}
By Theorem \ref{ambro1}, there exists a diagram
$$
\xymatrix{
(S_W,\Delta_{S_W})\ar[d]_{h_W} && \big(\widetilde S_W,\Delta_{\widetilde S_W}\big)\ar[d]^{\widetilde h_W}\\
T' & W \ar[l]^{\tau_W}\ar[r]_{\widetilde \tau_W}&\widetilde T'
}
$$
where $\tau_W\colon W\rightarrow T'$ is generically finite, $\widetilde \tau_W \colon W\rightarrow \widetilde T'$ is surjective, $\widetilde T'$ is an Ambro model for the klt-trivial fibration $h_W'\colon\big(\widetilde S_W,\Delta_{\widetilde S_W}\big)\to\widetilde{T}'$, the moduli divisor $M_{\widetilde h_W}$ is big and
\begin{equation}\label{eq:tautau2}
\tau_W^*M_{h_W}=\widetilde\tau_W^*M_{\widetilde h_W}.
\end{equation}
In particular, by \eqref{eq:kappa} and \eqref{eq:tautau2} we have
$$\dim\widetilde{T}'=\kappa\big(\widetilde{T}',M_{\widetilde h_W}\big)=\kappa(T',M_{h_W})\leq n-2.$$
Since we assume the B-Semiampleness Conjecture in dimensions at most $n-2$, the divisor $M_{h_W'}$ is semiample. By \eqref{eq:tautau2} and by Lemma \ref{geq2}, the divisor $M_{h_W}$ is semiample, hence $M_h$ is semiample by \eqref{eq:MM}. By (c) and by Lemma \ref{geq2}, this proves finally that the divisor $M_Y|_T$ is semiample.
\end{proof}

\begin{proof}[Proof of Corollary \ref{cor:surfaces}]
Immediate from Theorem \ref{thm:main} and from \cite[Theorem 0.1]{Amb04}.
\end{proof}

\begin{proof}[Proof of Corollary \ref{cor:threefolds}]
Immediate from Theorem \ref{thm:bplus} and from \cite[Theorem 0.1]{Amb04}.
\end{proof}

\section{Reduction to a weaker conjecture}\label{sec:reduction}

In this section we prove that the B-Semiampleness Conjecture is equivalent to the following much weaker version.

\begin{con}\label{con:weakbsemi}
Let $(X,\Delta)$ be a log canonical pair and let $f\colon (X,\Delta)\to Y$ be a klt-trivial fibration over an $n$-dimensional variety $Y$. If $Y$ is an Ambro model of $f$ and if the moduli divisor $M_Y$ is big, then $M_Y$ is semiample.
\end{con}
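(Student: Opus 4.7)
I would proceed by strong induction on $n = \dim Y$. The base case $n = 1$ is \cite[Theorem 0.1]{Amb04}, since any nef $\Q$-divisor on a smooth projective curve is semiample. For the inductive step, assume Conjecture \ref{con:weakbsemi} holds in all dimensions $< n$; by Theorem \ref{thm:reductiontoweaker}, this is equivalent to assuming the full B-Semiampleness Conjecture in all dimensions $< n$. In particular, the hypothesis of Theorem \ref{thm:bplus} (B-Semiampleness in dimensions $\leq n-2$) is available.

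First I would apply Theorem \ref{thm:bplus}: after replacing $Y$ by a sufficiently high birational model $\pi\colon Y' \to Y$, the pullback $\nu^*\pi^*M_Y$ is semiample on the normalisation $T^\nu$ of every divisorial component $T$ of $\sB_+(\pi^*M_Y)$. Since $Y$ is an Ambro model, $\pi^*M_Y$ is nef and big, and the whole obstruction to semiampleness of $\pi^*M_Y$ on $Y'$ is concentrated in $\sB_+(\pi^*M_Y)$. Thus the inductive input from lower dimensions already yields semiampleness along every divisorial stratum of the augmented base locus and reduces the task to promoting this to global semiampleness.

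The second step, which is the hard step, would invoke the full canonical bundle formula. By Theorem \ref{thm:invAdjunction} the pair $(Y', B_{Y'})$ is log canonical, and $K_{Y'} + B_{Y'} + \pi^*M_Y \sim_\Q F$ for some effective $\Q$-divisor $F$ pushed down from $X$. The plan is to run a Kawamata--Shokurov type base-point-free argument on this log canonical pair: write $\pi^*M_Y \sim_\Q A + E$ with $A$ ample and $E \geq 0$ supported on $\sB_+(\pi^*M_Y)$, consider the log canonical pair $(Y', B_{Y'} + tE)$ for small rational $t > 0$, and attempt a Fujino-type extension theorem along the components of $\sB_+(\pi^*M_Y)$ to lift the sections of $m\pi^*M_Y$ already provided by the first step to global sections on $Y'$.

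\textbf{Main obstacle.} The ascent from divisorial semiampleness inside $\sB_+(\pi^*M_Y)$ to semiampleness on all of $Y'$ is morally a non-vanishing/abundance statement for nef and big divisors on log canonical pairs, one of the central open problems of the subject; I do not expect to resolve it with the tools in the excerpt. Two concrete difficulties stand in the way. First, by Lemma \ref{lem:nakamaye} together with Nakamaye's theorem, $\sB_+(\pi^*M_Y)$ generally contains positive-dimensional subvarieties $V$ with $(\pi^*M_Y)^{\dim V}\cdot V = 0$ that are \emph{not} divisors, and Theorem \ref{thm:bplus} gives no information along such lower-dimensional strata. Second, even where divisorial semiampleness is available, lifting sections of $|m\pi^*M_Y|_{T^\nu}|$ to sections of $|m\pi^*M_Y|$ on $Y'$ requires a vanishing/extension result of a strength comparable to full abundance. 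Consequently, while the inductive machinery of the paper cleanly reduces the problem to this extension step, the final ascent appears to need a genuinely new ingredient.
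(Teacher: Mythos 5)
The statement you were asked to prove is Conjecture~\ref{con:weakbsemi}, and the paper does \emph{not} prove it: it is precisely the open problem that the paper isolates and to which the B-Semiampleness Conjecture is reduced by Theorem~\ref{thm:reductiontoweaker} (proved as Theorem~\ref{KLTimplLC}). So there is no internal proof to compare against, and your honest acknowledgment of a ``main obstacle'' is the correct assessment rather than a shortcoming.

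That said, a few comments on the framework you sketch. Your inductive bootstrap is sound in outline: assuming Conjecture~\ref{con:weakbsemi} in dimensions $<n$ does, via Theorem~\ref{thm:reductiontoweaker}, deliver the full B-Semiampleness Conjecture in dimensions $<n$, which makes both Theorem~\ref{thm:main} (requiring dimension $n-1$) and Theorem~\ref{thm:bplus} (requiring dimensions $\leq n-2$) available; in fact you could use Theorem~\ref{thm:main} to get semiampleness of $\nu^*\pi^*M_Y$ along \emph{every} divisorial valuation over the Ambro model, not only those inside $\sB_+(\pi^*M_Y)$, which is stronger than what you invoke. Your two obstructions to completing the argument are exactly right and worth stating precisely. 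First, Nakamaye's theorem (\cite[Theorem 0.3]{Nak00}), as used in Lemma~\ref{lem:nakamaye}, shows that for a nef and big $M_Y$ the set $\sB_+(M_Y)$ is the union of positive-dimensional subvarieties $V$ with $(M_Y)^{\dim V}\cdot V=0$; these need not be divisorial, and there is no general way to make them divisorial by blowing up while preserving the nef-and-big structure, so the divisorial information supplied by Theorems~\ref{thm:main} and~\ref{thm:bplus} genuinely does not cover the whole null locus. Second, even along divisorial strata, lifting sections from $T^\nu$ back to $Y'$ would require an extension theorem for the adjoint-type divisor $K_{Y'}+B_{Y'}+\pi^*M_Y$ whose strength is comparable to abundance for log canonical pairs, which is open. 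In short: your plan reproduces what the paper's own machinery achieves (reduction to a semiampleness statement on a klt-trivial fibration with big moduli divisor and effective boundary), recognizes correctly that nothing in the paper closes the final gap, and identifies the right reasons why; no proof attempt could succeed here without new input, and the paper itself makes no claim to the contrary.
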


The following result implies Theorem \ref{thm:reductiontoweaker}.

\begin{thm}\label{KLTimplLC}
Assume Conjecture \ref{con:weakbsemi} in dimensions at most $n$. Then the B-Semiampleness Conjecture holds in dimension $n$.
\end{thm}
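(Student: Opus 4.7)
The plan is to combine the MMP techniques of Section \ref{sec:5and6} with a direct application of Theorem \ref{ambro1} to deduce the B-Semiampleness Conjecture from Conjecture \ref{con:weakbsemi}. Let $f\colon(X,\Delta)\to Y$ be an lc-trivial fibration satisfying the hypotheses of the B-Semiampleness Conjecture in dimension $n$; I would like to show that $M_f$ is semiample. By Remark \ref{rem:redtolc} we may assume that $(X,\Delta)$ is globally log canonical, and passing to a log resolution and invoking Lemma \ref{relSNC}, we may further assume that $(X,\Delta)$ is log smooth, $Y$ is smooth, $f$ is acceptable, and that an SNC $(f,T)$-bad divisor exists, with $T$ any fixed auxiliary prime on $Y$.

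The first stage disposes of the klt-trivial case. Assume $f$ is klt-trivial. Theorem \ref{ambro1} yields a diagram with $\tau\colon W\to Y$ generically finite and surjective, $\tau'\colon W\to Y'$ surjective, a klt-trivial fibration $f'\colon(X',\Delta')\to Y'$ with $M_{f'}$ big, and $\tau^*M_f=(\tau')^*M_{f'}$; after further birational base change both $Y$ and $Y'$ are Ambro models. Since $\tau$ is generically finite we have $\dim Y'\leq\dim W=\dim Y=n$, so Conjecture \ref{con:weakbsemi} in dimension $\dim Y'$ applies to $f'$ and gives that $M_{f'}$ is semiample. Consequently $\tau^*M_f=(\tau')^*M_{f'}$ is semiample, and Lemma \ref{geq2} applied to the surjective morphism $\tau$ proves that $M_f$ is semiample.

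The second stage reduces the general lc-trivial case to the klt-trivial one, following the MMP recipe of Propositions \ref{pro:step1aa} and \ref{pro:step1ab} in a global setting. If $f$ is not klt-trivial, the generic fibre of $f$ carries a log canonical centre of $(X,\Delta)$, so with $\Delta_X$ defined as in Proposition \ref{pro:step1aa}, one can choose a minimal log canonical centre $S$ of $(X,\Delta_X)$ which dominates $Y$. Taking the Stein factorisation $f|_S\colon S\overset{h}{\longrightarrow} T'\overset{\tau}{\longrightarrow} Y$, we have $\dim T'=n$ and $\tau$ finite surjective. Running the $(K_X+\Delta_X^{\geq 0}+\varepsilon G)$-MMP with scaling over $Y$ for a suitable EWE divisor $G$ as in Proposition \ref{pro:step1aa}, and then carrying out the sequence of birational base changes on $Y$ described in Proposition \ref{pro:step1ab}, one should obtain a klt-trivial fibration $h\colon(S,\Delta_S)\to T'$ (with $\Delta_S\geq0$ by adjunction from the effective $\Delta_X^{\geq0}$ along the log canonical centre $S_W$) for which $T'$ is an Ambro model and $\tau^*M_f\sim_\Q M_h$. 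The first stage, applied to $h$, then gives that $M_h$, and therefore $\tau^*M_f$, is semiample; Lemma \ref{geq2} concludes that $M_f$ is semiample.

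The main obstacle is the global adaptation of Propositions \ref{pro:step1aa} and \ref{pro:step1ab}, in which the role of the auxiliary divisor $T\subseteq Y$ is played by $Y$ itself and the role of $\Xi_T$ is played by a full $f$-bad SNC divisor on $Y$: one has to show that the correction terms $R'$ and $E$ of Proposition \ref{pro:step1aa}(ii) can be absorbed into $\tau^{-1}$ of this divisor, and then killed by additional blow-ups exactly as in Step 7 of Proposition \ref{pro:step1ab}, using the Negativity lemma comparison between the base changes of $h$ by an Ambro model and by the chosen birational model. Since the arguments there are essentially local around $T$ and rely only on the structure of the $(f,T)$-bad divisor in a neighbourhood of $T$, I expect them to extend with only notational modifications, at which point the two stages combine to yield the semiampleness of $M_f$.
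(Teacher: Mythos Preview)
Your two–stage strategy is sound and leads to a correct proof, but it follows a genuinely different route from the paper's argument, and the route you sketch is considerably more laborious.

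The paper never attempts to adapt Proposition~\ref{pro:step1ab} to a ``global'' situation in which the role of $T$ is played by all of $Y$. Instead it makes one extra move at the outset that eliminates the ramification problem entirely: after choosing a minimal log canonical centre $S'$ of a log resolution of $(X,\Delta)$ over $Y$, it base-changes the whole fibration $f$ along the finite part $\tau_1\colon Y_1\to Y$ of the Stein factorisation of $f'|_{S'}$. On the new base $Y_1$ the strict transform $S_1$ already maps to $Y_1$ with connected fibres, so there is no finite map $\tau$ and hence no Hurwitz term $R'$ to absorb. One then simply enlarges the bad divisor $\Sigma_{f_1,0}$ so that it contains $\Supp B_{h_1}$, passes to an embedded resolution $Y_2\to Y_1$, and runs the MMP of Proposition~\ref{pro:step1aa}. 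The resulting identity $B_{h_2}=\Sigma_{f_2,0}$ gives $M_{h_2}=M_{f_2}$ directly, and because $B_{h_2}$ is reduced with simple normal crossings one reads off that $Y_2$ is an Ambro model for $h_2$ from \cite[Theorem~2.7]{Amb04}; the Negativity-lemma gymnastics of Proposition~\ref{pro:step1ab} are not needed at all. From there Theorem~\ref{ambro1} and induction on $\dim Y$ finish the proof, exactly as in your first stage.

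By contrast, your second stage keeps the finite map $\tau\colon T'\to Y$ and proposes to kill $R'$ and $E$ by a global analogue of the blow-up procedure in Proposition~\ref{pro:step1ab}. This can be made to work---enlarging $\Sigma$ to contain the branch locus of $\tau$ and $\tau(\Supp B_h)$ and then resolving does force $R'=E=0$ after base change, and the Negativity-lemma comparison with an Ambro model of $h$ (via Lemma~\ref{lem:blowup} applied to $\tau$) does show that the new $T'$ is an Ambro model---but you should be aware that the adaptation is not purely notational. In Proposition~\ref{pro:step1ab} the set $\Pi$ lies in $T\subsetneq Y$ and has codimension $\geq 2$ in $Y$, so blowing it up is meaningful; in your situation $\Pi$ is typically a divisor in $Y$ and the right operation is to enlarge $\Sigma$ and take an embedded resolution, which is what the paper in effect does. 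Also, $T'$ need not be smooth, so you cannot invoke the SNC criterion for Ambro models and must really carry out the Negativity-lemma step. The paper's preliminary base change to $Y_1$ sidesteps both of these issues and yields a noticeably shorter proof.
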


\begin{proof}
\emph{Step 1.}
Let $f\colon(X,\Delta)\rightarrow Y$ be an lc-trivial fibration, where $\dim Y=n$, $Y$ is an Ambro model for $f$, and $\Delta$ is effective over the generic point of $Y$. We may assume that $(X,\Delta)$ is not klt over the generic point of $Y$. By Remark \ref{rem:redtolc} we may assume that $(X,\Delta)$ is log canonical.

Let $\mu\colon X'\to X$ be a log resolution of $(X,\Delta)$ and define $\Delta'$ by the formula $K_{X'}+\Delta'=\mu^*(K_X+\Delta)$. Set $f':=f\circ\mu\colon X'\to Y$, and let $S'$ be a minimal log canonical centre of $(X',\Delta')$ over $Y$.

Let $f'|_{S'}\colon S'\overset{h'}{\longrightarrow} Y_1\overset{\tau_1}{\longrightarrow} Y$ be the Stein factorisation. Let $X_1$ be an embedded resolution of the main component of $X'\times_Y Y_1$ with the induced morphisms $\sigma_1\colon X_1\to X'$ and $f_1\colon X_1\to Y_1 $, and define $\Delta_1$ by the formula $K_{X_1}+\Delta_1=\sigma_1^*(K_{X'}+\Delta')$. Then $f_1\colon (X_1,\Delta_1)\to Y_1$ is an acceptable lc-trivial fibration.
$$
\xymatrix{
X \ar[dr]_f & X'\ar[l]_{\mu} \ar[d]^{f'} & X_1 \ar[d]^{f_1} \ar[l]_{\ \sigma_1}\\
& Y & Y_1\ar[l]_{\ \tau_1}
}
$$

By construction, there is an irreducible component $S_1$ of $\sigma_1^{-1}(S')$ such that $\sigma_1\vert_{S_1}\colon S_1\to S'$ is an isomorphism at the generic point of $S_1$. Then $S_1$ is a minimal log canonical centre of $(X_1,\Delta_1)$ over $Y_1$, and denote $h_1:=f_1|_{S_1}\colon S_1\to Y_1$. Then general fibres of $h'$ and $h_1$ are isomorphic, and in particular, a general fibre of $h_1$ is connected. Since $S_1$ is normal by Proposition \ref{pro:fujino}, by considering the Stein factorisation of $h_1$ we deduce that $h_1$ has connected fibres by Zariski's main theorem.

If we define $\Delta_{S_1}$ by the formula $K_{S_1}+\Delta_{S_1}=(K_{X_1}+\Delta_1)|_{S_1}$, then $h_1\colon (S_1,\Delta_{S_1})\to Y_1$ is a klt-trivial fibration by an argument similar to that in Step 1 of the proof of Proposition \ref{pro:step1aa}. Let $\Sigma_{f_1,0}$ be an $(f_1,0)$-bad divisor such that $\Supp B_{h_1}\subseteq\Sigma_{f_1,0}$.

\medskip

\emph{Step 2.}
By Lemma \ref{relSNC} and its proof, there exist a commutative diagram
$$
\xymatrix{
X_1 \ar[d]_{f_1} & X_2 \ar[d]^{f_2} \ar[l]_{\ \sigma_2}\\
Y_1 & Y_2\ar[l]_{\ \tau_2}
}
$$
where the morphism $\tau_2\colon Y_2\to Y_1$ is an embedded resolution of $(Y_2,\Sigma_{f_1,0})$, and $f_2\colon(X_2,\Delta_2)\to Y_2$ is an acceptable lc-trivial fibration such that $\Sigma_{f_2,0}:=\tau_2^{-1}(\Sigma_{f_1,0})$ is an $(f_2,0)$-bad divisor which has simple normal crossings and such that $\Delta_2+f_2^*\Sigma_{f_2,0}$ has simple normal crossings support. By Lemma \ref{relSNC} we may assume that $\sigma_2^{-1}$ is an isomorphism at the generic point of $S_1$, and let $S_2$ be the strict transform of $S_1$ in $X_2$. Denote $h_2:=f_2|_{S_2}\colon S_2\to Y_2$, and define a divisor $\Delta_{S_2}$ by the formula $K_{S_2}+\Delta_{S_2}=(K_{X_2}+\Delta_2)|_{S_2}$. Then $h_2\colon (S_2,\Delta_{S_2})\to Y_2$ is a klt-trivial fibration similarly as in Step 1.

\medskip

\emph{Step 3.}
Set
$$\Delta_{X_2}:=\Delta_2+\sum_{\Gamma\subseteq \Sigma_{f_2,0}}\gamma_{\Gamma}f^*\Gamma,$$
where $\gamma_\Gamma$ are the generic log canonical thresholds with respect to $f_2$ as in Definition \ref{dfn:cbf}. Let $\mathcal{G}$ be the set of all components $P$ of $f_2^*\Sigma_{f_2,0}$ with $\mult_P\Delta_{X_2}=0$ and denote
$$G:=\Supp\Delta_{X_2,h}^{<0}\cup\Supp\Delta_{X_2,v}^{<1}\cup\bigcup_{P\in\mathcal G}P.$$
Pick $\varepsilon\in\Q$ with $0<\varepsilon \ll 1$ such that the pair $(X_2, \Delta_{X_2}^{\geq 0}+\varepsilon G)$ is dlt and run the $(K_{X_2}+\Delta_{X_2}^{\geq 0}+\varepsilon G)$-MMP $\rho\colon X_2\dashrightarrow W$ with scaling of an ample divisor over $Y_2$. As in Steps 2 and 3 of Proposition \ref{pro:step1aa}, this MMP terminates with a dlt pair $(W,\Delta_W)$ with $\Delta_W\geq0$. Moreover, there exists an lc-trivial fibration $\psi\colon (W,\Delta_W)\to Y_2$ such that the divisor $\Delta_{W,v}$ is reduced and
\begin{equation}\label{eq:smallereq1}
\big(\psi^*\Sigma_{f_2,0}\big)_{\red} \leq \Delta_{W,v}.
\end{equation}
Furthermore, we have
\begin{equation}\label{eq:BandM2}
M_\psi=M_{f_2}\quad\text{and}\quad B_\psi=\Sigma_{f_2,0}.
\end{equation}

By Lemma \ref{lem:1to1}, $\rho$ is an isomorphism at the generic point of $S_2$, and let $S_W$ be the strict transform of $S_2$. Then $S_W$ is a minimal log canonical centre of $(W,\Delta_W)$ over $Y_2$ and define $\Delta_{S_W}$ by the formula $K_{S_W}+\Delta_{S_W}=(K_W+\Delta_W)|_{S_W}$, see Proposition \ref{pro:fujino}. Then $h_W:=\psi|_{S_W}\colon(S_W,\Delta_{S_W})\to Y_2$ is a klt-trivial fibration by a similar argument as in Step 4 of the proof of Proposition \ref{pro:step1aa}. Moreover, as in Step 5 of the proof of Proposition \ref{pro:step1aa}, the map $\rho|_{S_2}\colon (S_2,\Delta_{S_2})\dashrightarrow (S_W,\Delta_{S_W})$ is crepant birational over $Y_2$, and we have
\begin{equation}\label{eq:BandM3}
B_{h_W}=B_{h_2}\quad\text{and}\quad M_{h_W}=M_{h_2}.
\end{equation}

Therefore, by comparing the canonical bundle formulas of $\psi$ and $h_W$ and using \eqref{eq:BandM2} and \eqref{eq:BandM3} we obtain
\begin{equation}\label{eq:compare}
\Sigma_{f_2,0}+M_{f_2}=B_{h_2}+M_{h_2}.
\end{equation}

\medskip

\emph{Step 4.}
By Proposition \ref{pro:fujino} there are components $D_1,\dots,D_k$ of $\Delta_W^{=1}$ such that $S_W$ is a component of $D_1\cap\dots\cap D_k$, and note that the $D_i$ dominate $Y_2$. This and \eqref{eq:smallereq1} imply
$$\big(\psi^*\Sigma_{f_2,0}\big)_{\red}\leq\Delta_W^{=1}-D_1-\dots-D_k,$$
hence
$$\big(h_W^*\Sigma_{f_2,0}\big)_{\red}\leq(\Delta_W^{=1}-D_1-\dots-D_k)|_{S_W}\leq \Delta_{S_W}^{=1}.$$

Thus, for every prime divisor $P\subseteq \Supp \Sigma_{f_2,0}$, the generic log-canonical threshold $\gamma_P$ of $(S_W,\Delta_{S_W})$ with respect to $h_W^*P$ is zero. The divisor $B_{h_W}$ is effective since $\Delta_{S_W}$ is, and therefore, there exists an effective $\Q$-divisor $E$ on $Y_2$ having no common components with $\Sigma_{f_2,0}$ such that $B_{h_W}=\Sigma_{f_2,0}+E$, which together with \eqref{eq:BandM3} yields
\begin{equation}\label{eq:ef1}
B_{h_2}=\Sigma_{f_2,0}+E.
\end{equation}

\medskip

\emph{Step 5.}
We claim that $E=0$. To this end, let $P$ be an irreducible component of $E$. If $P\subseteq\Exc(\tau_2)$, then $P\subseteq \Sigma_{f_2,0}$ by the construction of $\tau_2$ in Step 2, a contradiction.

Now, assume that $P\not\subseteq\Exc(\tau_2)$. Then $\tau_2$ is an isomorphism at the generic point of $\tau_2(P)$, hence in the neighbourhood of this generic point we have $\tau_2(\Supp B_{h_2})=\Supp B_{h_1}$. Therefore, since $P$ is a component of $B_{h_2}$, we obtain $\tau_2(P)\subseteq\Supp B_{h_1}$, hence $\tau_2(P)\subseteq \Sigma_{f_1,0}$ by the choice of $\Sigma_{f_1,0}$ in Step 1. But then $P\subseteq\tau_2^{-1}(\Sigma_{f_1,0})=\Sigma_{f_2,0}$, a contradiction.

\medskip

\emph{Step 6.}
We now have $B_{h_2}=\Sigma_{f_2,0}$ by \eqref{eq:ef1}, and thus
\begin{equation}\label{eq:M}
M_{h_2}=M_{f_2}
\end{equation}
by \eqref{eq:compare}. Since $\Sigma_{f_2,0}$ has simple normal crossings support, the variety $Y_2$ is an Ambro model for $h_2$ by the proof of \cite[Theorem 2.7]{Amb04}. Since the map $\rho|_{S_2}\colon (S_2,\Delta_{S_2})\dashrightarrow (S_W,\Delta_{S_W})$ is crepant birational over $Y_2$, by Remark \ref{bbir} the variety $Y_2$ is an also an Ambro model for $h_W$.

By Theorem \ref{ambro1}, there exists a diagram
$$
\xymatrix{
(S_W,\Delta_{S_W})\ar[d]_{h_W} && (S_W',\Delta_{S_W'})\ar[d]^{h_W'}\\
Y_2 & W \ar[l]^{\tau_W}\ar[r]_{\tau_W'}&Y_2'
}
$$
where $\tau_W\colon W\rightarrow Y_2$ is generically finite, $\tau_W' \colon W\rightarrow Y_2'$ is surjective, $Y_2'$ is an Ambro model for the klt-trivial fibration $h_W'\colon(S_W',\Delta_{S_W'})\to Y_2'$, the moduli divisor $M_{h_W'}$ is big and
\begin{equation}\label{eq:tautau}
\tau_W^*M_{h_W}=\tau_W'^*M_{h_W'}.
\end{equation}
Therefore, $M_{h_W'}$ is semiample by the assumptions of the theorem: indeed, if $M_{h_W}$ is big, we may assume that $\tau_W$ and $\tau_W'$ are isomorphisms and that $(S_W,\Delta_{S_W})=(S_W',\Delta_{S_W'})$. Otherwise, we have $\dim Y_2'<\dim Y_2$. By induction on the dimension, Conjecture \ref{con:weakbsemi} in dimensions at most $\dim Y_2'$ implies the B-Semiampleness Conjecture in dimension $\dim Y_2'$, and this then yields that $M_{h_W'}$ is semiample.

Now we have that $M_{h_W}$ is semiample by \eqref{eq:tautau} and by Lemma \ref{geq2}. Thus, $M_{h_2}$ is semiample by \eqref{eq:BandM3}, and so $M_{f_2}$ is semiample by \eqref{eq:M}. By \cite[Proposition 3.1]{Amb05a} we have $M_{f_2}=(\tau_1\circ\tau_2)^*M_f$, and finally, $M_f$ is semiample again by Lemma \ref{geq2}. This finishes the proof.
\end{proof}

\bibliographymark{References}

\providecommand{\bysame}{\leavevmode\hbox to3em{\hrulefill}\thinspace}
\providecommand{\arXiv}[2][]{\href{https://arxiv.org/abs/#2}{arXiv:#1#2}}
\providecommand{\MR}{\relax\ifhmode\unskip\space\fi MR }
% \MRhref is called by the amsart/book/proc definition of \MR.
\providecommand{\MRhref}[2]{%
  \href{http://www.ams.org/mathscinet-getitem?mr=#1}{#2}
}
\providecommand{\href}[2]{#2}

\end{document}